\documentclass[11pt]{amsart}


\usepackage[top=27 mm, bottom=20 mm, left=25 mm, right=25mm]{geometry}

\usepackage[all]{xy}
\usepackage{mathrsfs}
\usepackage{amsmath,amsthm, amssymb, amsgen,amsxtra,amsfonts, amsbsy} 
\usepackage[all]{xy}
\usepackage{verbatim}

\usepackage{graphicx}
\usepackage{caption,rotating}
\usepackage{color}
\usepackage{subcaption}

\usepackage{tikz}
\usetikzlibrary{trees}
\usetikzlibrary{decorations.pathmorphing}
\usetikzlibrary{decorations.markings}

\usepackage{multirow}

\usepackage{times}

\usepackage[font=small,labelfont=small]{caption}

\usepackage{enumitem}

\usepackage{mathtools,stackengine}

\usepackage{pdflscape}
\usepackage{array}

\usepackage{arydshln}

\usepackage{float}
\restylefloat{table}

%
%
%
\theoremstyle{definition}
\newtheorem{Definition}{Definition}[section]

\newtheorem{Example}[Definition]{Example}

\newtheorem{Remark}[Definition]{Remark}
\newtheorem{Remark/Notation}[Definition]{Remark/Notation}
\newtheorem{Notation}[Definition]{Notation}

\newtheorem*{Acknowledgements}{Acknowledgements}

\theoremstyle{plain}
\newtheorem{Theorem}[Definition]{Theorem}

\newtheorem*{Main Theoremx}{Main Theorem}
\newtheorem{Proposition}[Definition]{Proposition}

\newtheorem{Lemma}[Definition]{Lemma}
\newtheorem{Corollary}[Definition]{Corollary}

\newtheoremstyle{voiditstyle}{3pt}{3pt}{\itshape}{\parindent}%
{\bfseries}{.}{ }{\thmnote{#3}}%
\theoremstyle{voiditstyle}

\newtheoremstyle{voidromstyle}{3pt}{3pt}{\rm}{\parindent}%
{\bfseries}{.}{ }{\thmnote{#3}}%
\theoremstyle{voidromstyle}

%

\newcommand{\cal}{\mathcal}

\numberwithin{equation}{section}

\newcommand{\bbP}{{\mathbb{P}}}
\newcommand{\bbG}{{\mathbb{G}}}

\newcommand{\GL}{\operatorname{GL}}

\newcommand{\PGL}{\operatorname{PGL}}
\newcommand{\Aut}{\operatorname{Aut}}

\newcommand{\Spec}{\operatorname{Spec}}

\newcommand{\Pic}{\operatorname{Pic}}

\newcommand{\Stab}{\operatorname{Stab}}

\newcommand{\bsm}{\left(\begin{smallmatrix}}
\newcommand{\esm}{\end{smallmatrix}\right)}

\newcommand{\beq}{\begin{equation}}
\newcommand{\eeq}{\end{equation}}
\usepackage[
           colorlinks=true,
            urlcolor=blue,       
            filecolor=blue,     
            linkcolor=blue,       
            citecolor=blue,         
            pdftitle= {Title},
            pdfauthor={Author},
            pdfsubject={Subject},
            pdfkeywords={Key}
            pagebackref,
            bookmarks=false,
            bookmarksopen=true]
            {hyperref}

 
 
 
 

\makeatletter
\def\@tocline#1#2#3#4#5#6#7{\relax
  \ifnum #1>\c@tocdepth 
  \else
    \par \addpenalty\@secpenalty\addvspace{#2}%
    \begingroup \hyphenpenalty\@M
    \@ifempty{#4}{%
      \@tempdima\csname r@tocindent\number#1\endcsname\relax
    }{%
      \@tempdima#4\relax
    }%
    \parindent\z@ \leftskip#3\relax \advance\leftskip\@tempdima\relax
    \rightskip\@pnumwidth plus4em \parfillskip-\@pnumwidth
    #5\leavevmode\hskip-\@tempdima
      \ifcase #1
       \or\or \hskip 2em \or \hskip 2em \else \hskip 3em \fi%
      #6\nobreak\relax
    \dotfill\hbox to\@pnumwidth{\@tocpagenum{#7}}\par
    \nobreak
    \endgroup
  \fi}
\makeatother

\begin{document}

\title[RDP del Pezzo surfaces with global vector fields in odd characteristic]{RDP del Pezzo surfaces with global vector fields\\ in odd characteristic}

\author{Gebhard Martin}
\address{Mathematisches Institut der Universit\"at Bonn, Endenicher Allee 60, 53115 Bonn, Germany}
\curraddr{}
\email{gmartin@math.uni-bonn.de}

\author{Claudia Stadlmayr}
\address{TU M\"unchen, Zentrum Mathematik - M11, Boltzmannstra{\ss}e 3, 85748 Garching bei M\"unchen, Germany}
\curraddr{}
\email{claudia.stadlmayr@ma.tum.de}

\date{\today}
\subjclass[2020]{14E07, 14J26, 14J50, 14L15, 14J17, 14B05, 14G17}

\maketitle

\begin{abstract}
We classify RDP del Pezzo surfaces with global vector fields over arbitrary algebraically closed fields of characteristic $p \neq 2$. In characteristic $0$, every RDP del Pezzo surface $X$ is \emph{equivariant}, that is, $\Aut_X = \Aut_{\widetilde{X}}$, where $\widetilde{X}$ is the minimal resolution of $X$, hence the classification of RDP del Pezzo surfaces with global vector fields is equivalent to the classification of weak del Pezzo surfaces with global vector fields.

In this article, we show that if $p \neq 2,3,5,7$, then it is still true that every RDP del Pezzo surface is equivariant. We classify the non-equivariant RDP del Pezzo surfaces in characteristic $p = 3,5,7$, giving explicit equations for every such RDP del Pezzo surface in all possible degrees.
As an application, we construct regular non-smooth RDP del Pezzo surfaces over imperfect fields of characteristic $7$, thereby showing that the known bound $p \leq 7$ for the characteristics, where such a surface can exist, is sharp.
\end{abstract}

\tableofcontents

\vspace{-1cm}
\section{Introduction}
We are working over an algebraically closed field $k$ of characteristic $p \geq 0$. Let $X$ be a del Pezzo surface with at worst rational double points and let $\pi: \widetilde{X} \to X$ be its minimal resolution, so that, by definition, $\widetilde{X}$ is a weak del Pezzo surface. 
Since $-K_X$ is ample, $\Aut_X$ is an affine group scheme of finite type, hence its group of automorphisms $\Aut(X):= \Aut_X(k)$ is infinite if and only if the automorphism scheme $\Aut_X$ is positive-dimensional. Moreover, by Blanchard's Lemma \cite[Theorem 7.2.1]{Brion1}, and since $X$ is the anti-canonical model of $\widetilde{X}$, there is a closed immersion of group schemes
$\pi_*: \Aut_{\widetilde{X}} \hookrightarrow  \Aut_X$. We call $X$ \emph{equivariant}, if $\pi_*$ is an isomorphism. Summarizing, for all characteristics, there is the following chain of implications:
\begin{equation}\label{eq: equivalence infinite autogroup}
|\Aut(X)| = |\Aut(\widetilde{X})| = \infty \hspace{2mm} \Longrightarrow \hspace{2mm} H^0(\widetilde{X},T_{\widetilde{X}}) \neq 0 \hspace{2mm} \Longrightarrow \hspace{2mm} H^0(X,T_X) \neq 0
\end{equation}

Over the complex numbers, every RDP del Pezzo surface $X$ is equivariant, so in particular we have $H^0(X,T_X) = H^0(\widetilde{X},T_{\widetilde{X}})$, and by Cartier's theorem $\Aut_X^0$ is smooth, hence it is positive-dimensional if and only if $H^0(X,T_X) \neq 0$. In other words, in characteristic $0$, all implications in \eqref{eq: equivalence infinite autogroup} are in fact equivalences. 

In our recent article \cite{MartinStadlmayr}, we obtained the classification of weak del Pezzo surfaces with global vector fields over algebraically closed fields of arbitrary characteristic (over the complex numbers, an independent proof was given by Cheltsov and Prokhorov in \cite{CheltsovProkhorov}). By \eqref{eq: equivalence infinite autogroup}, this includes the classification of all RDP del Pezzo surfaces with infinite automorphism group, but we note that if $p = 2,3$, there are RDP del Pezzo surfaces with finite automorphism group whose minimal resolution has global vector fields, so the first implication in \eqref{eq: equivalence infinite autogroup} is not an equivalence precisely if $p = 2,3$. In other words, we have
\begin{equation}\label{eq: equivalence infinite autogroup2}
|\Aut(X)| = |\Aut(\widetilde{X})| = \infty \hspace{2mm} \xRightarrow{\stackrel{\hspace{1mm} p \neq 2,3 \hspace{1mm}}{\Longleftarrow}} \hspace{2mm} H^0(\widetilde{X},T_{\widetilde{X}}) \neq 0 \hspace{2mm} \xRightarrow{\hspace{5mm}} \hspace{2mm} H^0(X,T_X) \neq 0
\end{equation}
and a classification of $\widetilde{X}$ with $H^0(\widetilde{X},T_{\widetilde{X}}) \neq 0$ in all characteristics.

Now, the missing piece is a classification of RDP del Pezzo surfaces $X$ with $H^0(X,T_X) \neq 0$.
As a first step, we extend Hirokado's \cite{Hirokado} results on the liftability of vector fields to resolutions of rational double points to group scheme actions on RDP del Pezzo surfaces as follows: 
\begin{Theorem}[= Theorem \ref{thm: equivariantmain}] \label{thm: equivariantmainintroduction}
Let $X$ be an RDP del Pezzo surface and let $\pi: \widetilde{X} \to X$ be its minimal resolution. Assume that one of the following conditions holds:
\begin{enumerate}
    \item 
    $p \not \in \{ 2,3,5,7\}$,
    \item
    $p = 7$ and $X$ does not contain an RDP of type $A_6$.
    \item
    $p = 5$ and $X$ does not contain an RDP of type $A_4$ or $E_8^0$.
    \item
    $p=3$ and $X$ does not contain an RDP of type $A_2, A_5, A_8, E_6^0, E_6^1, E_7^0, E_8^0$ or $E_8^1$.
    \item
    $p = 2$ and $X$ does not contain an RDP of type $A_1,A_3,A_5,A_7,D_n^r,E_6^0,E_7^0,E_7^1,E_7^2,E_7^3,E_8^0,E_8^1,E_8^2$ or $E_8^3$.
\end{enumerate}
Then, $\Aut_X = \Aut_{\widetilde{X}}$, and thus, in particular, $H^0(\widetilde{X},T_{\widetilde{X}}) = H^0(X,T_X)$. Therefore, $H^0(X,T_X) \neq 0$ if and only if $X$ is the anti-canonical model of one of the surfaces in the classification tables of \cite{MartinStadlmayr}.
\end{Theorem}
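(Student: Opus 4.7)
The plan is to verify that the closed immersion $\pi_*: \Aut_{\widetilde{X}} \hookrightarrow \Aut_X$ produced by Blanchard's lemma is an isomorphism, which amounts to checking surjectivity of the map of functors, i.e.\ that every $R$-automorphism of $X$ lifts to an $R$-automorphism of $\widetilde{X}$ for each local Artinian $k$-algebra $R$. On $k$-points this is immediate: any $\sigma \in \Aut(X)$ permutes the RDPs and lifts uniquely to an automorphism $\widetilde{\sigma}$ of $\widetilde{X}$ by functoriality of the minimal resolution. After twisting by $\widetilde{\sigma}$, it remains to lift elements of $\ker(\Aut_X(R) \to \Aut_X(k))$, i.e.\ formal $R$-deformations of $\id_X$, to analogous deformations of $\id_{\widetilde{X}}$. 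Since $\pi$ is an isomorphism away from the finitely many RDPs of $X$, this is a purely local problem at each singular point $p \in X$: one must produce a lift along the formal neighborhood $\Spec \widehat{\mathcal{O}}_{X,p} \to \widetilde{X}$, and the local lifts then glue to the sought global element of $\Aut_{\widetilde{X}}(R)$.

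For the first-order case $R = k[\epsilon]/(\epsilon^2)$, this local lifting is precisely the question of extending a derivation on an RDP singularity to its minimal resolution, which is the content of Hirokado's theorem: the relevant obstruction is a cohomology class supported on the exceptional divisor that vanishes exactly for the RDP types lying outside the list excluded in the statement. To pass from first order to arbitrary Artinian $R$, I would induct on the length of $R$. For a small surjection $R' \twoheadrightarrow R$ with square-zero kernel $I$, standard deformation theory identifies the obstruction to extending a given $R$-lift to an $R'$-lift with an element of the same local cohomology group (now tensored with $I$) that already appears in the first-order case. The vanishing established by Hirokado then propagates through every step of the tower, and the lift extends across all Artinian thickenings.

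The principal technical obstacle is to confirm that Hirokado's first-order vanishing does indeed control the entire obstruction tower, i.e.\ that liftability of derivations forces liftability of actions by every infinitesimal group scheme $G$ --- for instance $\mu_{p^n}$, $\alpha_{p^n}$, and their successive thickenings --- not only of actions by $\Spec k[\epsilon]/(\epsilon^2)$. Settling this requires, for each RDP type in the permitted list for characteristics $p = 3, 5, 7$ and for every RDP type when $p \notin \{2,3,5,7\}$, an inspection of the full local cohomology group along $\pi^{-1}(p)$ and not merely of its first-order piece. Once this is accomplished, $\pi_*$ is surjective on all functors of points, so $\Aut_X = \Aut_{\widetilde{X}}$, and the final sentence of the theorem follows by combining this equality with the classification of weak del Pezzo surfaces with global vector fields obtained in \cite{MartinStadlmayr}.
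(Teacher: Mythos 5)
Your proposal correctly reduces the problem to lifting infinitesimal automorphisms across the finitely many singular points, and you have put your finger on exactly the right difficulty --- but you leave it unresolved, and it is a genuine gap rather than a routine verification. Hirokado's theorem controls only the first-order piece, i.e.\ the map $\pi_* T_{\widetilde{X}} \to T_X$; it says nothing about whether an action of $\mu_{p^n}$, $\alpha_{p^n}$, or a longer Artinian thickening lifts. Moreover, the obstruction at a small extension $R' \twoheadrightarrow R$ does \emph{not} simply live in ``the same local cohomology group'' as the first-order case: besides matching tangent vectors, one must extend the already-constructed lift over $\widetilde{X}$ across $R'$, and the relevant obstruction involves $R^1\pi_* T_{\widetilde{X}}$, which need not vanish for these resolutions. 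Your final paragraph concedes that this step ``requires an inspection of the full local cohomology group,'' which is precisely the content that is missing from the argument.

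The paper avoids the obstruction tower entirely by a global argument: it combines $T_X$-equivariance of the resolution (Hirokado, via Propositions \ref{prop: Hirokados RDPs with non lift vector fields} and \ref{prop: Wahl Hirokado}) with the \emph{smoothness} of $\Aut_{\widetilde{X}}^0$, which is known from the classification in \cite{MartinStadlmayr}. Indeed, $T_X$-equivariance gives $h^0(\widetilde{X}, T_{\widetilde{X}}) = h^0(X, T_X)$, and then the chain $\dim \Aut_{\widetilde{X}}^0 \leq \dim \Aut_X^0 \leq h^0(X,T_X) = h^0(\widetilde{X},T_{\widetilde{X}}) = \dim \Aut_{\widetilde{X}}^0$ forces all inequalities to be equalities, so $\Aut_X^0 = \Aut_{\widetilde{X}}^0$ (Proposition \ref{prop: T-equivariance implies Aut-equivariance}); equality of the full automorphism schemes then follows since they have the same $k$-points. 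This leaves exactly the three cases in which $\Aut_{\widetilde{X}}^0$ fails to be smooth after the excluded RDPs are removed ($p=3$, degree $2$ with $\Gamma = A_6$ or $D_6$, and $p=2$, degree $3$ with $\Gamma = A_4$), which the paper settles by explicit equations. Your proposal would need either to carry out the full local obstruction analysis you describe, or to adopt this dimension-count shortcut --- and in the latter case it would still owe a separate treatment of the three non-smooth exceptional cases, which it currently does not mention.
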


Thus, in order to classify RDP del Pezzo surfaces $X$ with global vector fields, we may restrict our attention to RDP del Pezzo surfaces containing a configuration $\Gamma$ of RDPs excluded in Theorem \ref{thm: equivariantmain}. In Theorem \ref{thm: lattice theoretic reduction criterion}, we give a criterion for $X$ to be the blow-up of an RDP del Pezzo surface of higher degree with the same configuration $\Gamma$.
In the language of the Minimal Model Program, this means that we give a sufficient criterion for the existence of a $K_{\widetilde{X}}$-negative extremal ray on $\widetilde{X}$ which lies in the orthogonal complement of the exceptional locus over $\Gamma$.
Using Blanchard's Lemma, this allows us to set up an inductive argument for the classification of non-equivariant RDP del Pezzo surfaces $X$ with RDP configuration $\Gamma$. This strategy will be carried out in Sections \ref{sec: char7}, \ref{sec: char5}, and \ref{sec: char3}, for characteristic $p = 7$, $p = 5$, and $p = 3$, respectively. The following theorem is obtained by combining Theorems \ref{thm: mainchar7}, \ref{thm: mainchar5}, and \ref{thm: mainchar3}.

\begin{Theorem} \label{thm: nonequivariantmain}
Let $X$ be an RDP del Pezzo surface and let $\pi: \widetilde{X} \to X$ be its minimal resolution. Assume that $H^0(X,T_X) \neq 0$.
Then, the following hold:
\begin{enumerate}
    \item If $p = 7$ and $X$ contains an RDP of type $A_6$, then $X$ is one of the $2$ surfaces in Table \ref{Table Eqn and Aut - char 7}.
    \item If $p = 5$ and $X$ contains an RDP of type $A_4$ or $E_8^0$, then $X$ is one of the $9$ surfaces in Table \ref{Table Eqn and Aut - char 5}.
    \item If $p = 3$ and $X$ contains an RDP of type $A_2, A_5, A_8, E_6^0, E_6^1, E_7^0, E_8^0$ or $E_8^1$, then $X$ is a member of one of the $56$ families of surfaces in Tables \ref{Table Eqn and Aut - char 3, deg at least 4}, \ref{Table Eqn and Aut - char 3, deg 3}, \ref{Table Eqn and Aut - char 3, deg 2}, and \ref{Table Eqn and Aut - char 3, deg 1}.
\end{enumerate}
For each of these surfaces, we have $h^0(X,T_X) > h^0(\widetilde{X},T_{\widetilde{X}})$ and in particular $\Aut_X \neq \Aut_{\widetilde{X}}$.
\end{Theorem}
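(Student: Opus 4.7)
The plan is to argue by induction on the anticanonical degree $d = K_X^2$, using the reduction criterion of Theorem \ref{thm: lattice theoretic reduction criterion} as the inductive step. Fix $p \in \{3,5,7\}$, let $\Gamma$ denote a forbidden configuration of RDPs from Theorem \ref{thm: equivariantmainintroduction}, and assume $X$ contains $\Gamma$ and has $H^0(X,T_X) \neq 0$. The base case $d = 9$ corresponds to $X = \bbP^2$, which contains no singularities and so is irrelevant; the inductive hypothesis is that the theorem holds for all RDP del Pezzo surfaces of strictly higher degree.

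For the inductive step, I would first apply Theorem \ref{thm: lattice theoretic reduction criterion} to $\widetilde X$: whenever its hypotheses are met, $\widetilde X$ admits a $K_{\widetilde X}$-negative extremal ray in the orthogonal complement of the exceptional locus over $\Gamma$, whose contraction produces a morphism $\widetilde X \to \widetilde X'$ to a weak del Pezzo surface of degree $d+1$ whose anticanonical model $X'$ still contains $\Gamma$. This descends via the anticanonical map to a morphism $X \to X'$ of RDP del Pezzo surfaces; Blanchard's Lemma transports the vector field on $X$ to one on $X'$, so by induction $X'$ appears in the relevant table and $X$ is recovered as a blow-up of $X'$ at a point fixed by the infinitesimal action. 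This reduces the problem to a finite list of minimal configuration-and-degree pairs $(\Gamma, d)$ at which the lattice criterion fails. For each such minimal pair, I would enumerate the possible Dynkin sublattices realising $\Gamma$ in $K_{\widetilde X}^\perp \subset \NS(\widetilde X)$, fix an anticanonical or Gorenstein weighted model of $X$, and impose the linear condition that $X$ carry a global vector field. Normalising modulo the residual $\PGL$-action then extracts the explicit equations listed in Tables \ref{Table Eqn and Aut - char 7}, \ref{Table Eqn and Aut - char 5}, and \ref{Table Eqn and Aut - char 3, deg at least 4}--\ref{Table Eqn and Aut - char 3, deg 1}. The strict inequality $h^0(X,T_X) > h^0(\widetilde X, T_{\widetilde X})$, and hence $\Aut_X \neq \Aut_{\widetilde X}$, would be checked for each candidate by computing both sides directly, e.g.\ via the edge map of the Leray spectral sequence for $\pi$ together with the local vector field contributions at the bad RDPs (whose non-liftability is precisely why those RDPs appear on the forbidden list).

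The principal obstacle I expect is characteristic $3$. The eight forbidden RDP types range from $A_2$ to $E_8^1$, producing many more minimal pairs $(\Gamma, d)$ than in characteristics $5$ or $7$, and hence the $56$ families rather than $2$ or $9$. Particular care is needed for the low-degree cases with configurations such as $E_8^0, E_8^1, E_7^0$ at $d \in \{1,2\}$, where the orthogonal complement of the exceptional divisor in $\NS(\widetilde X)$ is too small for the reduction criterion to bite, forcing the analysis of explicit Weierstrass-type normal forms together with a case-by-case study of their vector fields. This configuration-by-configuration bookkeeping is what drives the length of Sections \ref{sec: char7}, \ref{sec: char5}, and \ref{sec: char3}.
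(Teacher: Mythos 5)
Your skeleton coincides with the paper's: enumerate for each degree the RDP configurations containing a non-equivariant singularity via Dynkin's tables of root sublattices of $E_{9-d}$, then descend in degree using Theorem \ref{thm: lattice theoretic reduction criterion} to realize $X$ as the anti-canonical model of a blow-up of an already-classified surface of degree $d+1$, computing $\Aut_X^0$ along the way with Propositions \ref{prop: StabiLemma}, \ref{prop: ShortExactAutSequences} and \ref{prop: fitting argument A_n}. Two points in your plan need repair before it would execute. First, the inductive step is not available merely ``whenever the hypotheses are met'' in the abstract: Theorem \ref{thm: lattice theoretic reduction criterion} concerns the particular embedding $\Lambda'\hookrightarrow E_{9-d}$ realized by the $(-2)$-curves, and for several configurations in degrees $4$, $2$ and $1$ this embedding is not unique up to $\mathrm{O}(E_{9-d})$ (the exceptions listed in Corollary \ref{cor: lattice embeddings Dynkin Martinet - strategy proof}), so abstract existence of a factorization through $E_{8-d}$ does not imply the geometric one factors; these cases need separate treatment. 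Second, for $p=3$ in degrees $2$ and $1$ --- which account for $40$ of the $56$ families --- the paper abandons the induction altogether and instead classifies the branch quartics in $\mathbb{P}^2$ (resp.\ sextics in $\mathbb{P}(1,1,2)$) invariant under each of the finitely many conjugacy classes of $\mu_3$- and $\alpha_3$-actions; your fallback to ``Weierstrass-type normal forms where the criterion fails to bite'' points in this direction but substantially underestimates how much of the classification is carried by that direct method rather than by the induction. Smaller caveats: the induced map $X\dashrightarrow X'$ is only birational (an isomorphism near $\Gamma$), not a morphism; the blown-up point need only have non-trivial stabilizer in $\Aut_{X'}^0$, not be a fixed point; and passing from the blow-up $Y$ to its anti-canonical model $X$ creates new singularities (possibly $A_{p-1}$, e.g.\ $A_2$ when $p=3$) for which Proposition \ref{prop: fitting argument A_n} only bounds the index of $\Aut_Y^0$ in $\Aut_X^0$ rather than forcing equality --- this is why the paper certifies the strict inequality $h^0(X,T_X)>h^0(\widetilde X,T_{\widetilde X})$ by exhibiting explicit invariant group scheme actions that move a singular point, rather than by the Leray-type count you propose.
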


\begin{Remark}
The reason why we do not treat the case $p = 2$ is due to the sheer amount of RDP del Pezzo surfaces with global vector fields in characteristic $2$.
Indeed, by Theorem \ref{thm: equivariantmain}, it is unclear whether $\Aut_{\widetilde{X}} = \Aut_X$ as soon as $X$ has a single node and in fact, even for the quadratic cone $\{x_0^2 - x_1x_2=0\} \subseteq \mathbb{P}^3$ in characteristic $2$ it is not true that every vector field lifts to its minimal resolution; consider for example $x_3\partial_{x_0}$. However, in principle, our approach would also work if $p = 2$.
\end{Remark}

Comparing Tables \ref{Table Eqn and Aut - char 7}, \ref{Table Eqn and Aut - char 5}, \ref{Table Eqn and Aut - char 3, deg at least 4}, \ref{Table Eqn and Aut - char 3, deg 3}, \ref{Table Eqn and Aut - char 3, deg 2}, and \ref{Table Eqn and Aut - char 3, deg 1} with the classification in \cite{MartinStadlmayr}, we see that in characteristics $p = 3,5$ and $7$, there exists an RDP del Pezzo surface $X$ with $H^0(X,T_X) \neq 0$ whose minimal resolution admits no non-trivial global vector fields. In other words, we have the following picture, where the implications from right to left hold \emph{only} in the indicated characterstics:
 \begin{equation}\label{eq: equivalence infinite autogroup3}
|\Aut(X)| = |\Aut(\widetilde{X})| = \infty \hspace{2mm} \xRightarrow{\stackrel{\hspace{1mm} p \neq 2,3 \hspace{1mm}}{\Longleftarrow}} \hspace{2mm} H^0(\widetilde{X},T_{\widetilde{X}}) \neq 0 \hspace{2mm} \xRightarrow{\stackrel{\hspace{1mm}p \neq 2,3,5,7 \hspace{1mm}}{\Longleftarrow}} \hspace{2mm}H^0(X,T_X) \neq 0.  
\end{equation}

\begin{Acknowledgements}
Research of the first named author was supported by the DFG Research Grant MA 8510/1-1.
The second named author gratefully acknowledges funding by the DFG Sachbeihilfe LI 1906/5-1 ``Geometrie von rationalen Doppelpunkten'' and support by the doctoral program TopMath and the TUM Graduate School.
\end{Acknowledgements}

\section{An application: Regular inseparable twists of RDP del Pezzo surfaces}
 A \emph{twisted form} of a $k$-scheme $X$ over a field extension $L \supseteq k$ is a scheme $Y$ over $L$ such that $Y_{\bar{L}} \cong X_{\bar{L}}$, where $\bar{L}$ is an algebraic closure of $L$.
If $X$ is a proper scheme over  $k$, then smoothness of $\Aut_X$ is intimately related with properties of twisted forms of $X$, as the following proposition shows. Even though this proposition should be well-known, we include the proof for the convenience of the reader.

\begin{Proposition} \label{prop: twists}
Let $k \subseteq L$ be a field extension.
Let $X$ be a proper scheme over $k$ and let $Y$ be a twisted form of $X$ over $L$. Assume that $\Aut_X$ is smooth. Then, the following hold: 
\begin{enumerate}
\item If $L$ is separably closed, then $Y \cong X_L$.
\item If $\mathcal{P}$ is a property of schemes that is stable under field extensions and local in the \'etale topology, then, if $X$ satisfies $\mathcal{P}$, also $Y$ satisfies $\mathcal{P}$.
\end{enumerate}
\end{Proposition}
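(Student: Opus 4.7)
My plan is to approach this via non-abelian descent. I would express the set of twisted forms of $X$ over $L$ as the pointed set $H^1_{\mathrm{fppf}}(L, \Aut_{X_L})$ by means of the Isom-torsor $T := \underline{\operatorname{Isom}}_L(X_L, Y)$, equipped with its natural $\Aut_{X_L}$-action. By hypothesis $T(\bar L) \neq \emptyset$, so $T$ is a twisted form of $\Aut_{X_L}$, which together with properness of $X$ and $Y$ ensures that $T$ is representable by an $L$-scheme. Invoking smoothness of $\Aut_X$ (and hence of $\Aut_{X_L}$), smooth descent identifies $H^1_{\mathrm{fppf}}(L, \Aut_{X_L})$ with $H^1_{\mathrm{\acute{e}t}}(L, \Aut_{X_L})$, so the problem reduces to checking triviality of \'etale torsors under $\Aut_{X_L}$.

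For part (1), if $L$ is separably closed then every \'etale cover of $\Spec(L)$ splits, and so every \'etale torsor over $\Spec(L)$ admits a section. Applied to $T$, this furnishes an $L$-point of $T$, that is, an $L$-isomorphism $Y \cong X_L$.

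For part (2), I would descend the $L^{\mathrm{sep}}$-isomorphism provided by part (1) to a finite separable extension. Since $X_L$ and $Y$ are of finite type over $L$, the isomorphism $Y_{L^{\mathrm{sep}}} \cong X_{L^{\mathrm{sep}}}$ is already defined over some finitely generated sub-extension of $L^{\mathrm{sep}}/L$, hence over a finite separable extension $L'/L$. Then $\Spec(L') \to \Spec(L)$ is an \'etale cover and $Y_{L'} \cong X_{L'}$. Stability of $\mathcal{P}$ under the field extension $k \subseteq L'$ yields that $X_{L'}$, and therefore $Y_{L'}$, satisfies $\mathcal{P}$; the \'etale-locality of $\mathcal{P}$ then transfers the property from $Y_{L'}$ down to $Y$.

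The main technical points I expect to have to address are the representability of the Isom scheme $T$ and the use of smooth descent to pass from fppf to \'etale cohomology; both are standard but must be cited carefully. Once these are in place, everything reduces to the basic fact that smooth group schemes over a separably closed field carry only trivial torsors, together with the standard spreading-out of an $L^{\mathrm{sep}}$-isomorphism between finite-type schemes to a finite subextension.
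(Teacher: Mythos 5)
Your proposal is correct and follows essentially the same route as the paper: both identify the twisted form with a class in $\check{H}^1_{\mathrm{fppf}}(\Spec L,\Aut_{X_L})$ (you via the Isom-torsor, the paper via the Čech cocycle attached to $\bar\varphi$), use smoothness of $\Aut_X$ to identify fppf with étale cohomology, and conclude triviality over a separably closed field; part (2) is handled identically by spreading the isomorphism out to a finite separable subextension and invoking stability and étale-locality of $\mathcal{P}$. The only cosmetic difference is your torsor-theoretic packaging, which makes the representability of $\underline{\operatorname{Isom}}_L(X_L,Y)$ an explicit technical point that the paper's direct cocycle formulation sidesteps.
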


\begin{proof}
Let us first prove Claim (1). As explained for example in \cite[p.134]{Milne}, an isomorphism $\bar{\varphi}: Y_{\bar{L}} \cong X_{\bar{L}}$ gives rise to a \v{C}ech cocycle on the fppf site of $\Spec L$, hence to an element in $\check{H}^1_{{\rm fppf}}(\Spec L,\Aut_{X_L})$. 
By \cite[Chapter III: Theorem 4.3.(b), Corollary 4.7]{Milne}, the smoothness of $\Aut_X$ implies that $$\check{H}^1_{{\rm fppf}}(\Spec L,\Aut_{X_L}) = \check{H}^1_{{\rm \acute{e}t}}(\Spec L,\Aut_{X_L})$$ and since $L$ is separably closed, the latter is trivial. Hence, $Y$ and $X_L$ are already isomorphic over $L$.

For Claim (2), let $L^{sep}$ be the separable closure of $L$ in $\bar{L}$. By (1), we have $X_{L^{sep}} \cong Y_{L^{sep}}$. Since $X$ is proper, this isomorphism is defined over a finite subextension $L \subseteq L' \subseteq L^{sep}$, so that $X_{L'} \cong Y_{L'}$. The morphism $\Spec L' \to \Spec L$ is finite and \'etale. Hence, by our assumptions on $\mathcal{P}$, if $X$ satisfies $\mathcal{P}$, then $X_{L'}$ satisfies $\mathcal{P}$, and thus also $Y$ satisfies $\mathcal{P}$.
\end{proof}

Choosing for $\mathcal{P}$ the property that the singular locus is non-empty and specializing to the case where $X$ is an RDP del Pezzo surface, we obtain the following:

\begin{Corollary}
Let $X$ be an RDP del Pezzo surface over $k$. Let $k \subseteq L$ be a field extension and let $Y$ be a twisted form of $X$ over $L$. If $X$ has at least one singular point and $Y$ is regular, then $\Aut_X$ is non-smooth.
\end{Corollary}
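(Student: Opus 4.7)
The plan is to argue by contrapositive: assume $\Aut_X$ is smooth, and derive a contradiction with the hypothesis that $Y$ is regular. The natural tool is Proposition~\ref{prop: twists}(2) applied to the property $\mathcal{P}$ singled out in the paragraph just before the corollary, namely ``the singular locus is non-empty'', equivalently ``the scheme has at least one non-regular point''. Once $\mathcal{P}$ is checked to satisfy the two hypotheses of the proposition, the conclusion is immediate: $X$ satisfies $\mathcal{P}$ by assumption, hence so does $Y$, contradicting the regularity of $Y$, so $\Aut_X$ cannot be smooth.

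The real work is verifying the two hypotheses on $\mathcal{P}$. For stability under an arbitrary field extension $k \subseteq F$, I would take a non-regular point $z$ of a locally Noetherian $k$-scheme $Z$ and any point $z' \in Z_F$ lying over $z$. The induced stalk map $\mathcal{O}_{Z,z} \to \mathcal{O}_{Z_F,z'}$ is flat (as a base change of the flat map $k \to F$) and local, hence faithfully flat (any flat local homomorphism of local rings is faithfully flat, since $\mathfrak{m}_z \mathcal{O}_{Z_F,z'} \subseteq \mathfrak{m}_{z'} \neq \mathcal{O}_{Z_F,z'}$). By the standard fact that regularity of Noetherian local rings descends along faithfully flat local homomorphisms, non-regularity of $\mathcal{O}_{Z,z}$ forces non-regularity of $\mathcal{O}_{Z_F,z'}$, and so $Z_F$ has non-empty singular locus. \'Etale-locality of $\mathcal{P}$ is classical: regularity of Noetherian local rings is preserved and detected by \'etale morphisms, so ``having a non-regular point'' is an \'etale-local property. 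With both hypotheses in place, Proposition~\ref{prop: twists}(2) applied to the singular $X$ produces a non-regular point on $Y$, contradicting regularity of $Y$, and the proof is complete.

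The only conceptual subtlety, rather than a genuine obstacle, is that one must phrase $\mathcal{P}$ in terms of \emph{regularity} rather than \emph{smoothness}: over the potentially imperfect field $L$ a regular scheme need not be smooth, and indeed the purpose of the corollary (and of the subsequent construction of regular non-smooth RDP del Pezzo surfaces) is precisely to exhibit such regular, non-smooth twists. If one tried to apply Proposition~\ref{prop: twists}(2) with ``non-smooth'' in place of ``non-regular'', the regularity of $Y$ would not suffice to produce a contradiction; the formulation in terms of non-empty singular locus, combined with the faithfully-flat descent of regularity, is what makes the argument go through cleanly.
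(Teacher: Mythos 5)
Your proof is correct and follows exactly the route the paper takes: the paper derives the corollary in one line by applying Proposition \ref{prop: twists}(2) with $\mathcal{P}$ the property that the singular locus is non-empty. Your verification that this property (read as ``has a non-regular point'') is stable under field extensions and \'etale-local, via faithfully flat descent of regularity, supplies exactly the details the paper leaves implicit, and your remark about regularity versus smoothness over imperfect fields is the right point to flag.
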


At a first glance, these twists seem rather hard to get a grip on geometrically, but it turns out that they can be written down explicitly if one has explicit descriptions of $X$ and $\Aut_X$. For example, consider $\mu_p \subseteq \PGL_{n,k}$ embedded diagonally with weights $(a_0,a_1,\hdots,a_{n-1},1)$.
We can write $\mu_p$ as the kernel of the surjective homomorphism
\begin{eqnarray*}
f: &  \mathbb{G}_m^n & \to \mathbb{G}_m^n \\
& (u_0,\hdots,u_n) & \mapsto  (u_0u_n^{-a_0},\hdots,u_{n-1}u_n^{-a_{n-1}},u_n^p).
\end{eqnarray*}
By \cite[Proposition 4.5]{Milne} and Hilbert $90$, for every field extension $k \subseteq L$, this yields a short exact sequence of abelian groups
$$
0 \to \mathbb{G}_m^n(L) \overset{f(L)}{\to} \mathbb{G}_m^n(L) \overset{d}{\to}  \check{H}^1_{{\rm fppf}}(\Spec L,\mu_p) \to 0.
$$
Here, for $g \in \mathbb{G}_m^n(L)$, the element $d(g)$ is defined by choosing $\bar{g} \in \mathbb{G}_m^n(\bar{L})$ such that $f(\bar{L})(\bar{g}) = g_{\bar L}$ and setting $d(g)$ to be the image of the cocycle $(\bar{g} \otimes 1)^{-1}(1 \otimes \bar{g}) \in \mu_p(\bar{L} \otimes_L \bar{L})$.
If $X \subseteq \mathbb{P}^n_{k}$ is a subvariety stabilized by $\mu_p$, then $\bar{g}^{-1}(X_{\bar{L}}) \subseteq \mathbb{P}^n_{\bar{L}}$ is defined over $L$ and the cocycle one associates to this twisted form is in the same class as $d(g)$. In other words, we can realize every twist of $X$ corresponding to an element of $\check{H}^1_{{\rm fppf}}(\Spec L,\mu_p)$ by choosing $\bar{g} \in \mathbb{G}_m^n(\bar{L})$ such that $f(\bar{L})(\bar{g})$ is defined over $L$ and translating $X$ along $\bar{g}^{-1}$.
 
\begin{Example} \label{ex: non-smooth twist}
Consider the quartic curve $Q = \{x^3y + y^3z + z^3x = 0\} \subseteq \mathbb{P}^2_k$ in characteristic $7$. It is stable under the $\mu_7$-action with weights $(4,2,1)$. Let $L = k(t)$ and consider $\bar{g} = (t^{4/7},t^{2/7},t^{1/7}) \in \mathbb{G}_m^2(\overline{L}) \subseteq \PGL_{3}(\overline{L})$. Then,
$$
\bar{g}^{-1}(Q) = \{t^2 x^3y + ty^3z + tz^3x = 0\} = \{tx^3y + y^3z + z^3x = 0\}.
$$
Spreading out over $\mathbb{A}^1_k - \{0\} = \Spec k[t,t^{-1}]$, we obtain a fibered surface $S$ over $\mathbb{A}^1_k - \{0\}$ which is easily checked to be smooth using the Jacobian criterion. Its generic fiber is a twisted form of $Q$ over $k(t)$
and this twisted form is regular, because it is the generic fiber of a flat morphism between smooth $k$-schemes. Taking the double cover of $(\mathbb{A}^1 - \{0\}) \times \mathbb{P}^2$ branched over $S$, we obtain a smooth fibered threefold $T$ over $\mathbb{A}^1 - \{0\}$ whose generic fiber is the regular del Pezzo surface $Y$ of degree $2$ given by the equation
$$
\{ w^2 = tx^3y + y^3z + z^3x \} \subseteq \mathbb{P}_{k(t)}(1,1,1,2).
$$
As before, $Y$ is regular, being the generic fiber of a flat morphism of smooth $k$-schemes. Observe, however, that $Y$ is not smooth, because $Y_{\overline{k(t)}} \cong X$, where $X$ is the del Pezzo surface of degree $2$ with a singularity of type $A_6$ given in our Table \ref{Table Eqn and Aut - char 7}. 
\end{Example}

\begin{Remark}
Example \ref{ex: non-smooth twist} shows that the bound $p \leq 7$ given in \cite[Proposition 5.2]{BernasconiTanaka} for the characteristics in which non-smooth regular RDP del Pezzo surfaces can exist is sharp. Using the approach explained in the beginning of Example \ref{ex: non-smooth twist}, it is not hard to construct similar examples if $p = 2,3,5$, but since this is not the topic of this article, we leave these constructions to the interested reader. Finally, we note that it is no mere coincidence that the Klein quartic in characteristic $7$ appears in this context and refer the reader to \cite{stoehr} for a closer study of this curve and its regular twists.
\end{Remark}
 
\section{Preliminaries on (RDP) del Pezzo surfaces}
In this section, we recall the definition of RDP del Pezzo surfaces and weak del Pezzo surfaces, which occur as minimal resolutions of RDP del Pezzo surfaces, as well as their basic properties.

\begin{Definition} \label{Def delPezzos}
Let $X$ and $\widetilde{X}$ be projective surfaces.
\begin{itemize}
\item
$X$ is a \emph{del Pezzo surface} if it is smooth and $-K_X$ is ample.
\item
$\widetilde{X}$ is a \emph{weak del Pezzo surface} if it is smooth and $-K_{\widetilde{X}}$ is big and nef.
\item 
$X$ is an \emph{RDP del Pezzo surface} if all its singularities are rational double points and $-K_X$ is ample.
\end{itemize}
\mbox{In all the above cases, the number $\deg (X)= K_X^2$ (resp. $\deg (\widetilde{X})= K_{\widetilde{X}}^2$) is called the \emph{degree of} $X$ (resp. $\widetilde{X}$).}
\end{Definition}

Recall that $1 \leq \deg(X) = \deg(\widetilde{X}) \leq 9$ and that every weak del Pezzo surface of degree $d$ and different from $\mathbb{P}^1 \times \mathbb{P}^1$ and the second Hirzebruch surface $\mathbb{F}_2$ can be realized as a blow-up of $\mathbb{P}^2$ in $9-d$ (possibly infinitely near) points in almost general position.

As mentioned in the beginning of this section, weak del Pezzo surfaces arise as the minimal resolutions of RDP del Pezzo surfaces and, conversely, every RDP del Pezzo surface $X$ is the anti-canonical model of a weak del Pezzo surface $\widetilde{X}$. The linear systems $|-nK_{\widetilde{X}}|$ are well studied (see e.g. \cite[Proposition 2.14, Theorem 2.15]{BernasconiTanaka} for proofs in positive characteristic). We denote the morphism induced by a linear system $|D|$ by $\varphi_{|D|}$ and recall that a curve singularity is called \emph{simple} if its completion is isomorphic to one of the normal forms given in \cite[Section 1]{GreuelKroening}.

\begin{Theorem} \label{thm: anti-cano embeddings/coverings}
Let $\widetilde{X}$ be a weak del Pezzo surface of degree $d$.
\begin{enumerate}
    \item If $d \geq 3$, then $\varphi_{|-K_{\widetilde{X}}|}$ factors as
    $
    \widetilde{X} \xrightarrow{\pi} X \xhookrightarrow{\varphi_{|-K_{X}|}} \mathbb{P}^d,
    $
    where $\varphi_{|-K_{X}|}$ is a closed immersion that realizes $X$ as a surface of degree $d$.
    \item If $d = 2$, then $\varphi_{|-K_{\widetilde{X}}|}$ factors as
    $
    \widetilde{X} \xrightarrow{\pi}  X \xrightarrow{\varphi_{|-K_{X}|}} \mathbb{P}^2,
    $
    where $\varphi_{|-K_{X}|}$ is finite flat of degree $2$. 
    
    \noindent  If $p \neq 2$, then $\varphi_{|-K_{X}|}$ is branched over a quartic curve $Q$ with simple singularities.
    \item If $d = 1$, then the $\varphi_{|-2K_{\widetilde{X}}|}$ factors as
    $
    \widetilde{X} \xrightarrow{\pi}  X \xrightarrow{\varphi_{|-2K_{X}|}} \mathbb{P}(1,1,2) \subseteq \mathbb{P}^3,
    $
    where $\mathbb{P}(1,1,2)$ is the quadratic cone and $\varphi_{|-2K_{X}|}$ is finite flat of degree $2$. 
    
    \noindent If $p \neq 2$, then $\varphi_{|-2K_{X}|}$ is branched over a sextic curve $S$ with simple singularities.
\end{enumerate}
\end{Theorem}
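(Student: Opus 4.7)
The plan is to assemble the three statements from standard facts about weak del Pezzo surfaces, namely Riemann--Roch, a vanishing theorem for big-and-nef divisors (valid in all characteristics on surfaces), base-point-freeness of the anti-canonical system, and the classical analysis of double covers. Because $X = \Proj \bigoplus_n H^0(\widetilde{X},-nK_{\widetilde{X}})$ is the anti-canonical model of $\widetilde{X}$, the morphism $\pi$ contracts precisely the $(-2)$-curves and is crepant, so $\pi_*\mathcal{O}_{\widetilde{X}}(-nK_{\widetilde{X}}) = \mathcal{O}_X(-nK_X)$ and the cohomology groups on $\widetilde{X}$ and $X$ agree. Combined with Riemann--Roch one obtains $h^0(\widetilde{X},-nK_{\widetilde{X}}) = h^0(X,-nK_X) = 1 + \tfrac{n(n+1)}{2}d$, which fixes the dimensions of the target projective spaces.

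For base-point-freeness I would invoke the standard results (proved e.g.\ in Demazure's SLN exposition, or in positive characteristic in \cite{BernasconiTanaka}): if $d\geq 2$ then $|-K_{\widetilde{X}}|$ is base-point-free, while if $d=1$ the pencil $|-K_{\widetilde{X}}|$ has a unique base point (by degree count $(-K_{\widetilde{X}})^2 = 1$) and $|-2K_{\widetilde{X}}|$ is base-point-free. In each case the induced morphism factors through $X$ because $\pi$ is the Stein factorization of $\varphi_{|-K_{\widetilde{X}}|}$ (resp.\ $\varphi_{|-2K_{\widetilde{X}}|}$). For case (1), $d\geq 3$, the resulting morphism $\varphi_{|-K_X|} \colon X \to \mathbb{P}^d$ has image of degree $(-K_X)^2 = d$ and is injective on the smooth locus and separates tangent vectors there; at an RDP $q$ one uses that $\pi^*\mathcal{O}_X(-K_X)|_{\pi^{-1}(q)}$ is trivial together with a local computation on the formal neighbourhood of the RDP to show that $-K_X$ remains very ample, so $\varphi_{|-K_X|}$ is a closed immersion of degree $d$.

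For cases (2) and (3), $X$ embeds into no projective space of the right dimension, so I would instead bound the degree of $\varphi_{|-K_X|}$ (resp.\ $\varphi_{|-2K_X|}$). In (2) the image lies in $\mathbb{P}^2$ and must equal $\mathbb{P}^2$ by dimension count, so the morphism is finite (by ampleness of $-K_X$) and flat by miracle flatness; its degree equals $(-K_X)^2 = 2$. In (3) the image lies in $\mathbb{P}^3$ and has degree $(-2K_X)^2/2 = 2$, so it is a quadric; since $|-K_X|$ is a pencil with a base point, the image is singular, hence is the quadric cone $\mathbb{P}(1,1,2)$, and again finite flatness follows. To identify the branch divisors, under the assumption $p\neq 2$ the double covers are separable and a relative-dualizing-sheaf/Hurwitz computation gives $K_X = \varphi^*(K_Y + \tfrac12 B)$, which for $Y = \mathbb{P}^2$ forces $B\in|\mathcal{O}_{\mathbb{P}^2}(4)|$ and for $Y=\mathbb{P}(1,1,2)$ forces $B\in|\mathcal{O}(6)|$.

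Finally, the simple-singularities statement for the branch curves follows from the classical correspondence, valid for $p\neq 2$, between rational double points on a double cover and simple plane curve singularities on its branch locus: étale-locally a double cover $w^2 = f$ has an RDP exactly when $\{f=0\}$ has an ADE singularity, which coincides with the normal forms in \cite[Section~1]{GreuelKroening}. The main obstacle in carrying all this out is the case (1) verification that $-K_X$ remains very ample at the RDPs in every characteristic; this is a local question at each RDP, and is handled uniformly by showing that the restriction of $|-K_X|$ to the formal neighbourhood of an RDP separates the two infinitely-near tangent directions — an entirely characteristic-free statement once the global base-point-freeness on $\widetilde{X}$ has been established.
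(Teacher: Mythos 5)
The paper does not prove this theorem; it records it as a known result and cites \cite[Proposition 2.14, Theorem 2.15]{BernasconiTanaka} for proofs in positive characteristic. Your sketch reconstructs exactly the standard argument behind that reference — Riemann--Roch plus vanishing for the dimension counts, base-point-freeness and Stein factorization through the anticanonical model, miracle flatness and the Hurwitz/branch-divisor computation for $d \leq 2$ (where the quadric cone in case (3) is most cleanly identified via the Veronese relation among $s^2, st, t^2 \in \Sym^2 H^0(-K_X) \subseteq H^0(-2K_X)$), and the ADE correspondence for the branch curve when $p \neq 2$ — and is essentially correct, the only thin spot being the local verification of very ampleness of $-K_X$ at the RDPs for $d \geq 3$, which is precisely the part delegated to the cited reference.
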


Next, we recall the notion of a \emph{marking} of a weak del Pezzo surface $\widetilde{X} \not \in \{\mathbb{P}^1 \times \mathbb{P}^1, \mathbb{F}_2\}$ and explain how to describe the negative curves on $\widetilde{X}$ in terms of such a marking.

\begin{itemize}
\item A \emph{marking} of $\widetilde{X}$ is an isomorphism $\phi: {\rm I}^{1,9-d} \overset{\sim}{\to} \Pic(\widetilde{X})$, where ${\rm I}^{1,9-d}$ is the lattice of rank $10 - d$ with quadratic form given by the diagonal matrix $(1,-1,\hdots,-1)$ with respect to a basis $e_0,\hdots,e_{9-d}$.
\item A realization $\pi: \widetilde{X} \to \mathbb{P}^2$ of $\widetilde{X}$ as an iterated blow-up of $\mathbb{P}^2$ induces a marking $\phi$ with $\phi(e_0) = \pi^* \mathcal{O}_{\mathbb{P}^2}(1)$ and $\phi(e_i)$ is the class of the preimage in $\widetilde{X}$ of the $i$-th point blown up by $\pi$. A marking that arises in this way is called \emph{geometric}.
\item If $\phi$ is a geometric marking of $\widetilde{X}$, then $\phi^{-1}(K_{\widetilde{X}}) = (-3,1,\hdots,1) =: k_{9-d}$.
\item The lattice $E_{9-d}$ is defined as $\langle k_{9-d} \rangle^\perp \subseteq {\rm I}^{1,9-d}$. 
\begin{itemize}
    \item For $d = 1,2,3$, the lattices $E_{9-d}$ are precisely the three exceptional irreducible root lattices.
    \item For $d = 4,5,6$, there are identifications $E_5 = D_5$, $E_4 = A_4$, $E_3 = A_2 \oplus A_1$.
    \item For $d = 7,8,$ the lattices $E_{9-d}$ are no root lattices. Every maximal root lattice contained in $E_2$ is isomorphic to $A_1$, and $E_1$ does not contain any $(-2)$-vectors.
\end{itemize}
\end{itemize}

Following \cite[Section 8.2]{Dolgachev-classical}, we let
$$
{\rm Exc}_{9-d} := \{ v \in {\rm I }^{1,9-d} \mid v^2 = -1, v.k_{9-d} = -1\} \subseteq {\rm I}^{1,9-d}
$$
be the subset of \emph{exceptional vectors}. Let $\mathcal{R}$ be a set of linearly independent $(-2)$-vectors in $E_{9-d}$, and define the cone
$$
C_{\mathcal{R}} := \{ v \in {\rm I}^{1,9-d} \otimes \mathbb{R} \mid v.w \geq 0 \text{ for all } w \in \mathcal{R} \}.
$$
For a sublattice $\Lambda$ of $E_{9-d}$, we denote the Weyl group of $\Lambda$ by $W(\Lambda)$. That is, $W(\Lambda)$ is the subgroup of the orthogonal group ${\rm O}({\rm I}^{1,9-d})$ generated by reflections along $(-2)$-vectors in $\Lambda$. With this notation, $W(\Lambda)$ preserves ${\rm Exc}_{9-d}$ and, for $\Lambda = \langle \mathcal{R} \rangle$, $C_{\mathcal{R}}$ is a fundamental domain for the action of $W(\Lambda)$ on ${\rm I}^{1,9-d} \otimes \mathbb{R}$.

\begin{Lemma} \label{lemma: (-1)-curves lattice theoretic}
With the above notation, we have the following description of certain sets of $(-1)$-curves on $\widetilde{X}$:
\begin{enumerate}
    \item 
    If $\mathcal{R}$ is the pre-image of the set of \emph{all} $(-2)$-curves on $\widetilde{X}$ under a geometric marking $\phi$, then $\phi$ induces a bijection
$$
\{ (-1)\text{-curves on } \widetilde{X} \}   \hspace{2mm} \longleftrightarrow \hspace{2mm} C_{\mathcal{R}} \cap {\rm Exc}_{9-d} \cong {\rm Exc}_{9-d}/W(\Lambda) .
$$

    \item
    If $\mathcal{R}' \subseteq \mathcal{R}$ with $\Lambda' := \langle \mathcal{R}' \rangle$, then $\phi$ induces a bijection
$$
\{ (-1)\text{-curves on } \widetilde{X} \text{ disjoint from } \phi(\mathcal{R}') \} \hspace{2mm} \longleftrightarrow \hspace{2mm} C_{\mathcal{R}} \cap {\rm Exc}_{9-d} \cap (\Lambda')^{\perp} = C_{\mathcal{R}} \cap {\rm Exc}_{9-d}^{W(\Lambda ')}.
$$

    \item
    If, moreover, $\Lambda'$ is a sum of connected components of $\Lambda$, then $\phi$ induces a bijection
    $$
\{ (-1)\text{-curves on } \widetilde{X} \text{ disjoint from } \phi(\mathcal{R}') \} \hspace{2mm} \longleftrightarrow \hspace{2mm} ({\rm Exc}_{9-d}^{W(\Lambda')})/W(\Lambda).
$$
\end{enumerate}
\end{Lemma}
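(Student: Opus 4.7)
The overall plan is to prove (1) by combining Riemann--Roch on $\widetilde{X}$ with the Weyl-group structure of $\Pic(\widetilde{X})$, and then to deduce (2) and (3) formally from (1) together with basic facts about reflection groups acting on root lattices.

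For (1), one inclusion is easy: if $E$ is a $(-1)$-curve, adjunction gives $E \cdot K_{\widetilde{X}} = -1$, and since $E$ has different self-intersection from any $(-2)$-curve $C$ we have $E \cdot C \geq 0$, so $\phi^{-1}([E]) \in C_{\mathcal{R}} \cap {\rm Exc}_{9-d}$. For the converse, given $v \in C_{\mathcal{R}} \cap {\rm Exc}_{9-d}$, set $L := \phi(v)$; Riemann--Roch yields $\chi(L) = 1$, and one rules out $h^0(K_{\widetilde{X}} - L) > 0$ using that $-K_{\widetilde{X}}$ is big and nef with $(-K_{\widetilde{X}})^2 = d \geq 1$, so $|L|$ contains an effective divisor $D$. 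Since $D \cdot (-K_{\widetilde{X}}) = 1$, and since an irreducible curve $F$ on $\widetilde{X}$ with $F \cdot (-K_{\widetilde{X}}) = 0$ is necessarily a $(-2)$-curve (by Hodge index applied to the big and nef class $-K_{\widetilde{X}}$ together with adjunction), we can write $D = F + \sum n_i C_i$ with $F$ irreducible, $F \cdot (-K_{\widetilde{X}}) = 1$, $n_i \geq 0$, and $C_i$ a $(-2)$-curve distinct from $F$. Setting $E' := \sum n_i C_i$, the inequality $D \cdot C_j \geq 0$ coming from $v \in C_{\mathcal{R}}$ forces $D \cdot E' \geq 0$; combined with $D^2 = -1$, $F \cdot E' \geq 0$, and the adjunction formula $F^2 = 2g(F)-1$, one concludes $g(F) = 0$, $F^2 = -1$, $F \cdot E' = 0$, and $(E')^2 = 0$. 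The negative definiteness of the intersection form on the exceptional $(-2)$-curves then forces $E' = 0$, so $D = F$ is the required $(-1)$-curve of class $L$. The identification with ${\rm Exc}_{9-d}/W(\Lambda)$ is immediate from the fact that $C_{\mathcal{R}}$ is a fundamental domain for the action of $W(\Lambda)$ on ${\rm I}^{1,9-d} \otimes \mathbb{R}$.

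For (2), disjointness of a $(-1)$-curve $E$ from $\phi(\mathcal{R}')$ is, since $E$ is not itself a $(-2)$-curve, equivalent to $\phi^{-1}([E]) \cdot w = 0$ for all $w \in \mathcal{R}'$, i.e.\ $\phi^{-1}([E]) \in (\Lambda')^{\perp}$, so (1) specializes to the asserted bijection with $C_{\mathcal{R}} \cap {\rm Exc}_{9-d} \cap (\Lambda')^{\perp}$. The equality $(\Lambda')^{\perp} \cap {\rm Exc}_{9-d} = {\rm Exc}_{9-d}^{W(\Lambda')}$ is the standard fact that a reflection $s_w$ fixes $v$ iff $v \cdot w = 0$, applied to generators of $W(\Lambda')$ given by the reflections along elements of $\mathcal{R}'$. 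For (3), decompose $\Lambda = \Lambda' \oplus \Lambda''$ orthogonally, with $\Lambda''$ the sum of the remaining connected components, so $W(\Lambda) = W(\Lambda') \times W(\Lambda'')$. Then $W(\Lambda')$ acts trivially on ${\rm Exc}_{9-d}^{W(\Lambda')}$ and $W(\Lambda'')$ preserves it (as it commutes with $W(\Lambda')$), so the $W(\Lambda)$-orbits on ${\rm Exc}_{9-d}^{W(\Lambda')}$ coincide with the $W(\Lambda'')$-orbits. Since $v \cdot w = 0$ automatically for $v \in (\Lambda')^{\perp}$ and $w \in \mathcal{R}'$, we have $C_{\mathcal{R}} \cap {\rm Exc}_{9-d}^{W(\Lambda')} = C_{\mathcal{R} \setminus \mathcal{R}'} \cap {\rm Exc}_{9-d}^{W(\Lambda')}$, and the latter is a set of representatives for the $W(\Lambda'')$-orbits. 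Combining with (2) yields the bijection with ${\rm Exc}_{9-d}^{W(\Lambda')}/W(\Lambda)$.

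The main technical step is the effectivity/irreducibility argument in (1) that actually produces a $(-1)$-curve from each exceptional vector in the fundamental chamber; once this is in place, parts (2) and (3) are purely formal consequences of elementary Weyl-group combinatorics on root sublattices of $E_{9-d}$.
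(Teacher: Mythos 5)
Your proof is correct, but it takes a genuinely different (more self-contained) route for parts (1) and (2): the paper simply cites Dolgachev's book (Lemma 8.2.22 and Proposition 8.2.34 of \cite{Dolgachev-classical}) for these two statements and only writes out an argument for part (3). Your Riemann--Roch argument for (1) is sound: $\chi(L)=1$, the nefness of $-K_{\widetilde{X}}$ kills $h^0(K_{\widetilde{X}}-L)$, and the decomposition $D = F + E'$ with the chamber condition $D\cdot C_j\ge 0$ correctly forces $F^2=-1$, $F\cdot E' = (E')^2=0$, whence $E'=0$ by negative definiteness of $\langle K_{\widetilde{X}}\rangle^\perp$; one might add a sentence noting that distinct irreducible curves in the same class $L$ would intersect in $L^2=-1<0$, so the resulting map $v\mapsto D$ is well defined and the correspondence is indeed a bijection. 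Your deduction of (2) from (1) via the standard fact that a reflection $s_w$ fixes $v$ iff $v\cdot w=0$ is exactly the content of the cited references, and your part (3) matches the paper's argument: both use the orthogonal splitting $\Lambda=\Lambda'\oplus\Lambda''$, the $W(\Lambda)$-stability of ${\rm Exc}_{9-d}^{W(\Lambda')}$, and the fundamental-domain property of the chamber; your version makes the reduction to $W(\Lambda'')$-orbits explicit where the paper compresses this into the identity $C_{\mathcal{R}}\cap{\rm Exc}_{9-d}\cap(\Lambda')^\perp=(C_{\mathcal{R}}\cap{\rm Exc}_{9-d})^{W(\Lambda')}$. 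What your approach buys is independence from the external reference at the cost of length; what the paper's approach buys is brevity, since the geometric input (effectivity and irreducibility of the divisor attached to an exceptional vector in the fundamental chamber) is precisely what the cited results package.
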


\begin{proof}
For $(1)$ and $(2)$, see \cite[Lemma 8.2.22 and Proposition 8.2.34]{Dolgachev-classical}. 
To prove $(3)$, we note that we have an orthogonal decomposition $\Lambda = \Lambda' \oplus \Lambda''$, where $\Lambda = \langle \mathcal{R} \rangle$ and $\Lambda'' = \langle \mathcal{R} \setminus \mathcal{R}' \rangle$. Therefore, the $W(\Lambda)$-action preserves ${\rm Exc}_{9-d} \cap (\Lambda')^{\perp} = {\rm Exc}_{9-d}^{W(\Lambda')}$ and, by $(1)$, we can write
$$
C_{\mathcal{R}} \cap {\rm Exc}_{9-d} \cap (\Lambda')^\perp = (C_{\mathcal{R}} \cap {\rm Exc}_{9-d})^{W(\Lambda')} \cong ({\rm Exc}_{9-d}^{W(\Lambda')})/W(\Lambda).
$$
\end{proof}

\section{Group scheme actions on anti-canonical models}
The purpose of this section is to recall some basic facts about group scheme actions on (blow-ups of) normal projective surfaces and to describe the automorphism scheme of an RDP del Pezzo surface in terms of the anti-(bi-)canonical morphisms recalled in Theorem \ref{thm: anti-cano embeddings/coverings}.

Quite generally, the key tool to control the behaviour of group scheme actions under birational morphisms is Blanchard's Lemma \cite[Theorem 7.2.1]{Brion1}.

 \begin{Theorem}\emph{(Blanchard's Lemma)} \label{thm: BlanchardLemma}
Let $\pi: \widetilde{X} \to X$ be a morphism of proper schemes with $\pi_* \cal{O}_{\widetilde{X}} = \cal{O}_X$. Then, $\pi$ induces a homomorphism of group schemes $\pi_*: \Aut_{\widetilde{X}}^0 \to \Aut_X^0$. If $\pi$ is birational, then $\pi_*$ is a closed immersion.
\end{Theorem}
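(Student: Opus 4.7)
The plan is to proceed in three steps: construct the homomorphism $\pi_*$ by a descent argument, verify that it is a monomorphism when $\pi$ is birational, and promote this to a closed immersion via the structure theory of algebraic groups.

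For the first step, I would work functorially. Given a $T$-valued point $\phi$ of $\Aut^0_{\widetilde{X}}$, that is, a $T$-automorphism of $\widetilde{X}_T$, the task is to produce a $T$-automorphism $\psi$ of $X_T$ with $\psi \circ \pi_T = \pi_T \circ \phi$. Flat base change preserves the hypothesis $\pi_* \mathcal{O}_{\widetilde{X}} = \mathcal{O}_X$ upon passing to $\pi_T$, so the fibers of $\pi_T$ remain geometrically connected. To descend $\pi_T \circ \phi$ through $\pi_T$, it then suffices to invoke the rigidity lemma in the universal case: taking $T = \Aut^0_{\widetilde{X}}$, which is connected, and observing that $\pi \circ \phi_{\mathrm{univ}}$ agrees with $\pi$ on the slice over the identity element, rigidity spreads the fiberwise factorization of $\pi \circ \phi_{\mathrm{univ}}$ through $\pi$ across all of $\Aut^0_{\widetilde{X}}$. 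Applying the same reasoning to $\phi^{-1}$ produces the inverse of $\psi$, and the assignment $\phi \mapsto \psi$ is a group-scheme homomorphism by construction.

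For the kernel-triviality step, let $U \subseteq \widetilde{X}$ be the maximal open subset on which $\pi$ is an isomorphism. Since $\pi$ is birational and $\widetilde{X}$ is reduced, $U$ is dense and in fact schematically dense, a property preserved by flat base change to any $k$-scheme $T$. Given $\phi \in \ker(\pi_*)(T)$, the equation $\pi_T \circ \phi = \pi_T$ combined with the fact that $\pi_T|_{U_T}$ is a monomorphism forces $\phi|_{U_T} = \id_{U_T}$; since $U_T$ is schematically dense in the separated $T$-scheme $\widetilde{X}_T$, it follows that $\phi = \id_{\widetilde{X}_T}$, so $\ker(\pi_*)$ is trivial.

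Finally, any morphism of group schemes of finite type over the field $k$ factors as a faithfully flat epimorphism onto a closed subgroup scheme followed by a closed immersion. If $\pi_*$ is additionally a monomorphism, then so is its epimorphism factor, and a faithfully flat monomorphism is an isomorphism; hence $\pi_*$ itself is a closed immersion. The main obstacle in the argument is the rigidity step of the construction: connectedness of $\Aut^0_{\widetilde{X}}$ is used essentially to spread a fiberwise factorization to a global one, and a secondary technical subtlety is the persistence of schematic density after base change to non-reduced test schemes $T$, which one checks via flatness of $T \to \Spec k$.
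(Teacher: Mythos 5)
The paper does not prove this statement at all: it is quoted from \cite[Theorem 7.2.1]{Brion1}, so there is no in-paper argument to compare against. Your outline is, in essence, the proof given in that reference: a rigidity argument over the connected base $\Aut_{\widetilde{X}}^0$ to descend the universal automorphism through $\pi$ (using that $\pi_*\mathcal{O}_{\widetilde{X}} = \mathcal{O}_X$ persists under flat base change and forces geometrically connected fibers), triviality of the kernel via a schematically dense open over which $\pi$ is an isomorphism, and the standard fact that a monomorphism of group schemes of finite type over a field is a closed immersion. All three steps are sound, and the two subtleties you single out (connectedness for rigidity, preservation of schematic density under the flat base change $\widetilde{X}_T \to \widetilde{X}$) are indeed the right ones.

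One small point to tighten in the kernel step: from $\pi_T \circ \phi = \pi_T$ and the fact that $\pi_T|_{U_T}$ is a monomorphism you cannot immediately conclude $\phi|_{U_T} = \id_{U_T}$, because a priori $\phi$ need not map $U_T$ into $U_T$. This is repaired by taking $U = \pi^{-1}(W)$, where $W \subseteq X$ is the largest open subset over which $\pi$ is an isomorphism; then $\phi^{-1}(U_T) = \phi^{-1}\pi_T^{-1}(W_T) = (\pi_T\circ\phi)^{-1}(W_T) = \pi_T^{-1}(W_T) = U_T$, so $\phi$ restricts to an endomorphism of $U_T$ and the cancellation against the isomorphism $U_T \cong W_T$ is legitimate. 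With that adjustment the argument is complete.
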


Given an action of a group scheme $G$ on a scheme $X$ and a closed subscheme $Z \subseteq X$, we let $\Stab_{G}(Z) \subseteq \Aut_X$ be the \emph{stabilizer subgroup scheme} of $Z$. The following proposition describes the image of $\pi_*$ if $\pi$ is a blow-up of a closed point on a normal surface with at worst rational double points.
\begin{Proposition} \label{prop: StabiLemma}
Let $X$ be a normal surface with at worst rational double points and let $\pi: \widetilde{X} \to X$ be the blow-up of $X$ in a closed point $P$. Then, $\pi_*(\Aut_{\widetilde{X}}^0) = (\Stab_{\Aut_X}(P))^0$.
\end{Proposition}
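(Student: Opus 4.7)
The plan is to establish the two inclusions separately. The more substantive direction $(\Stab_{\Aut_X}(P))^0 \subseteq \pi_*(\Aut^0_{\widetilde{X}})$ will be proved by lifting the stabilizer action from $X$ to $\widetilde{X}$ via the universal property of the blow-up, while the reverse inclusion $\pi_*(\Aut^0_{\widetilde{X}}) \subseteq (\Stab_{\Aut_X}(P))^0$ will follow from Blanchard's Lemma combined with the rigidity of the exceptional fiber $E = \pi^{-1}(P)$ under connected group scheme actions.

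For the lifting step, I set $H := (\Stab_{\Aut_X}(P))^0$ with restricted action $\alpha : H \times X \to X$, and consider the composition $a := \alpha \circ (\id_H \times \pi) : H \times \widetilde{X} \to X$. By the universal property of the blow-up of $X$ at $P$, the morphism $a$ factors uniquely through $\pi$ provided the ideal sheaf $a^{-1}(\mathfrak{m}_P) \cdot \mathcal{O}_{H \times \widetilde{X}}$ is invertible. The key identity is the scheme-theoretic equality $\alpha^{-1}(P) = H \times P$, which I would verify via the action isomorphism $\Phi : H \times X \to H \times X$, $(h,x) \mapsto (h, h \cdot x)$, with inverse $(h,y) \mapsto (h, h^{-1} \cdot y)$: since $H$ is a subgroup scheme of $\Stab_{\Aut_X}(P)$, both $h$ and $h^{-1}$ stabilize $P$, so $\Phi$ and $\Phi^{-1}$ both preserve $H \times P$ scheme-theoretically, and $\alpha = p_X \circ \Phi$ then yields $\alpha^{-1}(\mathfrak{m}_P) \cdot \mathcal{O}_{H \times X} = p_X^{-1}(\mathfrak{m}_P) \cdot \mathcal{O}_{H \times X}$. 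Pulling back along $\id_H \times \pi$ turns this into the pullback of the invertible sheaf $\mathcal{O}_{\widetilde{X}}(-E)$ via the projection $H \times \widetilde{X} \to \widetilde{X}$, which is invertible. The resulting lifted action produces a group scheme homomorphism $H \to \Aut_{\widetilde{X}}$ landing in $\Aut^0_{\widetilde{X}}$ by connectedness of $H$, and by uniqueness the composition with $\pi_*$ recovers the original inclusion $H \hookrightarrow \Aut_X$.

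For the reverse inclusion, Blanchard's Lemma (Theorem~\ref{thm: BlanchardLemma}) provides a closed immersion $\pi_* : \Aut^0_{\widetilde{X}} \hookrightarrow \Aut^0_X$ whose image is connected, so it suffices to show that the image lies in $\Stab_{\Aut_X}(P)$; equivalently, the orbit morphism $\rho : \Aut^0_{\widetilde{X}} \to X$, $g \mapsto \pi_*(g)(P)$, factors through $P$. This follows once the action of $\Aut^0_{\widetilde{X}}$ on $\widetilde{X}$ preserves $E$ as a subscheme, because applying $\pi$ then collapses the image to $\pi(E) = P$. Such preservation uses that the connected group scheme $\Aut^0_{\widetilde{X}}$ acts trivially on the discrete group $\NS(\widetilde{X})$, fixing the class of each irreducible component of $E$; negative-definiteness of the intersection form on the components of the exceptional fiber then forces each component to be the unique effective representative of its class, and hence to be preserved as a divisor.

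The main obstacle is precisely this last scheme-theoretic rigidity: on $k$-points it is immediate that $\pi_*(g)(P) = P$, since otherwise $g \in \Aut^0_{\widetilde{X}}(k)$ would map the one-dimensional curve $E$ into a zero-dimensional fiber of $\pi$, but upgrading this to a statement about functors of points requires the $\NS$-triviality argument together with the negativity of the exceptional intersection form. Once the preservation of $E$ is in hand, the compatibility $\pi \circ \mu_{\widetilde{X}} = \mu_X \circ (\pi_* \times \id_X) \circ (\id \times \pi)$ of actions restricted to $\Aut^0_{\widetilde{X}} \times E$ completes the argument.
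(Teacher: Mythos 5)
Your first inclusion, $(\Stab_{\Aut_X}(P))^0 \subseteq \pi_*(\Aut_{\widetilde{X}}^0)$, via the universal property of the blow-up is correct and standard: the identity $\alpha^{-1}(\mathfrak{m}_P)\cdot\mathcal{O}_{H\times X} = p_X^{-1}(\mathfrak{m}_P)\cdot\mathcal{O}_{H\times X}$ does follow from the functorial definition of the stabilizer, and pulling back along $\id_H\times\pi$ yields an invertible ideal, so the action lifts. The paper does not spell this out; it delegates both inclusions to \cite[Proposition 2.7]{Martin}, which reduces the whole proposition to exhibiting an infinitesimally rigid closed subscheme of $\widetilde{X}$ with schematic image $P$.

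The gap is in your reverse inclusion, and it sits exactly where you locate ``the main obstacle.'' Triviality of the $\Aut_{\widetilde{X}}^0$-action on $\NS(\widetilde{X})$ together with negative definiteness shows that each component of $E$ is the \emph{unique effective cycle} in its class, which pins down $g(E)$ for \emph{field-valued} points $g$. It does not pin down $g(E_R)$ for $R$-valued points with $R$ Artinian local: a flat family of subschemes over $R$ whose special fibre is $E_i$ is a point of the deformation functor of $E_i$ in $\widetilde{X}$, and the obstruction to it being the constant family is measured by $H^0(E_i,\mathcal{N}_{E_i/\widetilde{X}})$, not by the Néron--Severi class. Since the whole point of the proposition is to compute non-reduced automorphism schemes (the paper applies it to $\mu_p$- and $\alpha_p$-actions), the scheme-theoretic statement is the entire content, and your argument does not supply the required vanishing. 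Worse, you work with the irreducible components of $E$: when $P$ is an RDP other than $A_1$, the blow-up $\widetilde{X}$ is itself singular and these components pass through its singular points, so they are generally not Cartier and their normal sheaves are delicate. The paper avoids both problems by taking $E$ to be the full scheme-theoretic fibre $\pi^{-1}(P)$ (a Cartier divisor by construction), proving $\pi^*\omega_X\cong\omega_{\widetilde{X}}$ from crepancy of RDP resolutions, deducing $\mathcal{N}_{E/\widetilde{X}}\cong\omega_E$ by adjunction, and identifying $E$ with a (possibly non-reduced) conic in $\mathbb{P}^2$ so that $h^0(\mathcal{N}_{E/\widetilde{X}}) = h^0(\mathcal{O}_{\mathbb{P}^2}(-1)|_E) = 0$. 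That computation, or some substitute establishing infinitesimal rigidity of the exceptional fibre, is what your proof is missing.
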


\begin{proof}
By \cite[Proposition 2.7]{Martin}, it suffices to find an infinitesimal rigid subscheme $E \subseteq \widetilde{X}$ whose schematic image is $P$. Thus, let $E$ be the exceptional divisor of $\pi$, with scheme structure given by the inverse image ideal sheaf of $P$. In particular, $E$ is a Cartier divisor on $\widetilde{X}$.

If $P$ is smooth, then $E$ is a $(-1)$-curve, hence $E$ is infinitesimally rigid. 
If $P$ is not smooth, then $P$ is a rational double point. In particular, $\pi$ factors the minimal resolution $\pi': \widetilde{X}' \to X$ and $\pi'^* \omega_{X} \cong \omega_{\widetilde{X}'}$. Since $\widetilde{X}$ has at worst rational double points as well and $\widetilde{X}' \to \widetilde{X}$ is its minimal resolution, we obtain $\pi^* \omega_X \cong \omega_{\widetilde{X}}$ using the projection formula. 
Now, $\omega_X$ is trivial in a neighborhood of $P$, so $\omega_{\widetilde{X}}$ is trivial in a neighborhood of $E$. Thus, the normal sheaf $\mathcal{N}_{E/\widetilde{X}}$ of $E$ in $\widetilde{X}$ coincides with $\omega_E$ by adjunction.

On the other hand, the rational double point $P \in X$ has multiplicity $2$ and embedding dimension $3$, hence $E$ is isomorphic to a (possibly non-reduced) conic in $\mathbb{P}^2$. As such, it satisfies $\omega_E \cong \mathcal{O}_{\mathbb{P}^2}(-1)|_E$. Hence, $$h^0(E,\mathcal{N}_{E/\widetilde{X}}) = h^0(E,\mathcal{O}_{\mathbb{P}^2}(-1)|_E) = 0,$$ so, by \cite[Proposition 3.2.1.(ii)]{Sernesi}, $E$ is infinitesimally rigid. This finishes the proof. 
\end{proof}

If $X$ is an RDP del Pezzo surface, then we have the following description of $\Aut_X$ in terms of the anti-canonical morphisms $\varphi_{|-nK_X|}$ of $X$.

\begin{Proposition} \label{prop: ShortExactAutSequences}
Let $X$ be an RDP del Pezzo surface of degree $d$. Then, $\varphi_{|-nK_X|}$ is $\Aut_X$-equivariant for all $n \geq 0$ and the following hold.
\begin{enumerate}
    \item If $d \geq 3$, then $\Aut_X = \Stab_{{\rm PGL}_{d+1}}(X)$.
    \item If $d = 2$ and $p \neq 2$, then there is an exact sequence of group schemes
    $$
    1 \to \underline{\mathbb{Z}/2\mathbb{Z}} \to \Aut_X \to \Stab_{{\rm PGL}_3}(Q)\to 1,
    $$
    where $Q$ is the branch quartic of the anti-canonical morphism $X \to \mathbb{P}^2$.
    \item If $d = 1$ and $p \neq 2$, then there is an exact sequence of group schemes
    $$
    1 \to \underline{\mathbb{Z}/2\mathbb{Z}} \to \Aut_X \to \Stab_{\Aut_{\mathbb{P}(1,1,2)}}(S) \to 1,
    $$
    where $S$ is the branch sextic of the anti-bi-canonical morphism $X \to \mathbb{P}(1,1,2) \subseteq \mathbb{P}^3$.
\end{enumerate}
\end{Proposition}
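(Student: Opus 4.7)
The overall strategy is to exploit the fact that $-K_X$ (respectively $-2K_X$ in degree $1$) is intrinsic to $X$, so that any $T$-valued automorphism $g \in \Aut_X(T)$ acts naturally on $H^0(X,\omega_X^{\otimes -n})$ and hence induces a linear automorphism of the target projective space compatible with $\varphi_{|-nK_X|}$. This functorial construction yields the $\Aut_X$-equivariance of $\varphi_{|-nK_X|}$ at the group-scheme level. From here, each of the three claims follows by analyzing the structure of the anti-(bi-)canonical morphism described in Theorem \ref{thm: anti-cano embeddings/coverings}.

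For (1), the morphism $\varphi_{|-K_X|}: X \hookrightarrow \mathbb{P}^d$ is a closed immersion, so equivariance gives a homomorphism $\Aut_X \to \Stab_{\PGL_{d+1}}(X)$. I would check this is an isomorphism on $T$-valued points: the forward map is injective because the restriction of a linear automorphism to the closed subscheme $X$ is unique, and surjectivity is immediate since any element of $\Stab_{\PGL_{d+1}}(X)(T)$ restricts to an element of $\Aut_X(T)$. Since both are representable group schemes, Yoneda then promotes this to an isomorphism.

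For (2) and (3), write $Y = \mathbb{P}^2$ or $Y = \mathbb{P}(1,1,2)$ and let $B$ denote the branch divisor ($Q$ or $S$). The morphism $\varphi: X \to Y$ is finite flat of degree $2$, and in characteristic $p\neq 2$ the decomposition $\varphi_\ast\mathcal{O}_X = \mathcal{O}_Y \oplus L^{-1}$ into $\pm 1$-eigenspaces of the canonical Galois involution $\iota$ produces an involution $\iota \in \Aut_X$ acting trivially on $Y$. Equivariance of $\varphi$ (which we get from part of Theorem \ref{thm: anti-cano embeddings/coverings} together with the argument of the first paragraph) therefore yields a homomorphism $\Aut_X \to \Stab_{\Aut_Y}(B)$ whose kernel contains the subgroup $\underline{\mathbb{Z}/2\mathbb{Z}} = \langle \iota \rangle$. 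I would then show this inclusion is an equality by arguing that any $T$-valued automorphism of $X$ inducing the identity on $Y_T$ must act as $\pm 1$ on each fiber of $\varphi$, giving a morphism to $\underline{\mu_2} = \underline{\mathbb{Z}/2\mathbb{Z}}$.

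The main obstacle is the surjectivity of the map $\Aut_X \to \Stab_{\Aut_Y}(B)$, which requires lifting an arbitrary $g \in \Stab_{\Aut_Y}(B)(T)$ to an automorphism of $X_T$. Here I would use that, since $\Pic(\mathbb{P}^2) = \mathbb{Z}$ (and similarly for the Weil divisor class group of $\mathbb{P}(1,1,2)$, which is cyclic and torsion-free in the relevant degree), the line bundle $L$ with $L^{\otimes 2} \cong \mathcal{O}_Y(B)$ is uniquely determined by $B$. Consequently, the double cover $X \to Y$ is, up to canonical isomorphism, the unique such cover branched along $B$, so pulling $X_T$ back along $g$ produces a cover isomorphic to $X_T$; any choice of isomorphism gives the desired lift of $g$, and two lifts differ by $\iota$. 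This promotes the set-theoretic surjectivity to a surjection of fppf sheaves, completing the proof of exactness.
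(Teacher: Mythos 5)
Your argument for the equivariance of $\varphi_{|-nK_X|}$, for part (1), and for the identification of the kernel with $\underline{\mathbb{Z}/2\mathbb{Z}}$ all run parallel to the paper's proof (the paper identifies the kernel by restricting to the generic point and using that $k(X)/k(\mathbb{P}^2)$ is Galois of degree $2$ when $p \neq 2$, while you use the eigenspace decomposition of $\varphi_*\mathcal{O}_X$ and a fibrewise $\pm 1$ argument; both are fine). The problem is your surjectivity argument. The uniqueness of the square root $L$ of $\mathcal{O}_Y(B)$ does \emph{not} imply that $g^*X_T$ is isomorphic to $X_T$ over $T$: a double cover is determined by the pair consisting of $L$ and a section of $L^{\otimes 2}$ cutting out $B$, not by the divisor $B$ alone. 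An element $g \in \Stab_{\Aut_Y}(B)(R)$ sends the defining equation $Q$ to $\lambda Q$ for some unit $\lambda \in R^\times$, and the covers $\{w^2 = Q\}$ and $\{w^2 = \lambda Q\}$ are isomorphic over $R$ only when $\lambda$ is a square in $R$. So your claim that ``any choice of isomorphism gives the desired lift'' presupposes an isomorphism over $T$ that in general does not exist; if your argument were correct it would prove surjectivity on all $T$-valued points, which is false (this failure is exactly the source of the nontrivial regular twists constructed in Example~\ref{ex: non-smooth twist}).

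The correct, and still short, fix is the one the paper uses: given $\sigma \in \Stab_{{\rm PGL}_3}(Q)(R)$ sending $Q$ to $\lambda Q$, pass to the faithfully flat extension $R' = R[\sqrt{\lambda}]$ and lift $\sigma$ to an automorphism of $X_{R'}$ via $w \mapsto \pm\sqrt{\lambda}\,w$. This shows the homomorphism $\Aut_X \to \Stab_{{\rm PGL}_3}(Q)$ is faithfully flat, i.e.\ surjective as a map of fppf sheaves, which is what exactness of the sequence of group schemes requires. The same remark applies to your treatment of the degree $1$ case via $\mathbb{P}(1,1,2)$.
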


\begin{proof}
By \cite[Remark 2.15.(iv)]{BrionNotes-linearization}, the line bundles $\omega_X^{\otimes (- n)}$ admit natural $\Aut_X$-linearizations for all $n \geq 0$ and hence the natural action of $\Aut_X$ on the space of global sections of $\omega_X^{\otimes (-n)}$ induces a homomorphism $f_n: \Aut_X \to {\rm PGL}_{N+1}$ for $N = \dim(H^0(X,\omega_X^{\otimes (-n)})) - 1$ making the rational map $\varphi_{|-nK_X|}:X \dashrightarrow \mathbb{P}^N$ $\Aut_X$-equivariant.

If $d \geq 3$, then the anti-canonical map is an embedding by Theorem \ref{thm: anti-cano embeddings/coverings}, hence $f_1$ is a monomorphism. By \cite[Lemma 2.5]{Martin}, $f_1$ factors through $\Stab_{{\rm PGL}_{d+1}}(X)$. Conversely, restricting the $G$-action on $\mathbb{P}^d$ to $X$ yields a left-inverse $g_1:  \Stab_{{\rm PGL}_{d+1}}(X) \to \Aut_X$ to $f_1$. Since $X$ is not contained in a proper linear subspace of $\mathbb{P}^d$ and the fixed locus of a subgroup scheme of ${\rm PGL}_{d+1}$ is a linear subspace, $g_1$ has to be a monomorphism. Hence $f_1$ is an isomorphism.

If $d =2$ and $p \neq 2$, then the anti-canonical map is a finite flat cover of $\mathbb{P}^2$ branched over a quartic curve $Q \subseteq \mathbb{P}^2$ by Theorem \ref{thm: anti-cano embeddings/coverings}. Let $K$ be the kernel of $f_1$. Restricting the action of $K$ on $X$ to the generic point of $X$ yields a $k(\mathbb{P}^2)$-linear action of $K_{k(\mathbb{P}^2)}$ on the degree $2$ field extension $k(X)$ of $k(\mathbb{P}^2)$. Since $p \neq 2$, the field extension $k(\mathbb{P}^2) \subseteq k(X)$ is Galois, which shows $K = \underline{\mathbb{Z}/2\mathbb{Z}}$.
Since $K$ is normal in $\Aut_X$, the action of $\Aut_X$ on $X$ preserves the fixed locus $X^K$, hence, by \cite[Lemma 2.5]{Martin}, the induced action of $\Aut_X$ on $\mathbb{P}^2$ preserves the scheme-theoretic image of $X^K$ under $\varphi_{|-K_X|}$, which is nothing but $Q$. Hence, $f_1$ factors through $\Stab_{{\rm PGL}_3}(Q)$. In order to show faithful flatness of $f_1': \Aut_X \to \Stab_{{\rm PGL}_3}(Q)$, we write $X$ as $\{ w^2 = Q(x,y,z) \} \subseteq \mathbb{P}(1,1,1,2)$. For every $k$-algebra $R$ and every automorphism $\sigma$ of $\mathbb{P}^2_R$ preserving $Q_R = \{Q(x,y,z) = 0 \} \subseteq \mathbb{P}^2_R$, that is, mapping $Q(x,y,z)$ to $\lambda Q(x,y,z)$ for some $\lambda \in R^{\times}$, we can pass to the faithfully flat ring extension $R' := R[\sqrt{\lambda}]$ of $R$ and lift $\sigma$ to an automorphism of $X_{R'}$ by mapping $w$ to $\pm \sqrt{\lambda} w$. Hence, $f_1'$ is faithfully flat and thus the sequence in (2) is exact. 

If $d = 1$ and $p \neq 2$, we can apply essentially the same argument as in the previous paragraph to the anti-bi-canonical morphism $\varphi_{|-2K_{X}|}: X \to \mathbb{P}^3$: Indeed, the argument for $K = \underline{\mathbb{Z}/2\mathbb{Z}}$ is exactly the same as in the previous paragraph. To prove faithful flatness of $f_2': \Aut_X \to \Stab_{\Aut_{\mathbb{P}(1,1,2)}}(S)$, we can write $X$ as a hypersurface in $\mathbb{P}(1,1,2,3)$ by identifying the quadratic cone with $\mathbb{P}(1,1,2)$ and then argue as above.
\end{proof}

\section{On equivariant resolutions}
It is an immediate consequence of the uniqueness of the minimal resolution $\widetilde{X}$ of a projective surface $X$ that the action of the automorphism \emph{group} $\Aut_X(k)$ on $X$ lifts to $\widetilde{X}$. Over the complex numbers, this implies that the action of the automorphism \emph{scheme} $\Aut_X$ lifts to $\widetilde{X}$. In general, this is no longer true in positive characteristic. 
In this section, we will study this phenomenon.

\begin{Definition}
Let $\pi: \widetilde{X} \to X$ be a proper birational morphism of schemes.
\begin{enumerate}
    \item The morphism $\pi$ is called \emph{$T_X$-equivariant} if the natural map $\pi_* T_{\widetilde{X}} \to T_X$ is an isomorphism.
    \item Assume additionally that $X$ is proper and $\pi_* \mathcal{O}_{\widetilde{X}} = \mathcal{O}_X$. Then, $\pi$ is called \emph{$\Aut_X^0$-equivariant} if the closed immersion $\pi_*: \Aut_{\widetilde{X}}^0 \hookrightarrow \Aut_X^0$ induced by Blanchard's Lemma is an isomorphism.
\end{enumerate}
\end{Definition}

\begin{Remark}\label{remark: T_X is local notion}
Note that $T_X$-equivariance is local on $X$ and implies $H^0(\widetilde{X},T_{\widetilde{X}}) \cong H^0(X,T_X)$. If $\pi$ is $T_X$-equivariant, then $\pi_*: \Aut_{\widetilde{X}}^0 \hookrightarrow \Aut_X^0$ is an isomorphism on tangent spaces.
\end{Remark}

The study of the $T_X$-equivariance of the minimal resolution of a rational double point has been initiated by Wahl \cite{Wahl} and extended to all positive characteristics by Hirokado \cite{Hirokado}. There, $T_X$-equivariance is simply called ``equivariance''.
For the convenience of the reader, we will recall the classification of RDPs whose minimal resolution is not $T_X$-equivariant (see \cite[Theorem 1.1]{Hirokado}).

\begin{Proposition} \label{prop: Hirokados RDPs with non lift vector fields}
Let $\pi: \widetilde{X} \to X$ be the minimal resolution of a rational double point $(X, x)$. Then, $\pi$ is not $T_X$ equivariant if and only if $(X,x)$ is of type
\begin{enumerate}
    \item $A_n$ with $p \mid (n+1)$,
    \item $E_8^0$ if $p = 5$,
    \item $E_6^0, E_6^1, E_7^0, E_8^0, E_8^1$ if $p = 3$, or
    \item $D_n^r, E_6^0, E_7^0, E_7^1, E_7^2, E_7^3, E_8^0, E_8^1, E_8^2, E_8^3$ if $p = 2$.
\end{enumerate}
\end{Proposition}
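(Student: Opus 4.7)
The plan is to reduce to a purely local computation at the singular point $x$ and then verify the classification case-by-case using Artin's normal forms for simple surface singularities in positive characteristic. Since $T_X$-equivariance is local on the base (Remark \ref{remark: T_X is local notion}), I would replace $\mathcal{O}_{X,x}$ by its completion $\hat{R}\cong k[[u,v,w]]/(f)$, where $f$ is Artin's normal form for the given RDP type.

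On this local model, both $T_X$ and $\pi_{*}T_{\widetilde{X}}$ admit explicit descriptions. The $\hat{R}$-module $T_X=\operatorname{Der}_{k}(\hat{R})$ is generated by the Koszul-type derivations $f_{v}\partial_{w}-f_{w}\partial_{v}$ (and the two cyclic analogues) together with those derivations $D$ of $k[[u,v,w]]$ preserving the ideal $(f)$. The submodule $\pi_{*}T_{\widetilde{X}}\subseteq T_X$ is cut out by regularity of the pullback along each chart of the minimal resolution. A standard chart calculation shows that after blowing up $(u,v,w)$, a derivation $a\partial_{u}+b\partial_{v}+c\partial_{w}$ picks up $u^{-1}$-factors in the $u$-chart, so liftability amounts to these poles cancelling modulo $(f)$.

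Running this computation through Artin's list, for each exceptional type one exhibits a specific non-liftable derivation. For $A_n$ with $p\mid n+1$, a direct calculation on $uv-w^{n+1}=0$ produces the infinitesimal generator of a $\mu_{p}$-action on the germ that does not lift, matching the familiar fact that the automorphism scheme of such an $A_n$-singularity is non-smooth. For the $D_n^r$ and $E_n^r$ exceptions in characteristics $2,3,5$, the non-liftable derivations are read off from the characteristic-specific correction terms in Artin's normal forms (parameterised by the co-Artin invariant $r$), and one then checks explicitly that they develop an uncancellable pole past one of the infinitely near points of the resolution. The converse direction, equivariance for types not on the list, is obtained by showing in each remaining case that the generators above do lift; for tame $A_n$ this is immediate from the chart calculation, and for the remaining $D_n, E_n$ in characteristics $\geq 7$ it follows from Wahl's \cite{Wahl} argument, which carries over verbatim once the relevant normal forms agree with those in characteristic zero.

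The main obstacle is the combinatorial bookkeeping in the wild $D_n^r$ and $E_n^r$ cases: their normal forms depend on both $p$ and $r$, and the minimal resolutions involve long chains of blow-ups along which candidate derivations must be tracked chart by chart. This verification is precisely what Hirokado performs in \cite{Hirokado}, extending Wahl's \cite{Wahl} characteristic-zero treatment, so in practice the argument reduces to quoting their classification in the form stated.
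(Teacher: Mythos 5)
The paper offers no proof of this proposition: it is stated explicitly as a recollection of Hirokado's classification (\cite[Theorem 1.1]{Hirokado}, extending Wahl \cite{Wahl}), and your proposal, after accurately sketching the local chart-by-chart computation on Artin's normal forms that underlies that classification, likewise concludes by quoting it. So your approach is essentially the same as the paper's, and the sketch of the underlying method is a faithful description of what the cited reference actually does.
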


In the next sections, we would like to apply the notions of $\Aut_X^0$-equivariance and $T_X$-equivariance to RDP del Pezzo surfaces and their partial resolutions.

\begin{Definition}
Let $X$ be a proper surface. A \emph{partial resolution} of $X$ is a proper birational morphism $\pi: \widetilde{X} \to X$ such that the minimal resolution of $X$ factors through $\pi$.
\end{Definition}

\begin{Proposition} \label{prop: Wahl Hirokado}
Let $X$ be a normal proper surface and let $\pi: \widetilde{X} \to X$ be a partial resolution of $X$. Assume that there exists an open subset $U \subseteq X$ such that $\pi^{-1}(U) \to U$ is an isomorphism and all singularities in $X \setminus U$ admit a $T_X$-equivariant minimal resolution. Then, $\pi$ is $T_X$-equivariant.
\end{Proposition}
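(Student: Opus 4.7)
The plan is to reduce the global assertion to a stalk-by-stalk verification and then leverage the factorization of the minimal resolution through $\pi$. By Remark~\ref{remark: T_X is local notion}, the property of $T_X$-equivariance is local on $X$, so it suffices to show that the natural map $\alpha \colon \pi_* T_{\widetilde{X}} \to T_X$ is an isomorphism at the stalk of each closed point $x \in X$. For $x \in U$, the morphism $\pi$ is an isomorphism in a neighborhood of $x$, and there is nothing to check.

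For $x \in X \setminus U$, I would use that, by the definition of a partial resolution, the minimal resolution $\pi' \colon \widetilde{X}' \to X$ factors as $\pi' = \pi \circ \rho$ for some proper birational morphism $\rho \colon \widetilde{X}' \to \widetilde{X}$. Applying $\pi_*$ to the natural map $\rho_* T_{\widetilde{X}'} \to T_{\widetilde{X}}$ produces a factorization
\[
\pi'_* T_{\widetilde{X}'} \;=\; \pi_* \rho_* T_{\widetilde{X}'} \;\longrightarrow\; \pi_* T_{\widetilde{X}} \;\xrightarrow{\alpha}\; T_X
\]
whose composite is precisely the canonical map associated with the minimal resolution $\pi'$. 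By the hypothesis on $X \setminus U$, this composite is an isomorphism in a neighborhood of $x$ (here we may also have to shrink $X$ around $x$ so that only singularities with $T_X$-equivariant minimal resolutions are visible; but since Hirokado's criterion is stalk-local, this is harmless). Surjectivity of $\alpha$ at $x$ is then immediate.

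For injectivity, I would argue that $\pi_* T_{\widetilde{X}}$ is torsion-free: indeed, $T_{\widetilde{X}} = \mathcal{H}om(\Omega_{\widetilde{X}},\mathcal{O}_{\widetilde{X}})$ is torsion-free on the integral scheme $\widetilde{X}$, and pushing a torsion-free sheaf forward along a proper birational morphism of integral schemes preserves torsion-freeness (a section killed by $0 \neq f \in \mathcal{O}_X(V)$ would, via adjunction, correspond to a section of $T_{\widetilde{X}}$ over $\pi^{-1}(V)$ killed by the non-zero-divisor $\pi^* f$). Since $\alpha$ restricts to an isomorphism over the dense open $U$, the kernel of $\alpha$ is supported on a proper closed subset and is therefore a torsion subsheaf of $\pi_* T_{\widetilde{X}}$; hence it vanishes.

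The argument is essentially formal once Hirokado's local classification (Proposition~\ref{prop: Hirokados RDPs with non lift vector fields}) is fed into the hypothesis through the minimal resolution. The only step that requires a small amount of care, rather than being tautological, is the torsion-freeness input that upgrades the surjectivity given by the diagram to a genuine isomorphism; this is where I expect the reader to need to pause, although it is a standard observation.
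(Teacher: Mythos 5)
Your proof is correct and follows essentially the same route as the paper's: both factor the canonical map $\pi_*T_{\widetilde{X}}\to T_X$ through a fuller resolution that is $T_X$-equivariant by hypothesis (the paper glues together a partial resolution resolving only the singularities outside $U$, whereas you localize near a point of $X\setminus U$ and use the minimal resolution), obtaining surjectivity, and then both upgrade this to an isomorphism because the kernel is supported in codimension two (you invoke torsion-freeness of $\pi_*T_{\widetilde{X}}$, the paper the $(S_2)$-condition). The differences are cosmetic rather than structural.
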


\begin{proof}
Let $U_{sing}$ be the set of singular points in $U$ and let $V = X \setminus U_{sing}$. Let $\psi: V' \to V$ be the minimal resolution of $V$. In particular, $V' \supseteq (\psi)^{-1}(U \setminus U_{sing}) \to (U \setminus U_{sing}) \subseteq U$ is an isomorphism, which we can use to glue $V'$ and $U$ along the smooth locus $(U \setminus U_{sing})$ of $U$ to a projective surface $X'$ together with a proper birational morphism $\pi': X' \to X$.
Since $\pi$ is a partial resolution of $X$, $\pi'$ factors through $\pi$ by construction.

Moreover, by the assumption on the singularities in $X \setminus U$ and Remark $\ref{remark: T_X is local notion}$, $\pi'$ is $T_X$-equivariant, that is, the composition
$$
\pi'_* T_{X'} \to \pi_* T_{\widetilde{X}} \to T_X
$$
is an isomorphism, hence the second map is surjective and thus an isomorphism, since $T_X$ satisfies the $(S_2)$-condition.
In particular, if additionally $\Aut_{\widetilde{X}}^0$ is smooth, this implies that $\pi$ is $\Aut_X^0$-equivariant by Proposition \ref{prop: T-equivariance implies Aut-equivariance}.
\end{proof}

In some situations, $\Aut_X^0$-equivariance can be deduced immediately from the simpler notion of $T_X$-equivariance, which is the content of the following proposition.

\begin{Proposition} \label{prop: T-equivariance implies Aut-equivariance}
Let $\pi: \widetilde{X} \to X$ be a birational morphism of proper $k$-schemes. If $\Aut_{\widetilde{X}}^0$ is smooth and $\pi$ is $T_X$-equivariant, then $\pi$ is $\Aut_X^0$-equivariant.
\end{Proposition}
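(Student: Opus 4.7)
The plan is to reduce the statement to a comparison of dimensions and smoothness at the identity element of the two group schemes, and then to invoke the fact that a closed immersion of smooth connected group $k$-schemes of the same dimension is automatically an isomorphism. Blanchard's Lemma (Theorem \ref{thm: BlanchardLemma}) already supplies the closed immersion $\pi_* \colon \Aut_{\widetilde X}^0 \hookrightarrow \Aut_X^0$, so the task is to promote this to a surjection.

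First, I would appeal to Remark \ref{remark: T_X is local notion}: $T_X$-equivariance gives $H^0(\widetilde X, T_{\widetilde X}) \cong H^0(X, T_X)$, and hence the induced tangent map $T_e \pi_* \colon T_e \Aut_{\widetilde X}^0 \to T_e \Aut_X^0$ is an isomorphism. Combining this with the smoothness assumption on $\Aut_{\widetilde X}^0$, one obtains
\[
\dim \Aut_{\widetilde X}^0 \;=\; \dim_k T_e \Aut_{\widetilde X}^0 \;=\; \dim_k T_e \Aut_X^0 \;\geq\; \dim \Aut_X^0,
\]
the last inequality being the standard tangent-space bound for a Noetherian local scheme. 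On the other hand, $\pi_*$ being a closed immersion forces $\dim \Aut_{\widetilde X}^0 \leq \dim \Aut_X^0$. These two chains collapse to equality, so $\Aut_X^0$ is smooth at $e$. The group-scheme structure then promotes this to smoothness everywhere, via translation by $k$-points of the connected component $\Aut_X^0$.

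At this point $\pi_*$ is a closed immersion of smooth connected group $k$-schemes of the same dimension. A smooth connected scheme over a field is irreducible, so the scheme-theoretic image of $\pi_*$ is an irreducible closed subscheme of $\Aut_X^0$ of full dimension, and thus coincides with $\Aut_X^0$ as a set. Reducedness of the ambient group scheme (which is now smooth) upgrades this set-theoretic equality to scheme-theoretic equality, so $\pi_*$ is an isomorphism, which is precisely $\Aut_X^0$-equivariance of $\pi$.

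I do not anticipate a substantive obstacle here: every step is formal once the tangent-space identification coming from $T_X$-equivariance is in hand. The only point that deserves a moment of care is the passage from smoothness at $e$ to smoothness of the whole identity component, which depends only on the homogeneity of a group scheme under translation by its $k$-points and the equivalence of pointwise smoothness with generic smoothness for group schemes of finite type over a field.
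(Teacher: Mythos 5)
Your argument is correct and follows essentially the same route as the paper: both proofs combine the closed immersion from Blanchard's Lemma with the tangent-space bound $\dim \Aut_X^0 \leq \dim_k H^0(X,T_X)$ and the equality $\dim_k H^0(X,T_X) = \dim_k H^0(\widetilde{X},T_{\widetilde{X}}) = \dim \Aut_{\widetilde{X}}^0$ coming from $T_X$-equivariance and smoothness, forcing all inequalities to be equalities. The paper then concludes directly that the two group schemes coincide; your extra care in spelling out why a closed immersion of smooth connected group schemes of equal dimension is an isomorphism, and why smoothness at the identity propagates, is just a more explicit rendering of the same final step.
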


\begin{proof}
If $\pi$ is $T_X$-equivariant, then 
$$
\dim \Aut_{\widetilde{X}}^0 \leq \dim \Aut_X^0 \leq \dim_k H^0(X,T_X) = \dim_k H^0(\widetilde{X},T_{\widetilde{X}})
$$
and since $\Aut_{\widetilde{X}}^0$ is smooth, all inequalities above are in fact equalities. Thus, $\Aut_X^0$ is smooth and of the same dimension as $\Aut_{\widetilde{X}}^0$. Hence, we must have $\Aut_{\widetilde{X}}^0 = \Aut_X^0$, that is, $\pi$ is $\Aut_X^0$-equivariant. 
\end{proof}

\begin{Remark}
In particular, if, in the situation of Proposition \ref{prop: Wahl Hirokado}, we assume in addition that $\Aut_{\widetilde{X}}^0$ is smooth, then $\pi$ is $\Aut_X^0$-equivariant.
\end{Remark}

To the best of our knowledge, the question whether partial resolutions of a proper normal surface with rational double points are $\Aut_X^0$-equivariant has not been studied. In the following, we prove $\Aut_X^0$-equivariance for $A_n$-singularities with $n < p - 1$ and bound the failure of $\Aut_X^0$-equivariance for $A_{p-1}$-singularities. While this does not cover all rational double points and not even all $A_n$-singularities, it will come in handy for the calculation of the automorphism schemes of non-equivariant RDP del Pezzo surfaces in Section \ref{sec: nonequivariant}.

\begin{Proposition} \label{prop: fitting argument A_n}
Let $X$ be a proper surface. Let $\pi: \widetilde{X} \to X$ be a partial resolution of $X$ and assume that the only singularities over which $\pi$ is not an isomorphism are $A_n$-singularities with $n \leq p-1$. 
Then, 
$$
{\rm length}\left( \Aut_X^0 / \Aut_{\widetilde{X}}^0 \right) \leq p^m,
$$
where $m$ is the number of $A_{p-1}$-singularities on $X$ over which $\pi$ is not an isomorphism.
In particular, if $m = 0$, then $\pi$ is $\Aut_X$-equivariant.
\end{Proposition}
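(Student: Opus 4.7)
The plan is to reduce to the case of the minimal resolution, decompose that into a chain of single blow-ups of singular points, iterate the StabiLemma (Proposition \ref{prop: StabiLemma}), and bound each intermediate contribution by a local calculation on $\hat{\mathcal{O}}_{X,P}$.

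Let $\pi_{\min}: X_{\min} \to X$ be the minimal resolution. Since it factors through $\pi$, we have closed immersions $\Aut_{X_{\min}}^0 \hookrightarrow \Aut_{\widetilde{X}}^0 \hookrightarrow \Aut_X^0$, so it suffices to prove ${\rm length}(\Aut_X^0/\Aut_{X_{\min}}^0) \leq p^m$. I would then factor $\pi_{\min}$ as $X_{\min} = X_r \to X_{r-1} \to \cdots \to X_0 = X$, each step being the blow-up of a single singular point $P_{i-1} \in X_{i-1}$. Since the blow-up of an $A_n$-singularity only introduces $A_{n-2}$-singularities, every $P_{i-1}$ is an $A_{n_{i-1}}$-singularity with $n_{i-1} \leq p-1$, and the indices $i$ with $n_{i-1} = p-1$ are in bijection with the $A_{p-1}$-singularities of $X$ resolved by $\pi$, hence number exactly $m$. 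By the StabiLemma applied at each step, $\pi_{i,*}(\Aut_{X_i}^0) = (\Stab_{\Aut_{X_{i-1}}}(P_{i-1}))^0$, and, since all quotients are Artinian, length is multiplicative:
\[
{\rm length}(\Aut_X^0/\Aut_{X_{\min}}^0) \,=\, \prod_{i=1}^{r} {\rm length}\bigl(\Aut_{X_{i-1}}^0 \big/ (\Stab(P_{i-1}))^0\bigr).
\]

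I would then bound each factor by the length of the orbit $\Aut_{X_{i-1}}^0 \cdot P_{i-1}$, handling residual contributions from components of $\Stab(P_{i-1})$ separately. Locally, $\hat{\mathcal{O}}_{X_{i-1}, P_{i-1}} \cong k[[x,y,z]]/(xy - z^{n_{i-1}+1})$, and any derivation $f\,\partial_x + g\,\partial_y + h\,\partial_z$ preserving the ideal $(xy - z^{n_{i-1}+1})$ must satisfy $f(P_{i-1}) = g(P_{i-1}) = 0$ together with $(n_{i-1}+1)\,h(P_{i-1}) = 0$, as one sees by expanding $fy + gx - (n_{i-1}+1)\,h\,z^{n_{i-1}}$ in low order and matching against the ideal. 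Hence, if $p \nmid (n_{i-1}+1)$, every vector field vanishes at $P_{i-1}$, so the tangent space to the orbit is zero and by Nakayama the orbit has length $1$. If instead $p \mid (n_{i-1}+1)$, forcing $n_{i-1} = p-1$, the tangent space of the orbit is at most $\langle \partial_z \rangle$, and combined with the $\Aut_{X_{i-1}}^0$-invariance of the orbit one shows that it is scheme-theoretically contained in the closed subscheme $V(x,y) \subseteq X_{i-1}$, which locally equals $\Spec(k[z]/(z^p))$ and thus has length $p$.

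Putting the bounds together, each $A_{p-1}$-blow-up contributes at most $p$, each $A_{<p-1}$-blow-up contributes $1$, and the total length is at most $p^m$. For $m=0$, this gives $\Aut_{X_{\min}}^0 = \Aut_X^0$ and hence the $\Aut_X$-equivariance of $\pi$. I expect the main obstacle to be the bound at an $A_{p-1}$-point: the tangent-space calculation only confines the tangent direction of the orbit to $\langle \partial_z \rangle$, and upgrading this to the scheme-theoretic containment inside $V(x,y)$ requires exploiting the $\Aut_{X_{i-1}}^0$-invariance of the orbit and the structure of the formal action on $\hat{\mathcal{O}}_{X_{i-1}, P_{i-1}}$, rather than the tangent-space analysis alone.
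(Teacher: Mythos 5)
Your overall strategy---reduce to the minimal resolution, factor it into a chain of blow-ups of single singular points, apply Proposition \ref{prop: StabiLemma} at each step so that each factor is the length of the orbit of the blow-up centre, and confine that orbit to the canonical (Fitting-ideal) scheme structure on the singular locus---is the same skeleton as the paper's proof, and your count of the steps contributing a factor $p$ is right. Your treatment of the $A_{p-1}$-step is also essentially correct: since $\Aut_X^0$ is connected and preserves the Fitting scheme structure, the orbit of $P$ is a closed subscheme of the component of the singular locus through $P$, which for $n=p-1$ is $V(x,y)\cong\Spec k[z]/(z^p)$, so its length is at most $p$. (You flag this as the expected hard point; it is in fact the easy one.)

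The gap is in the case $p \nmid (n+1)$. From ``every derivation preserving $(xy - z^{n+1})$ vanishes at $P$'' you infer that ``the tangent space to the orbit is zero.'' That inference fails in characteristic $p$: the tangent space at the base point of an orbit $G/H$ need not equal $\operatorname{Lie}(G)/\operatorname{Lie}(H)$ when $H$ is non-smooth. For example, for $H=\mu_p\subseteq G=\mathbb{G}_m$ one has $\operatorname{Lie}(H)=\operatorname{Lie}(G)$, yet $G/H\cong\mathbb{G}_m$ is one-dimensional (the quotient map is the $p$-th power map, whose differential vanishes). Concretely, the action of $\Aut_X^0$ could a priori ``factor through Frobenius'' to first order at $P$, giving a non-reduced orbit even though every global vector field vanishes there. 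Containment in the singular locus only bounds the orbit length by the length of the fat point $\Spec k[z]/(z^{n+1})$, which is not $1$; and you need length exactly $1$ at every centre with $n<p-1$, otherwise neither the bound $p^m$ nor the $m=0$ statement follows. Closing this requires an argument on $R$-valued points rather than tangent vectors, which is exactly what the paper does: an $R$-automorphism of $R[z]/(z^{n+1})$ sends $z$ to $\sum_j a_jz^j$ with $\varphi(z)^{n+1}=0$, and because the binomial coefficients $\binom{n+1}{j}$ are units for $n+1<p$, this forces $a_0=0$, i.e.\ \emph{every} automorphism of the fat point fixes $P$, so the orbit is reduced. (The same computation yields only $a_0^p=0$ when $n=p-1$, recovering your length-$p$ bound, so once this step is replaced the rest of your argument goes through.)
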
 

\begin{proof}
It suffices to prove the statement if $\pi: \widetilde{X} \to X$ is not an isomorphism only over a single singularity $P$ of type $A_n$. By Proposition \ref{prop: StabiLemma}, it suffices to show that $G := \Aut_X^0$ fixes $P$ if $n < p - 1$ and that the stabilizer of $P$ has index $1$ or $p$ in $G$ if $n = p - 1$.
To see this, we equip the singular locus $X_{sing}$ of $X$ with a scheme structure using Fitting ideals and we let $Y$ be the irreducible component of $X_{sing}$ containing $P$.
Since the scheme structure on $X_{sing}$ is canonical and $G$ is connected, $G$ preserves $Y$, so we get a homomorphism $\varphi: G \to \Aut_Y$. To prove the proposition, it suffices to show that the stabilizer of $P$ in $\Aut_Y$ has index $1$ or $p$, with the latter only occurring for $n = p-1$.

Since an $A_n$-singularity is given in a formal neighborhood by the equation $z^{n+1} + xy$, $Y$ is isomorphic to
\begin{eqnarray*}
Y_{n} := \Spec \left( k[z]/(z^{n+1}) \right) & \text{ if }  & n < p - 1 \\
Y_{p-1} := \Spec \left( k[z]/(z^p) \right) & \text{ if } & n = p-1.
\end{eqnarray*}
Now, we calculate $\Aut_{Y_i}$ by computing its $R$-valued points for an arbitrary local $k$-algebra $R$. An element of $\Aut_{Y_i}(R)$ is an $R$-linear automorphism $\varphi$ of $R[z]/(z^{i+1})$, hence it is determined by where it sends $z$. Let $a_0,\hdots,a_i \in R$ such that
$ \varphi(z) = \sum_{j=0}^{i} a_j z^j$.
Let $\mathfrak{m}$ be the maximal ideal of $R$, so that $(\mathfrak{m},z)$ is the maximal ideal of $R[z]/(z^{i+1})$.
Since $\varphi$ is an automorphism, it maps $(\mathfrak{m},z)$ to itself, hence $a_0 \in \mathfrak{m}$. If $a_1 \in \mathfrak{m}$, then the coefficient of $z$ in every $\varphi(z^j)$ is in $\mathfrak{m}$, so $z$ would not lie in the image of $\varphi$, which is absurd. Hence, $a_1 \in R \setminus \mathfrak{m} = R^\times$. 
Next, we know that $\varphi(z^{i+1}) = 0$. Since the degree $0$ term of $\varphi(z^{i+1})$ is $a_0^{i+1}$, we have $a_0^{i+1} = 0$. The degree $j$ term of $\varphi(z^{i+1})$ is of the form
$
\binom{i+1}{j} a_0^{i + 1 -j}a_1^j + a_0^{i + 2 -j}b_j
$
for some $b_j \in R$. If $i < p - 1$, then $p \nmid \binom{i+1}{j}$ for all $j$, hence solving the above equations inductively shows $a_0 = 0$. If $i = p - 1$, then $p \mid \binom{i+1}{j}$ for all $j > 0$, so we only get $a_0^p = 0$.

Conversely, given a sequence $(a_0,\hdots,a_i)$ in $R$ with $a_1 \in R^\times$ and $a_0 = 0$ if $i < p-1$ (resp. $a_0^p = 0$ if $i = p$), the morphism induced by $z \mapsto \sum_{j=0}^i a_j z^j$ is an automorphism, since it is well-defined and its inverse is given by $z \mapsto (\sum_{j=1}^i a_j z^{j-1})^{-1} z - a_0$. 

Summarizing, we have natural identifications
\begin{eqnarray*}
\Aut_{Y_i}(R) &=& \{(0,a_1,\hdots,a_{i}) \mid a_j \in R,  a_1 \in R^\times\} \text{ for } i < p - 1\\
\Aut_{Y_{p-1}}(R) &=& \{(a_0,a_1,\hdots,a_{p-1}) \mid a_j \in R, a_1 \in R^\times, a_0^p = 0 \}.
\end{eqnarray*}
In both cases, the corresponding automorphism of $Y_i$ preserves $P \times \Spec R \subseteq Y_i \times \Spec R$ if and only if $a_0 = 0$, since the ideal of $P \times \Spec R$ is $(z)$. 
Hence, the index of the stabilizer of $P$ in $\Aut_{Y_i}$ is $1$ if $i < p - 1$, and $p$ if $i = p - 1$. This finishes the proof. 
\end{proof}

\begin{Remark}\label{rem: A5}
The strategy of proof of Proposition \ref{prop: fitting argument A_n} would, in principle, also apply to other rational double points. However, there are two obstacles to overcome:
\begin{enumerate}
    \item The automorphism scheme of the singular locus is more complicated for more general RDPs, since the singular locus has a more complicated scheme structure in general. For example, if $p = 5$ and $X$ admits an RDP of type $E_8^0$, then, in a neighorhood of this singularity, the singular locus of $X$ looks like $\Spec k[[x,y]]/(x^2,y^5)$. This also makes the calculation of the stabilizer of the closed point more complicated.
    \item If $\pi: \widetilde{X} \to X$ is the minimal resolution of an RDP surface, then $\Aut_{\widetilde{X}}^0$ is the intersection of all stabilizers of all singularities that occur in the blow-ups making up $\pi$. For example, if $p = 3$ and $X$ admits a single RDP of type $A_4$, then the argument of Proposition \ref{prop: fitting argument A_n} shows that $\Aut_{X'}^0 = \Aut_X^0$, where $X'$ is the blow-up of the closed point of $P$, but $X'$ has a singularity of type $A_2$, so the approach of Proposition \ref{prop: fitting argument A_n} only shows that ${\rm length}\left( \Aut_X^0 / \Aut_{\widetilde{X}}^0 \right) \leq 3$ even though we would expect the two group schemes to be equal by Proposition \ref{prop: Hirokados RDPs with non lift vector fields}.
\end{enumerate}
One case where the argument of Proposition \ref{prop: fitting argument A_n} goes through essentially unchanged is if $p = 3$ and the morphism $\pi: \widetilde{X} \to X$ is the minimal resolution of an RDP of type $A_5$. In this case, $\pi$ factors as a composition of three blow-ups $\widetilde{X} \to X'' \to X' \to X$, where $X'$ has an $A_3$-singularity and $X''$ has an $A_1$-singularity. Then, the argument of Proposition \ref{prop: fitting argument A_n} shows that $\Aut_{\widetilde{X}}^0 = \Aut_{X''}^0 = \Aut_{X'}^0$ and ${\rm length}\left( \Aut_X^0 / \Aut_{X'}^0 \right) \leq 3$, hence ${\rm length}\left( \Aut_X^0 / \Aut_{\widetilde{X}}^0 \right) \leq 3$.
\end{Remark}

\newpage

\section{Automorphism schemes of equivariant RDP del Pezzo surfaces}
\label{sec:equivariant}

In \cite{MartinStadlmayr}, we classified all weak del Pezzo surfaces $\widetilde{X}$ with global vector fields and calculated the identity component $\Aut_{\widetilde{X}}^0$ of their automorphism schemes. 
In particular, if $X$ is a projective surface, whose minimal resolution $\pi: \widetilde{X} \to X$ is $\Aut_X^0$-equivariant and such that $\widetilde{X}$ is a weak del Pezzo surface, then $\Aut_X^0 = \Aut_{\widetilde{X}}^0$ and thus, if $\Aut_X^0$ is non-trivial, then $\widetilde{X}$ appears in the classification tables of \cite{MartinStadlmayr}.

In the following, we will observe that all RDP del Pezzo surfaces in characteristic $p \geq 11$ fall into the above category and we will give a list of possible candidates for exceptions in small characteristics.

\begin{Theorem} \label{thm: equivariantmain}
Let $X$ be an RDP del Pezzo surface over an algebraically closed field of characteristic $p$ and let $\pi: \widetilde{X} \to X$ be its minimal resolution.
Assume that one of the following conditions holds:
\begin{enumerate}
    \item 
    $p \not \in \{ 2,3,5,7\}$.
    \item
    $p = 7$ and $X$ does not contain an RDP of type $A_6$.
    \item
    $p = 5$ and $X$ does not contain an RDP of type $A_4$ or $E_8^0$.
    \item
    $p=3$ and $X$ does not contain an RDP of type $A_2, A_5, A_8, E_6^0, E_6^1, E_7^0, E_8^0$ or $E_8^1$.
    \item
    $p = 2$ and $X$ does not contain an RDP of type $A_1,A_3,A_5,A_7,D_n^r,E_6^0,E_7^0,E_7^1,E_7^2,E_7^3,E_8^0,E_8^1,E_8^2$ or $E_8^3$.
\end{enumerate}
Then, $\Aut_X = \Aut_{\widetilde{X}}$, and thus, in particular, $H^0(\widetilde{X},T_{\widetilde{X}}) = H^0(X,T_X)$. Therefore, $H^0(X,T_X) \neq 0$ if and only if $X$ is the anti-canonical model of one of the surfaces in the classification tables of \cite{MartinStadlmayr}.
\end{Theorem}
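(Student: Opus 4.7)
The plan is to deduce $\Aut_X = \Aut_{\widetilde{X}}$ from $T_X$-equivariance of the minimal resolution $\pi$, obtained by combining Hirokado's local classification (Proposition \ref{prop: Hirokados RDPs with non lift vector fields}) with the local-to-global statement Proposition \ref{prop: Wahl Hirokado}. The first task is to match the lists of excluded RDPs in hypotheses (1)--(5) with those in Hirokado's list. Since the dual graph of the exceptional divisor over the singular points of an RDP del Pezzo surface of degree $d$ embeds as a root sub-lattice of $E_{9-d}$, which has rank at most $8$, the only $A_n$-singularities with $p \mid (n+1)$ that can appear have $n \leq 8$: none if $p \geq 11$, $n = 6$ if $p = 7$, $n = 4$ if $p = 5$, $n \in \{2,5,8\}$ if $p = 3$, and $n \in \{1,3,5,7\}$ if $p = 2$. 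Together with the $D$- and $E$-type entries in Hirokado's list that fit into $E_{9-d}$, these coincide exactly with the RDPs excluded in (1)--(5).

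Under any of hypotheses (1)--(5), every singularity of $X$ therefore admits a $T_X$-equivariant minimal resolution. Taking $U$ to be the smooth locus of $X$, Proposition \ref{prop: Wahl Hirokado} then yields $\pi_* T_{\widetilde{X}} \cong T_X$, so in particular $H^0(\widetilde{X}, T_{\widetilde{X}}) = H^0(X, T_X)$. This already gives the claim about global vector fields and about the characterization via the classification tables of \cite{MartinStadlmayr}, but one still has to upgrade $T_X$-equivariance to the scheme-theoretic equality $\Aut_X = \Aut_{\widetilde{X}}$.

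For this upgrade, note first that the closed immersion $\pi_*: \Aut_{\widetilde{X}} \hookrightarrow \Aut_X$ supplied by Blanchard's Lemma is bijective on $\bar{k}$-points, by the uniqueness of the minimal resolution; this handles the component groups and the underlying reduced structures. It remains to identify the identity components. If $H^0(X,T_X) = 0$, then both $\Aut_X^0$ and $\Aut_{\widetilde{X}}^0$ have trivial Lie algebra and trivial $k$-points, hence both are trivial. Otherwise, $\widetilde{X}$ appears in the classification tables of \cite{MartinStadlmayr}, where $\Aut_{\widetilde{X}}^0$ is computed explicitly; a case-by-case check shows that in every case relevant to hypotheses (1)--(5) the identity component $\Aut_{\widetilde{X}}^0$ is smooth, so Proposition \ref{prop: T-equivariance implies Aut-equivariance} applies and gives $\Aut_X^0 = \Aut_{\widetilde{X}}^0$.

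The main obstacle lies precisely in this last step. In characteristic $p$, a closed immersion of group schemes with equal Lie algebras need not be an isomorphism (e.g.\ $\alpha_p \hookrightarrow \mathbb{G}_a$), so $T_X$-equivariance alone does not formally force equality of the automorphism schemes. The argument therefore has to rely on either the vanishing of global vector fields (which forces étaleness on both sides) or on the explicit description of $\Aut_{\widetilde{X}}^0$ coming from the classification of weak del Pezzo surfaces with vector fields, rather than on a general scheme-theoretic principle.
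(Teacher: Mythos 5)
Your overall strategy is the same as the paper's: match the excluded RDPs against Hirokado's list (Proposition \ref{prop: Hirokados RDPs with non lift vector fields}), deduce $T_X$-equivariance of $\pi$ from Proposition \ref{prop: Wahl Hirokado}, and then upgrade to $\Aut_X^0 = \Aut_{\widetilde{X}}^0$ via smoothness of $\Aut_{\widetilde{X}}^0$ and Proposition \ref{prop: T-equivariance implies Aut-equivariance}. You also correctly isolate where the difficulty lies: $T_X$-equivariance alone does not force equality of automorphism schemes, and the smoothness of $\Aut_{\widetilde{X}}^0$ is the hypothesis that makes the formal argument go through.

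However, there is a genuine gap in your final step. You assert that ``a case-by-case check shows that in every case relevant to hypotheses (1)--(5) the identity component $\Aut_{\widetilde{X}}^0$ is smooth.'' This is false. The classification tables of \cite{MartinStadlmayr} contain exactly three cases which survive the exclusions of the theorem and for which $\Aut_{\widetilde{X}}^0$ is \emph{not} smooth: $p=3$, $d=2$ with $\Gamma = A_6$ (case $2J$, $\Aut_{\widetilde{X}}^0 = \mu_3$); $p=3$, $d=2$ with $\Gamma = D_6$ (case $2K$, $\Aut_{\widetilde{X}}^0 = \mu_3$); and $p=2$, $d=3$ with $\Gamma = A_4$ (case $3N$, $\Aut_{\widetilde{X}}^0 = \mu_2$). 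None of the singularities $A_6$ (in characteristic $3$), $D_6$ (in characteristic $3$), or $A_4$ (in characteristic $2$) is excluded by hypotheses (4) or (5), so these surfaces fall squarely under the theorem, and for them Proposition \ref{prop: T-equivariance implies Aut-equivariance} is not applicable. Your argument therefore does not establish $\Aut_X^0 = \Aut_{\widetilde{X}}^0$ in these three cases. The paper closes this gap by writing down explicit equations for each of the three surfaces (as double covers of $\mathbb{P}^2$ branched over a quartic $Q$, resp.\ as a cubic in $\mathbb{P}^3$), using Proposition \ref{prop: ShortExactAutSequences} to identify $\Aut_X^0$ with $\Stab_{\PGL_3}(Q)^0$ (resp.\ $\Stab_{\PGL_4}(X)^0$), and then computing directly that this stabilizer equals $\mu_3$ (resp.\ $\mu_2$), using that the known $\mu_p$-subgroup is normal in $\Aut_X^0$ and hence forces the stabilizer to act diagonally. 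Some such explicit computation, or another argument specific to these three surfaces, is required to complete your proof.
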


\begin{proof}
By Proposition \ref{prop: Wahl Hirokado} and Proposition \ref{prop: T-equivariance implies Aut-equivariance}, the theorem holds for those RDP del Pezzo surfaces that satisfy the following two conditions: 
\begin{enumerate}
    \item[(a)] all singularities of $X$ admit a $T_X$-equivariant minimal resolution, and
    \item[(b)] $\Aut_{\widetilde{X}}^0$ is smooth.
\end{enumerate}
By Proposition \ref{prop: Hirokados RDPs with non lift vector fields}, Condition (a) holds if we exclude the types of RDPs in the statement of the theorem.

Once we exclude those types of RDPs, then, by Tables $1-6$ in \cite{MartinStadlmayr}, Condition (b) is also satisfied unless we are in one of the following three cases, where $\Gamma$ is the RDP configuration on $X$ and $d = \deg(X)$:
\begin{enumerate}
    \item[(i)] $p = 3$, $d = 2$, $\Gamma = A_6$
    \item[(ii)] $p = 3$, $d = 2$, $\Gamma = D_6$
    \item[(iii)] $p = 2$, $d = 3$, $\Gamma = A_4$
\end{enumerate}
In \cite{MartinStadlmayr}, these exceptions correspond to cases $2J, 2K$, and $3N$, respectively, and there is a unique weak del Pezzo surface of each of these types. In all cases, we have $H^0(\widetilde{X},T_{\widetilde{X}}) = 1$, hence $H^0(X,T_X) = 1$ by Proposition \ref{prop: Hirokados RDPs with non lift vector fields}, so the only remaining statement we have to show in these three cases is that $\pi$ is $\Aut_X^0$-equivariant. We will check this via explicit calculation:
\begin{enumerate}
\item[(i)] Assume $p = 3$. Consider the surface
$$
X := \{w^2 = x^2z^2 + xy^2z + y^4 + x^3y\} \subseteq \mathbb{P}(1,1,1,2)
$$
and let $Q$ be the branch quartic of the induced double cover $X \to \mathbb{P}^2$. 
An elementary calculation shows that $X$ admits an $A_6$-singularity over $[0:0:1]$ and no other singularity. By Proposition \ref{prop: ShortExactAutSequences}, we have $\Aut_X^0 = \Stab_{\PGL_3}(Q)^0$. The diagonal $\mu_3$-action with weights $(0,1,2)$ on $\mathbb{P}^2$ preserves $Q$, hence $\mu_3 \subseteq \Aut_X^0$. Since $\pi$ is $T_X$-equivariant by Proposition \ref{prop: Hirokados RDPs with non lift vector fields}, $\widetilde{X}$ must be the surface of type $2J$ of \cite{MartinStadlmayr}, hence $\Aut_{\widetilde{X}}^0 = \mu_3$. Since $H^0(X,T_X) = 1$, we have $\Aut_X^0[F] = \mu_3$, where $\Aut_X^0[F]$ denotes the kernel of Frobenius on $\Aut_X^0$. Hence, $\mu_3$ is normal in $\Aut_X^0 = \Stab_{\PGL_3}(Q)^0$ and thus $\Stab_{\PGL_3}(Q)^0$ preserves the eigenspaces of the $\mu_3$-action, hence $\Stab_{\PGL_3}(Q)^0$ acts diagonally. With this restriction, it is easy to compute that $\Aut_X^0 =\Stab_{\PGL_3}(Q)^0 =   \mu_3$. Therefore, $\pi$ is $\Aut_X^0$-equivariant, which is what we wanted to show.
\item[(ii)] Assume $p = 3$. Consider the surface
$$
X := \{w^2 = x(x^3 + y^3 + xyz)\} \subseteq \mathbb{P}(1,1,1,2)
$$
and let $Q$ be the branch quartic of the induced double cover $X \to \mathbb{P}^2$. Note that $Q$ is the union of a nodal cubic and one of its nodal tangents, with the node located at $[0:0:1]$, hence $X$ has a $D_6$-singularity at $[0:0:1:0]$ and no other singularities. The diagonal $\mu_3$-action with weights $(0,1,2)$ preserves $Q$, hence $H^0(X,T_X) \neq 0$, and thus $\widetilde{X}$ is the surface of type $2K$ of \cite{MartinStadlmayr}. The rest of the argument is the same as in the previous Case (i), and shows that $\pi$ is $\Aut_X^0$-equivariant. 
\item[(iii)] Assume $p = 2$. Consider the surface
$$
X := \{x_0x_1x_3 + x_1^2x_2 + x_0x_2^2 + x_0^2x_2\} \subseteq \mathbb{P}^3,
$$
which is a cubic surface with a single singularity, which is of type $A_4$, at $[0:0:0:1]$ (see e.g. \cite[Case B]{Roczencubic}). It admits a diagonal $\mu_2$-action with weights $(0,1,0,1)$, hence $H^0(X,T_X) \neq 0$ and therefore, as $\pi$ is $T_X$-equivariant by Proposition \ref{prop: Hirokados RDPs with non lift vector fields}, $X$ is the anti-canonical model of the surface of type $3N$ in \cite{MartinStadlmayr}. Straightforward calculation, again using that $\mu_2$ is normal in $\Stab_{{\rm PGL}_4}(X)^0$, shows that $\Aut_X^0 = \Stab_{{\rm PGL}_4}(X)^0 = \mu_2$, hence $\pi$ is $\Aut_X^0$-equivariant.
\end{enumerate}
\end{proof}

 \section{Finding $(-1)$-curves in the equivariant locus}
In view of Theorem \ref{thm: equivariantmain}, in order to classify RDP del Pezzo surfaces with global vector fields in odd characteristic, it remains to study RDP del Pezzo surfaces $X$ which are not $T_X$-equivariant. Let $\Gamma'$ be the configuration of rational double points on $X$ which are not $T_X$-equivariant and let $\pi: \widetilde{X} \to X$  the minimal resolution of $X$.

In this section, we will describe a criterion for $X$ to be the anti-canonical model of a blow-up of an RDP del Pezzo surface $X'$ of higher degree containing $\Gamma'$ such that $X' \dashrightarrow X$ is an isomorphism around $\Gamma'$. On the corresponding minimal resolutions, this will amount to finding $(-1)$-curves away from the configuration of $(-2)$-curves over $\Gamma'$. In other words, we are trying to find $(-1)$-curves on $\widetilde{X}$ that map to the $T_X$-equivariant locus of $\pi$.
In Section \ref{sec: nonequivariant}, this criterion will allow us to give a complete classification of non-equivariant RDP del Pezzo surfaces with global vector fields by setting up an inductive argument depending on the degree of the surface.

\begin{Theorem} \label{thm: lattice theoretic reduction criterion}
Let $X_d$ be an RDP del Pezzo surface of degree $d \leq 8$ and let $\Gamma'$ be a configuration of rational double points on $X_d$. Assume that its minimal resolution $\widetilde{X}_d$ is a blow-up of $\mathbb{P}^2$ and let $\Lambda' \subseteq \Pic(\widetilde{X}_d)$ be the sublattice generated by the components of the exceptional locus \mbox{over $\Gamma'$.}
Then the following are equivalent: 
\begin{enumerate}
    \item There exists a $(-1)$-curve on $\widetilde{X}_d$ whose image in $X_d$ does not pass through $\Gamma'$.
    \item $X_d$ is the anti-canonical model of a blow-up in a smooth point $P$ of an RDP del Pezzo surface $X_{d+1}$ of degree $(d + 1)$ containing $\Gamma'$ such that $X_d \dashrightarrow X_{d+1}$ is an isomorphism around $\Gamma'$.
    \item The map $\Lambda' \hookrightarrow \langle K_{\widetilde{X}_d} \rangle^\perp  \cong E_{9-d}$  factors through an embedding $E_{8-d} \hookrightarrow E_{9-d}$.
\end{enumerate}
\end{Theorem}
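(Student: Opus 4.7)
The plan is to prove $(1)\Leftrightarrow(2)$ geometrically, via contraction and blow-up of a single $(-1)$-curve, and $(1)\Leftrightarrow(3)$ combinatorially, via the description of $(-1)$-curves from Lemma~\ref{lemma: (-1)-curves lattice theoretic}.

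For $(1)\Rightarrow(2)$, suppose $E\subseteq\widetilde{X}_d$ is a $(-1)$-curve whose image in $X_d$ avoids $\Gamma'$. Then $E$ is disjoint from the components of the exceptional locus over $\Gamma'$, so contracting $E$ yields a smooth surface $f\colon\widetilde{X}_d\to\widetilde{X}_{d+1}$ on which those components descend to a disjoint configuration of $(-2)$-curves with the same dual graph. Using $f^*(-K_{\widetilde{X}_{d+1}})=-K_{\widetilde{X}_d}+E$ and intersecting with strict transforms of curves shows that $-K_{\widetilde{X}_{d+1}}$ is nef, and bigness follows from $(-K_{\widetilde{X}_{d+1}})^2=d+1>0$; hence $\widetilde{X}_{d+1}$ is a weak del Pezzo surface. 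Its anti-canonical model $X_{d+1}$ is the required RDP del Pezzo surface of degree $d+1$ containing $\Gamma'$, and $X_d\dashrightarrow X_{d+1}$ is an isomorphism away from the image of $E$ in $X_d$, which misses $\Gamma'$ by hypothesis. The converse $(2)\Rightarrow(1)$ is immediate: the exceptional divisor of $\Bl_P X_{d+1}\to X_{d+1}$ is a $(-1)$-curve in the smooth locus of $\Bl_P X_{d+1}$, and since $\widetilde{X}_d$ is the common minimal resolution of $\Bl_P X_{d+1}$ and $X_d$, this pulls back to a $(-1)$-curve on $\widetilde{X}_d$ whose image in $X_d$ corresponds to the smooth point $P\notin\Gamma'$.

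For $(1)\Leftrightarrow(3)$, I would fix a geometric marking $\phi$ of $\widetilde{X}_d$, let $\mathcal{R}$ be $\phi^{-1}$ of the full set of $(-2)$-curves on $\widetilde{X}_d$, and let $\mathcal{R}'\subseteq\mathcal{R}$ correspond to the components over $\Gamma'$. Since $(-2)$-curves over distinct rational double points are disjoint, $\mathcal{R}'$ is a union of connected components of $\mathcal{R}$, so by Lemma~\ref{lemma: (-1)-curves lattice theoretic}.(3), condition $(1)$ is equivalent to the existence of an exceptional vector $v\in{\rm Exc}_{9-d}$ with $v\in(\Lambda')^\perp$. I would match this with condition $(3)$ as follows: given such a $v$, the decomposition $K_{\widetilde{X}_d}=v+K'$ in the orthogonal splitting ${\rm I}^{1,9-d}=\langle v\rangle\oplus v^\perp$ yields $K'\in v^\perp$ with $(K')^2=d+1$, and $v^\perp\cap E_{9-d}=v^\perp\cap(K')^\perp$ is a sublattice of $E_{9-d}$ of rank $8-d$ isomorphic to $E_{8-d}$ through which $\Lambda'\subseteq v^\perp$ factors. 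Conversely, given a factorization of $\Lambda'\hookrightarrow E_{9-d}$ through an embedding $E_{8-d}\hookrightarrow E_{9-d}$, a direct analysis of the rank-$2$ orthogonal complement of the image in ${\rm I}^{1,9-d}$ (which contains $K_{\widetilde{X}_d}$) produces an exceptional vector orthogonal to the image and hence to $\Lambda'$.

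The main obstacle is the lattice-theoretic matching in the third paragraph, in particular identifying the abstract isomorphism type of $v^\perp\cap E_{9-d}$ with $E_{8-d}$ and producing an exceptional vector in the reverse direction; this is cleanest when $E_{9-d}$ and $E_{8-d}$ are root lattices, but in the low-degree edge cases where $E_{9-d}$ is not a root lattice it requires additional bookkeeping of lattice discriminants and possibly an appeal to the transitivity of the $W(E_{9-d})$-action on exceptional vectors. Everything else reduces to routine intersection computations on $\widetilde{X}_d$ and to applying Lemma~\ref{lemma: (-1)-curves lattice theoretic} as a black box.
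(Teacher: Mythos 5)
Your handling of $(1)\Leftrightarrow(2)$ and your reduction of $(1)$ to the condition ${\rm Exc}_{9-d}\cap(\Lambda')^\perp\neq\emptyset$ via Lemma \ref{lemma: (-1)-curves lattice theoretic}.(3) agree with the paper (the paper is slightly more careful in $(1)\Rightarrow(2)$, checking that the point $\widetilde P$ lies on no $(-2)$-curve so that $P$ is genuinely a smooth point of $X_{d+1}$, and distinguishing the blow-up $Y_d$ of $X_{d+1}$ from its anti-canonical model $X_d$; these are routine and you gesture at them). Your argument for $(1)\Rightarrow(3)$ is genuinely different from the paper's: you split $k_{9-d}=v+K'$ with $(K')^2=d+1$ and identify $v^\perp\cap k_{9-d}^\perp\cong E_{8-d}$ using transitivity of $W(E_{9-d})$ on exceptional vectors, whereas the paper instead proves $(2)\Rightarrow(3)$ directly, observing that $\Lambda'\hookrightarrow\Pic(\widetilde X_d)$ factors through the pullback $\Pic(\widetilde X_{d+1})\hookrightarrow\Pic(\widetilde X_d)$, which carries $\langle K_{\widetilde X_{d+1}}\rangle^\perp\cong E_{8-d}$ into $\langle K_{\widetilde X_d}\rangle^\perp$. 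Both are valid; the paper's version is essentially free once $(2)$ is established, while yours is self-contained lattice theory.

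The genuine gap is in $(3)\Rightarrow(1)$, which is the direction the whole classification in Sections \ref{sec: char7}--\ref{sec: char3} rests on. Your plan is to "directly analyze" the rank-$2$ orthogonal complement $N$ of the image $M$ of $E_{8-d}$ in ${\rm I}^{1,9-d}$ and find an exceptional vector there. But the isomorphism class of the pair $(N,k_{9-d})$ is not determined by the abstract lattice $E_{8-d}$: it depends on the saturation $[\overline M:M]$ and the glue of the embedding, i.e., exactly on which embedding $E_{8-d}\hookrightarrow E_{9-d}$ occurs. Pinning this down for \emph{all} embeddings is essentially the classification of root sublattices that the paper imports from Dynkin \cite[Table 11]{DynkinSemisimple} and Martinet: the paper shows the embedding is unique up to ${\rm O}(E_{9-d})=\{\pm{\rm id}\}\cdot W(E_{9-d})$, notes that $W(E_{9-d})$ preserves ${\rm Exc}_{9-d}$, and then reduces to exhibiting a \emph{single} surface with an $E_{8-d}$-configuration and a disjoint $(-1)$-curve, quoted from \cite{MartinStadlmayr}. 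Without either that uniqueness statement or a case-by-case determination of the possible $(N,k_{9-d})$ (including non-primitive embeddings, e.g.\ $\det E_5=\det D_5=4$ admits index-$2$ saturations a priori), your "direct analysis" does not yet constitute a proof; you flag this as the main obstacle but do not resolve it. The same applies to the edge cases $d=6,7,8$, where $E_{8-d}$ is not a root lattice and the paper gives separate short arguments ($\Lambda'=0$ for $d\geq 7$; the two $W$-orbits of $A_1\hookrightarrow A_2\oplus A_1$ for $d=6$).
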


\begin{proof}
First, we show $(1) \Rightarrow (2)$. Let $\widetilde{C}$ be the $(-1)$-curve whose existence is asserted in $(1)$. Contracting $\widetilde{C}$, we obtain a weak del Pezzo surface $\widetilde{X}_{d+1}$ of degree $(d+1)$ such that $\widetilde{X}_d$ is the blow-up of $\widetilde{X}_{d+1}$ in a smooth point $\widetilde{P}$. Let $X_{d+1}$ be the anti-canonical model of $\widetilde{X}_{d+1}$ and let $P$ be the image of $\widetilde{P}$ in $X_{d+1}$. By our choice of $\widetilde{C}$, all components of the exceptional locus over $\Gamma'$ stay $(-2)$-curves in $\widetilde{X}_{d+1}$, hence $X_{d+1}$ contains $\Gamma'$.

Since $\widetilde{X}_{d}$ is a weak del Pezzo surface, $\widetilde{P}$ cannot lie on a $(-2)$-curve (otherwise the strict transform of such a curve would have negative intersection with $-K_{\widetilde{X}_d}$, which is impossible as $-K_{\widetilde{X}_d}$ is nef), hence $P$ is a smooth point on $X_{d+1}$.
Thus, blowing up $P \in X_{d+1}$, we obtain a surface $Y_d$ with the same singularities as $X_{d+1}$. In particular, $Y_d$ has only rational double points as singularities and its minimal resolution is $\widetilde{X}_{d}$. Therefore, pullback of sections induces isomorphisms $H^0(Y_d,-n K_{Y_d}) \cong H^0(\widetilde{X}_d, -n K_{\widetilde{X}_d})$
for all $n \geq 0$, where the surjectivity follows from the fact that $Y_d$ is normal. Thus, the anti-canonical model of $Y_d$ coincides with $X_d$. The situation is summarized in the following Figure \ref{figure: contracting-1}.
\begin{figure}[h!]
\vspace{-1mm}
$
  \xymatrix{
      \widetilde{C}  \subseteq  \widetilde{X}_d\text{\hspace{7mm}} \ar[rr]^{\text{contract } \widetilde{C}}_{\text{blow-up } \widetilde{P}} \ar[d] \ar@/_2.8pc/[dd]_{\text{anti-can.}}  & &   \text{\hspace{6mm}} \widetilde{X}_{d+1} \ar[d]^{\text{anti-can.}}  \ni  \widetilde{P}   \\
      C \subseteq  Y_d \text{\hspace{7mm}} \ar[rr]^{\text{contract } {C}}_{\text{blow-up } {P}} \ar[d]^{\text{anti-can.}}       &    &  \text{\hspace{6mm}} X_{d+1} \ni  P \\
       X_d & 
  }
$
\vspace{-2mm}
\caption{Contracting a $(-1)$-curve disjoint from the singular locus}
\label{figure: contracting-1}
\end{figure}
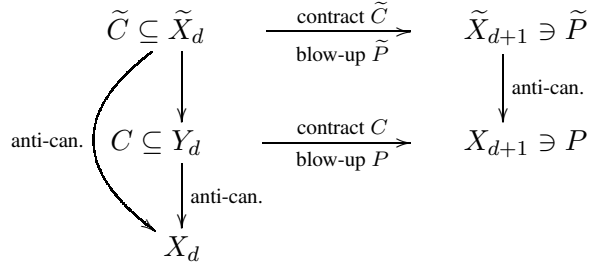

 Note that $X_d \dashrightarrow Y_d \rightarrow X_{d+1}$ is an isomorphism in a neighborhood of $\Gamma'$, since $\widetilde{C}$ is disjoint from the exceptional locus over $\Gamma'$.

Next, we show $(2) \Rightarrow (3)$. We have $\langle K_{\widetilde{X}_{d}} \rangle^\perp  \cong E_{9-d}$ and $\langle K_{\widetilde{X}_{d + 1}} \rangle^\perp \cong E_{8-d}$. Since $X_{d+1}$ contains $\Gamma'$, the embedding $\Lambda' \hookrightarrow \Pic(\widetilde{X}_{d})$ factors through the pullback map $\Pic(\widetilde{X}_{d+1}) \hookrightarrow \Pic(\widetilde{X}_{d})$, which maps $\langle K_{\widetilde{X}_{d+1}} \rangle^\perp$ to $\langle K_{\widetilde{X}_{d}} \rangle^\perp$. Hence $(3)$ follows. 

Finally, to show that $(3) \Rightarrow (1)$, we identify $\Pic(\widetilde{X}_d)$ and ${\rm I}^{1,9-d}$ via a geometric marking. We have to show that there is a $(-1)$-curve $\widetilde{C}$ on $\widetilde{X}_d$ that does not meet the set $\mathcal{R}'$ of exceptional curves over $\Gamma'$. Let $\Lambda$ be the sublattice of $\Pic(\widetilde{X}_d)$ spanned by the classes of all $(-2)$-curves. 
Since $\Lambda'$ is a sum of connected components of $\Lambda$, Lemma \ref{lemma: (-1)-curves lattice theoretic} shows that it suffices to prove
$$
({\rm Exc}_{9-d}^{W(\Lambda')})/W(\Lambda) \neq \emptyset.
$$
Clearly, this is the case if and only if ${\rm Exc}_{9-d}^{W(\Lambda')} \neq \emptyset$.
Since $\Lambda' \hookrightarrow E_{9-d}$ factors through an embedding $E_{8-d} \hookrightarrow E_{9-d}$, we have
$$
{\rm Exc}_{9-d}^{W(E_{8-d})} \subseteq {\rm Exc}_{9-d}^{W(\Lambda')},
$$
so it suffices to show that ${\rm Exc}_{9-d}^{W(E_{8-d})} \neq \emptyset$. Since the action of $W(E_{9-d})$ on $\Pic(\widetilde{X}_d)$ preserves ${\rm Exc}_{9-d}$, the condition ${\rm Exc}_{9-d}^{W(E_{8-d})} \neq \emptyset$ depends on the embedding $E_{8-d} \hookrightarrow E_{9-d}$ only up to conjugation by elements of $W(E_{9-d})$ and up to automorphisms of $E_{8-d}$.

If $d \leq 5$, then $E_{8-d}$ is a root lattice. By \cite[Table 11]{DynkinSemisimple} and \cite[Exercise 4.2.1, 4.6.2]{Martinet}, the embedding $\iota: E_{8-d} \hookrightarrow E_{9-d}$ is unique up to the action of ${\rm O}(E_{9-d})$. Since ${\rm O}(E_{9-d})$ is generated by $\{ \pm {\rm id} \}$ and $W(E_{9-d})$ in every case (see e.g. \cite[Proposition 4.2.2, Theorem 4.3.3, 4.5.2, 4.5.3]{Martinet}), $\iota$ is unique up to the action of $W(E_{9-d})$ and up to automorphisms of $E_{8-d}$. Therefore, in order to show that ${ \rm Exc}_{9-d}^{W(E_{8-d})} \neq \emptyset$, it suffices to show that there exists \emph{some} $\widetilde{X}_d$ containing a configuration of $(-2)$-curves of type $E_{8-d}$ and such that a $(-1)$-curve disjoint from this configuration exists. This is known and can be seen for example in \cite[Figures 23, 48, 57, 63, and 61]{MartinStadlmayr}. 

If $d \geq 7$, then $E_{8-d}$ does not contain any $(-2)$-vectors, hence $\Lambda' = 0$ and the implication $(3) \Rightarrow (1)$ holds, since $\widetilde{X}_d$ is a blow-up of $\bbP^2$ by assumption. 

Finally, if $d = 6$, then the maximal root lattice contained in $E_{8-d}=E_{2}$ is $A_1$. Thus, we may assume $\Lambda' =A_1$, for otherwise we can argue as in the previous Case $d \geq 7$.
Up to the action of ${\rm O}(E_3) = \{ \pm {\rm id} \} \times W(E_3)$, there are two embeddings of $A_1$ into $E_3 = A_2 \oplus A_1$. It is easy to check that $\iota: A_1 \hookrightarrow E_3$ factors through $E_2$ if and only if $\iota$ factors through the $A_2$-summand of $E_3$ and then $\iota$ is unique up to the action of $W(E_{3})$ and up to automorphisms of $A_1$. Hence, similar to what we did in the case $d \leq 5$, it suffices to find some $\widetilde{X}_6$ containing a $(-2)$-curve and a disjoint $(-1)$-curve. Again, this is known, see \cite[Figure 24]{MartinStadlmayr}.
\end{proof}

\begin{Corollary} \label{cor: lattice embeddings Dynkin Martinet - strategy proof}
Let $X_d$ be an RDP del Pezzo surface of degree $d \leq 8$, let $\Gamma'$ be a configuration of rational double points on $X_d$, and let $\Lambda'$ be the root lattice associated to $\Gamma'$. If $\Gamma'$ occurs on an RDP del Pezzo surface of degree $(d + 1)$ and satisfies one of the following conditions:
\begin{enumerate}
    \item $d \neq 4,2,1$,
    \item $d = 4$ and $\Lambda' \neq A_3$,
    \item $d = 2$ and $\Lambda' \not\in \{A_5 + A_1, A_5, A_3+2A_1, A_3+A_1, 4A_1, 3A_1\}$,
    \item $d = 1$ and $\Lambda' \not\in \{A_7, 2A_3, A_5+A_1, A_3+2A_1, 4A_1\}$.
\end{enumerate}
Then $X_d$ is the anti-canonical model of a blow-up in a smooth point $P$ of an RDP del Pezzo surface $X_{d+1}$ of degree $(d + 1)$ containing $\Gamma'$ such that $X_d \dashrightarrow X_{d+1}$ is an isomorphism around $\Gamma'$.
\end{Corollary}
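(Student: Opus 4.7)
The plan is to verify condition~(3) of Theorem~\ref{thm: lattice theoretic reduction criterion}: we must show that the embedding $\iota\colon \Lambda' \hookrightarrow E_{9-d}$ determined by the exceptional divisor of the minimal resolution of $X_d$ over $\Gamma'$ factors through some embedding $E_{8-d} \hookrightarrow E_{9-d}$. By hypothesis, $\Gamma'$ also occurs on an RDP del Pezzo surface of degree $d+1$, so there exists an embedding $\iota'\colon \Lambda' \hookrightarrow E_{8-d}$. Composing $\iota'$ with any fixed embedding $\varepsilon\colon E_{8-d} \hookrightarrow E_{9-d}$ produces an embedding $\iota'' := \varepsilon \circ \iota' \colon \Lambda' \hookrightarrow E_{9-d}$ which, by construction, factors through $E_{8-d}$.

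The task therefore reduces to showing that $\iota$ and $\iota''$ lie in the same orbit under $W(E_{9-d}) \times \Aut(\Lambda')$: the property of factoring through $E_{8-d}$ is preserved under such equivalences, since by the proof of Theorem~\ref{thm: lattice theoretic reduction criterion} all embeddings $E_{8-d} \hookrightarrow E_{9-d}$ lie in a single $W(E_{9-d})$-orbit, and an automorphism of $\Lambda'$ only reorders the connected components of $\Gamma'$.

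To carry out the uniqueness step, I would invoke the classification of root sublattices of $E_n$ due to Dynkin \cite{DynkinSemisimple} as compiled in \cite{Martinet}. For $d = 7,8$, the conclusion is immediate since $E_{8-d}$ contains no $(-2)$-vectors and $\widetilde{X}_d$ is by assumption a blow-up of $\bbP^2$; for $d = 6$ the analogous easy argument used at the end of the proof of Theorem~\ref{thm: lattice theoretic reduction criterion} applies to the only possible $\Lambda' = A_1$. For $d = 5,3$, one reads off from the tables that every admissible root lattice $\Lambda'$ possesses a single $W(E_{9-d}) \times \Aut(\Lambda')$-orbit of embeddings into $E_{9-d}$, so $\iota$ and $\iota''$ are automatically equivalent and no exclusions are needed. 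For the remaining degrees $d = 4, 2, 1$, the tables show that uniqueness of the embedding fails precisely for the configurations enumerated in (2), (3), and (4) of the statement; for every other admissible $\Lambda'$, the unique orbit contains $\iota''$, hence also $\iota$, and the corollary follows from Theorem~\ref{thm: lattice theoretic reduction criterion}.

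The main obstacle is the finite but tedious case-by-case bookkeeping in the low-degree range $d \in \{1,2,4\}$: one must match each root lattice $\Lambda'$ realizable on a degree-$d$ RDP del Pezzo surface against the list of $W(E_{9-d})$-orbits of root sublattices of $E_{9-d}$ and verify by direct inspection that exactly the configurations listed in (2)–(4) admit two inequivalent embeddings, only one of which factors through $E_{8-d}$. Once this inspection is complete, the remaining content of the proof is entirely formal from Theorem~\ref{thm: lattice theoretic reduction criterion}.
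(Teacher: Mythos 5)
Your proof is correct and follows essentially the same route as the paper's: reduce to condition (3) of Theorem \ref{thm: lattice theoretic reduction criterion}, obtain one factoring embedding of $\Lambda'$ into $E_{9-d}$ from the hypothesis that $\Gamma'$ occurs in degree $d+1$, and invoke the Dynkin--Martinet classification to conclude that the embedding is unique up to equivalence except precisely in the excluded cases, so the geometrically given embedding factors as well. The only cosmetic differences are that you phrase uniqueness in terms of $W(E_{9-d})\times\Aut(\Lambda')$-orbits where the paper uses ${\rm O}(E_{9-d})$, and that you treat the trivial degrees $d\geq 6$ separately.
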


\begin{proof}
The embeddings of root lattices into $E_6,E_7,$ and $E_8$ have been classified by Dynkin \cite[Table 11]{DynkinSemisimple} and for the embeddings of root lattices into $E_3 = A_2 \oplus A_1, E_4 = A_4$, and $E_5 = D_5$, we refer the reader to \cite[Exercise 4.2.1, 4.6.2]{Martinet}. It follows from these classifications that if an embedding of $\Lambda'$ into $E_{9-d}$ exists, then this embedding is unique (up to the action of ${\rm O}(E_{9-d})$), except precisely in the cases excluded in $(2), (3)$ and $(4)$. Hence, if one embedding $\Lambda' \hookrightarrow E_{9-d}$ factors through an embedding $E_{8-d} \hookrightarrow E_{9-d}$, then \emph{every} embedding factors through an embedding $E_{8-d} \hookrightarrow E_{9-d}$. 
If $\Gamma'$ occurs on some RDP del Pezzo surface of degree $(d+1)$, then an embedding of $\Lambda'$ with such a factorization exists and the claim follows from Theorem \ref{thm: lattice theoretic reduction criterion}.
\end{proof}

\section{Automorphism schemes of non-equivariant RDP del Pezzo surfaces} \label{sec: nonequivariant}
Throughout this section, $X$ denotes an RDP del Pezzo surface and $\pi: \widetilde{X} \to X$ is its minimal resolution. In this section, we will prove Theorem \ref{thm: nonequivariantmain}, that is, we will classify all $X$ with $H^0(X,T_X) \neq 0$ over a field of characteristic $p \in \{3,5,7\}$ and such that $X$ contains one of the RDPs excluded in Theorem \ref{thm: equivariantmain}. We will treat the cases $p = 7$, $p = 5$, and $p = 3$, in Sections \ref{sec: char7}, \ref{sec: char5}, and \ref{sec: char3}, respectively. This will complete the classification of all RDP del Pezzo surfaces with global vector fields in odd characteristic.
 
The strategy of proof is as follows: First, for each degree $1 \leq d \leq 9$, we give the list of RDP configurations $\Gamma$ that can occur on an RDP del Pezzo surface $X$ of degree $d$ and that contain at least one RDP whose minimal resolution is not $T_X$-equivariant. Then, starting with the highest possible degree and working our way down with Theorem \ref{thm: lattice theoretic reduction criterion}, we classify those $X$ containing $\Gamma$ and satisfying $H^0(X,T_X) \neq 0$. In each step, we give explicit equations and calculate $\Aut_X^0$ using Proposition \ref{prop: StabiLemma}, Proposition \ref{prop: ShortExactAutSequences}, and Proposition \ref{prop: fitting argument A_n}.

\begin{Notation}
If $d \geq 3$, we use the notation $x_0,\hdots,x_d$ for the coordinates of $\mathbb{P}^d$. If $d = 2$, we use the notation $x,y,z$ and $w$ for the coordinates of $\mathbb{P}(1,1,1,2)$, where $w$ has weight $2$. Finally, if $d = 1$, we use the notation $s,t,x$ and $y$ for the coordinates of $\mathbb{P}(1,1,2,3)$, where $x$ has weight $2$ and $y$ has weight $3$. 
In Tables \ref{Table Eqn and Aut - char 7}, \ref{Table Eqn and Aut - char 5}, \ref{Table Eqn and Aut - char 3, deg at least 4}, \ref{Table Eqn and Aut - char 3, deg 3}, \ref{Table Eqn and Aut - char 3, deg 2}, and \ref{Table Eqn and Aut - char 3, deg 1}, we describe $\Aut_X^0$ as follows:
\begin{itemize}
    \item We only describe the $R$-valued points of $\Aut_X^0$, where $R$ is an arbitrary local $k$-algebra. By \cite[Lemma 3.5]{MartinStadlmayr}, this suffices to describe the scheme structure of $\Aut_X^0$ completely. We do this by either describing a general $R$-valued point as a matrix or by describing the image of $[x_0:\hdots:x_n]$ (resp. $[x:y:z:w]$, resp. $[s:t:x:y]$) under a general $R$-valued automorphism of $X$.
    \item We often describe $\Aut_X^0$ as the group scheme $\langle G_1,G_2 \rangle$ generated by subgroup schemes $G_1$ and $G_2$ of $\Aut_X^0$.
    By this we mean that we describe $\Aut_X^0$, using Proposition \ref{prop: ShortExactAutSequences}, as the smallest subgroup scheme of $\PGL_{d+1}$ (resp. $\Aut_{\mathbb{P}(1,1,1,2)}$ if $d = 2$, resp. $\Aut_{\mathbb{P}(1,1,2,3)}$ if $d = 1$) containing both $G_1$ and $G_2$. 
    \item We use the variables $\lambda$ or $\lambda_i$ for $R$-valued points of $\mathbb{G}_m$ and $\mu_{p^n}$ (where $\lambda^{p^n} = 1$), and the variables $\varepsilon$ or $\varepsilon_i$ for $R$-valued points of $\mathbb{G}_a$ and $\alpha_{p^n}$ (where $\varepsilon^{p^n} = 0$).
\end{itemize}
\end{Notation}

\subsection{In characteristic $7$} \label{sec: char7}
By Theorem \ref{thm: equivariantmain}, we have to list all RDP configurations containing $A_6$ that can occur on an RDP del Pezzo surface in characteristic $7$.

\begin{Lemma}
If $p = 7$, $\deg(X) = d$, and $X$ contains an $A_6$-singularity, then $d$ and the configuration $\Gamma$ of RDPs on $X$ is one of the cases in Table \ref{Table critchar7}.
\begin{table}[h!]
$$
\begin{array}{|c||c |c|}
 \hline
    d & \Gamma & \subseteq \langle k_{9-d} \rangle^{\perp} \\ \hline \hline
   2 & A_6  & \subseteq E_7 \\ \hline
   1 & A_6, \hspace{3mm} A_6+A_1 & \subseteq E_8 \\ \hline
\end{array}
$$
\vspace{-2mm}
\caption{Non-equivariant RDP configurations in characteristic $7$}
\label{Table critchar7}
\end{table}
 \end{Lemma}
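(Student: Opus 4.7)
The plan is to reduce the classification to a purely lattice-theoretic enumeration and then invoke Dynkin's classification of root subsystems of the exceptional lattices $E_{9-d}$.

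First I would recall, from the setup of Section 3 and the conventions introduced before Lemma \ref{lemma: (-1)-curves lattice theoretic}, that if $X$ is an RDP del Pezzo surface of degree $d \leq 8$ with minimal resolution $\pi \colon \widetilde{X} \to X$, then the exceptional $(-2)$-curves over a configuration $\Gamma$ of RDPs span a root sublattice $\Lambda_{\Gamma}$ of $\langle K_{\widetilde{X}} \rangle^{\perp} \cong E_{9-d}$ (after choosing a geometric marking). In particular, if $\Gamma$ contains an $A_6$-singularity, then the root system $A_6$ must embed into $E_{9-d}$ as a root subsystem.

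Next I would rule out the high-degree cases. For $d \geq 4$, the rank of $E_{9-d}$ is at most $5 < 6 = \operatorname{rank}(A_6)$, so no embedding exists. For $d = 3$, the lattice $E_6$ has rank $6$, but by Dynkin's classification (already cited in the proof of Corollary \ref{cor: lattice embeddings Dynkin Martinet - strategy proof}) its rank-$6$ root subsystems are $3A_2$ and $A_5 + A_1$; in particular, $A_6$ is not among them. This leaves only $d \in \{1,2\}$.

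For the two remaining cases I would appeal to the fact that the embedding of $A_6$ into $E_7$ (respectively $E_8$) is unique up to $W(E_{9-d})$-conjugacy and, by the argument in Corollary \ref{cor: lattice embeddings Dynkin Martinet - strategy proof}, it suffices to determine the root system of the orthogonal complement. For $d = 2$, the complement $A_6^{\perp}$ in $E_7$ has rank $1$; a discriminant count ($\det(A_6) = 7$ and $\det(E_7) = 2$ force $\det(A_6^{\perp}) = 14$) shows this lattice contains no $(-2)$-vector, so the only root sublattice of $E_7$ containing $A_6$ is $A_6$ itself. For $d = 1$, the complement $A_6^{\perp}$ in $E_8$ has rank $2$; the concrete chain $A_6 \subset A_8 \subset E_8$ exhibits a $(-2)$-vector in $A_6^{\perp}$ (namely $\varepsilon_8 - \varepsilon_9$ inside $A_8$), and a further discriminant/Gram-matrix check rules out additional $(-2)$-vectors, so the root system of $A_6^{\perp}$ is $A_1$. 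Hence the root sublattices of $E_8$ containing $A_6$ are exactly $A_6$ and $A_6 + A_1$. Collecting these three possibilities yields exactly the entries of Table \ref{Table critchar7}.

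The only step that requires some genuine bookkeeping is the determination of the root systems of $A_6^{\perp}$ inside $E_7$ and $E_8$; this can either be read off directly from Dynkin's tables or, as sketched above, obtained by a short calculation starting from the concrete embeddings $A_6 \subset A_7 \subset E_7$ and $A_6 \subset A_8 \subset E_8$ and comparing lattice discriminants.
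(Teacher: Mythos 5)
Your proof is correct and takes essentially the same route as the paper's: reduce to root sublattices of $\langle K_{\widetilde{X}}\rangle^{\perp}\cong E_{9-d}$, eliminate $d\geq 4$ by rank and $d=3$ by a rank/discriminant (or Dynkin table) argument, and settle $d=1,2$ via Dynkin's classification. The paper disposes of all $d\geq 3$ in one line (rank $6$ and discriminant $7$ obstruct any embedding of $A_6$ into $E_{9-d}$) and then simply cites Dynkin's Table 11 for $E_7$ and $E_8$; your explicit computation of the root system of $A_6^{\perp}$ is just a self-contained substitute for that citation.
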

 
 \begin{proof}
 Since $A_6$ has rank $6$ and discriminant $7$, it does not embed into $E_{9-d}$ with $d \geq 3$. By \cite[Table 11]{DynkinSemisimple}, the only root lattice containing $A_6$ and embedding into $E_{7}$ is $A_6$ itself and there are precisely two root lattices containing $A_6$ and embedding into $E_8$, namely $A_6$ and $A_6+A_1$.
 \end{proof}

\begin{Theorem} \label{thm: mainchar7}
Assume that $p=7$ and $X$ contains an RDP of type $A_6$. Then, $H^0(X,T_X) \neq 0$ if and only if $X$ is given by an equation as in Table  \ref{Table Eqn and Aut - char 7}. Moreover, $\Aut_X^0$ is as in Table \ref{Table Eqn and Aut - char 7}, so that $\Aut_{\widetilde{X}}^0 \subsetneq \Aut_X^0$ and even $h^0(X,T_X) > h^0(\widetilde{X},T_{\widetilde{X}})$.
\end{Theorem}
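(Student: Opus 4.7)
The plan is to run an inductive argument on the degree $d$, handling the cases from the preceding lemma, $(d,\Gamma)\in\{(2,A_6),(1,A_6),(1,A_6{+}A_1)\}$, in decreasing order of $d$. The main inductive tools will be Theorem \ref{thm: lattice theoretic reduction criterion} (through Corollary \ref{cor: lattice embeddings Dynkin Martinet - strategy proof}), which recognizes when $X_d$ is the anti-canonical model of a blow-up of an RDP del Pezzo $X_{d+1}$ containing a specified configuration $\Gamma'$ of RDPs, together with Proposition \ref{prop: StabiLemma}, which then identifies $\Aut_{X_d}^0$ with the identity component of the stabilizer in $\Aut_{X_{d+1}}^0$ of the blown-up point.

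\emph{Degree $2$, $\Gamma=A_6$.} The root lattice $A_6$ has rank $6$ and discriminant $7$, whereas $E_6$ has discriminant $3$, so $A_6$ does not embed into $E_6$. By Theorem \ref{thm: lattice theoretic reduction criterion}, there is no $(-1)$-curve on $\widetilde X$ disjoint from the $A_6$-configuration, and $X$ is not a blow-up of a higher-degree RDP del Pezzo. Hence I analyze $X$ directly: writing $X=\{w^2=Q(x,y,z)\}\subseteq\mathbb{P}(1,1,1,2)$ with $Q$ a quartic, I place the $A_6$-singularity at $[0{:}0{:}1]$ and use the local analytic normal form $y^2=x^7$ of an $A_6$ curve singularity to pin down the low-order Taylor coefficients of $Q$ at $[0{:}0{:}1]$, modulo the residual action of the stabilizer of $[0{:}0{:}1]$ in $\PGL_3$. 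By Proposition \ref{prop: ShortExactAutSequences}, $H^0(X,T_X)\neq 0$ is equivalent to $\Stab_{\PGL_3}(Q)^0$ being positive-dimensional, and a finite enumeration over the normalized Taylor coefficients of $Q$ produces the candidate equations. For each such $Q$, Proposition \ref{prop: fitting argument A_n} applied with $n=6=p-1$ bounds $\mathrm{length}(\Aut_X^0/\Aut_{\widetilde X}^0)\leq 7$, and an elementary calculation in $\PGL_3$ identifies $\Aut_X^0$ explicitly (realized either by an $\alpha_7$ or a $\mu_7$ stabilizing the $A_6$-point).

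\emph{Degree $1$.} For either $\Gamma=A_6$ or $\Gamma=A_6{+}A_1$, I take $\Gamma'=A_6$ (the $A_6$ summand) and apply Corollary \ref{cor: lattice embeddings Dynkin Martinet - strategy proof}(4): since $A_6\notin\{A_7,2A_3,A_5{+}A_1,A_3{+}2A_1,4A_1\}$ and $A_6$ occurs on a degree-$2$ RDP del Pezzo by the previous step, $X$ is the anti-canonical model of the blow-up of one of the surfaces $X_2$ from Step 1 at a smooth point $P\in X_2$. The additional $A_1$ that appears in the $\Gamma=A_6{+}A_1$ case arises exactly when $P$ lies below a $(-1)$-curve on $\widetilde X_2$ disjoint from the $A_6$-chain, as is visible in the proof of Theorem \ref{thm: lattice theoretic reduction criterion}. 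By Proposition \ref{prop: StabiLemma}, $\Aut_X^0=(\Stab_{\Aut_{X_2}^0}(P))^0$, so the classification is completed by enumerating the orbits of $\Aut_{X_2}^0$ on the smooth locus of each $X_2$ from Step 1 and singling out the points with positive-dimensional stabilizer. The strict inclusion $\Aut_{\widetilde X}^0\subsetneq\Aut_X^0$, and the corresponding strict inequality of Lie algebra dimensions, then follows from Proposition \ref{prop: Hirokados RDPs with non lift vector fields} (which, since $p=7$ divides $n+1=7$, ensures that $\pi_*T_{\widetilde X}\to T_X$ fails to be an isomorphism) combined with the fact that $\pi_*\colon \Aut_{\widetilde X}^0\hookrightarrow \Aut_X^0$ induces an injection on Lie algebras by Blanchard's Lemma.

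The main obstacle will be the normal-form analysis in degree $2$: constraining quartics $Q$ with an $A_6$-singularity in characteristic $7$ tightly enough that the enumeration of those admitting a positive-dimensional $\PGL_3$-stabilizer has only finitely many classes, and then, for each such $Q$, locating explicitly the extra non-reduced factor (an $\alpha_7$ or $\mu_7$) which is permitted but not forced by Proposition \ref{prop: fitting argument A_n}, and which accounts for the difference between $\Aut_X^0$ and $\Aut_{\widetilde X}^0$.
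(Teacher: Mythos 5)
Your overall strategy --- settle degree $2$ by a normal-form analysis of the branch quartic, then descend to degree $1$ via Corollary \ref{cor: lattice embeddings Dynkin Martinet - strategy proof} and Proposition \ref{prop: StabiLemma} --- is exactly the paper's, and the lattice-theoretic observations (no $(-1)$-curve avoids the $A_6$-configuration in degree $2$, since $A_6$ does not embed into $E_6$) are correct. However, one step would, taken literally, make the whole classification come out empty: you assert that $H^0(X,T_X)\neq 0$ is equivalent to $\Stab_{\PGL_3}(Q)^0$ being \emph{positive-dimensional}, and correspondingly in degree $1$ you propose to single out the points of $X_2$ with positive-dimensional stabilizer. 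In fact $H^0(X,T_X)=\operatorname{Lie}(\Aut_X)$, so the correct criterion is that the stabilizer has non-trivial Lie algebra, i.e.\ that its identity component is not \'etale; positive-dimensionality is strictly stronger in characteristic $p$. In the case at hand the answer is the \emph{finite} infinitesimal group scheme $\mu_7$ (for the Klein quartic $x^3y+y^3z+z^3x$), and in degree $1$ one is looking for fixed points of a finite $\mu_7$-action, whose stabilizers are again zero-dimensional; an enumeration keyed to positive dimension would therefore return no surfaces at all. Your closing paragraph shows you expect a non-reduced $\alpha_7$ or $\mu_7$, so this is presumably a slip, but it sits at the pivot of the enumeration and must be repaired before the argument runs.

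Two smaller points. First, in degree $1$, Proposition \ref{prop: StabiLemma} identifies $\Stab_{\Aut_{X_2}}(P)^0$ with $\Aut_{Y_1}^0$ for the blow-up $Y_1\to X_2$, not with $\Aut_X^0$; you still need $\Aut_X^0=\Aut_{Y_1}^0$, which the paper obtains from Proposition \ref{prop: fitting argument A_n} because $Y_1\to X$ only involves the residual $A_1$. Second, deducing $h^0(X,T_X)>h^0(\widetilde X,T_{\widetilde X})$ from the failure of $\pi_*T_{\widetilde X}\to T_X$ to be an isomorphism is a non sequitur: that failure concerns a skyscraper cokernel and does not by itself force a strict inequality of global sections. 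The inequality instead follows from the explicit outputs, namely $\Aut_{\widetilde X}^0=\{1\}$ by \cite{MartinStadlmayr} while $\Aut_X^0=\mu_7$. (For the degree-$2$ base case itself, the paper bypasses your Taylor-coefficient enumeration by quoting the classical uniqueness of the quartic with an $A_6$-singularity and exhibiting the Klein quartic; your more hands-on route is viable and does the same job.)
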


\begin{proof}
By Table \ref{Table critchar7}, we have $\deg(X) = d \leq 2$. Assume $H^0(X,T_X) \neq 0$.

\underline{If $d = 2$}, then, by Theorem \ref{thm: anti-cano embeddings/coverings}, the anti-canonical system of $X$ realizes $X$ as a double cover of $\bbP^2$ branched over a quartic curve $Q$ with a simple singularity of type $A_6$. Over the complex numbers, there is a unique such $Q$ (see \cite[Proposition 1.3.II]{BruceGiblin}) and the argument carries over without change to characteristic $7$. Now, an elementary calculation shows that the Klein quartic equation
$$
x^3y + y^3z + z^3 x = 0
$$
defines such a $Q$ with an $A_6$-singularity at $[1:2:-3]$. Thus, $X$ is given by the equation in Table \ref{Table Eqn and Aut - char 7}. 
Clearly, the $\mu_7$-action described in Table \ref{Table Eqn and Aut - char 7} preserves $X$.
Since $\Aut_{\widetilde{X}}^0$ is trivial by \cite{MartinStadlmayr}, Proposition \ref{prop: fitting argument A_n} implies that $\Aut_X^0 = \mu_7$.

\underline{If $d = 1$}, then, by Theorem \ref{thm: anti-cano embeddings/coverings}, the anti-bi-canonical system of $X$ realizes $X$ as a double cover of the quadratic cone in $\mathbb{P}^3$ branched over a sextic curve $S$. By Table \ref{Table critchar7}, the RDP configuration on $X$ is either $A_6$ or $A_6 + A_1$. By Corollary \ref{cor: lattice embeddings Dynkin Martinet - strategy proof}, $X$ is the anti-canonical model of a blow-up $Y_1$ in a smooth point $P$ of the surface $X_2$ of Case $d = 2$. By Proposition \ref{prop: fitting argument A_n}, the morphism $Y_1 \to X$ is $\Aut_X^0$-equivariant, since it is an isomorphism around the $A_6$-singularity. Hence, by Proposition \ref{prop: StabiLemma}, we have $\Aut_X^0 = \Aut_{Y_1}^0 = \Stab_{\Aut_{X_2}^0}(P)^0 = \Stab_{\mu_7}(P)^0$.
So, since $\mu_7$ is simple, $Y_1$ is the blow-up of $X_2$ in a smooth fixed point $P$ of the $\mu_7$-action on $X_2$, and $\Aut_X^0 = \mu_7$.

Next, we prove the uniqueness of $X$. We may assume that $X_2$ is given by the equation in Table \ref{Table Eqn and Aut - char 7}. The fixed points of the $\mu_7$-action are the three points $[1:0:0:0], [0:1:0:0]$ and $[0:0:1:0]$. The automorphism $x \mapsto y \mapsto z \mapsto x$ of $X_2$ permutes these fixed points. So, $Y_1$ is unique and hence so is $X$.

Therefore, $X$ is the unique RDP del Pezzo surface of degree $1$ with an $A_6$-singularity and non-zero global vector fields. Now, the equation
$$y^2 = x^3 + ts^3x +t^5s$$
defines such an RDP del Pezzo surface of degree $1$ in $\bbP(1,1,2,3)$ with an $A_6$-singularity at $[1:-3:1:0]$ and, additionally, an $A_1$-singularity at $[1:0:0:0]$. Clearly, the $\mu_7$-action described in Table \ref{Table Eqn and Aut - char 7} preserves $X$. Hence, this is the surface we were looking for.
\end{proof}

\begin{Remark}
We remark that by \cite{MartinStadlmayr}, in both cases of Theorem \ref{thm: mainchar7} the minimal resolution $\widetilde{X}$ of $X$ does not admit any non-trivial global vector fields. In particular, $\pi: \widetilde{X} \to X$ is not $T_X$-equivariant.
\end{Remark}

\subsection{In characteristic $5$} \label{sec: char5}
By Theorem \ref{thm: equivariantmain}, we have to list all RDP configurations containing $A_4$ or $E_8^0$ that can occur on an RDP del Pezzo surface in characteristic $5$.

\begin{Lemma}
If $p = 5$, $\deg(X) = d$, and $X$ contains a singularity of type $A_4$ or $E_8^0$, then $d$ and the configuration $\Gamma$ of RDPs on $X$ is one of the cases in Table \ref{Table critchar5}.
\begin{table}[h!]
$$
\begin{array}{|c||c|c|}
 \hline
    \text{d} & \Gamma & \subseteq \langle k_{9-d} \rangle^\perp\\ \hline \hline
   5 & A_4 & \subseteq A_4 \\ \hline
   4 & A_4 & \subseteq D_5 \\ \hline
   3 & A_4, \hspace{3mm} A_4+A_1 & \subseteq E_6 \\ \hline
   2 & A_4, \hspace{3mm} A_4+A_1, \hspace{3mm} A_4+A_2 & \subseteq E_7 \\ \hline
   1 & A_4, \hspace{3mm} A_4+A_1, \hspace{3mm} A_4+2A_1, \hspace{3mm} A_4+A_2, \hspace{3mm} A_4+A_2+A_1, \hspace{3mm} A_4+A_3, \hspace{3mm} 2A_4, \hspace{3mm} E_8^0 & \subseteq E_8 \\ \hline
\end{array}
$$
\vspace{-2mm}
\caption{Non-equivariant RDP configurations in characteristic $5$}
\label{Table critchar5}
\end{table}
\end{Lemma}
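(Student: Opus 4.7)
The plan is to proceed exactly as in the preceding characteristic-$7$ lemma, translating the question into a purely lattice-theoretic enumeration. Under a geometric marking of $\widetilde{X}$, the configuration $\Gamma$ of RDPs on $X$ corresponds to a root sublattice $\Lambda \subseteq \langle k_{9-d}\rangle^\perp \cong E_{9-d}$ whose irreducible components are the Dynkin diagrams of the individual singularities. In particular, an $A_4$-singularity (respectively an $E_8^0$-singularity) corresponds to an $A_4$-summand (respectively an $E_8$-summand) of $\Lambda$. Hence it suffices to enumerate all root sublattices of $E_{9-d}$ that contain such a summand.

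First I would dispose of the $E_8^0$-case: since $E_8$ has rank $8$, an embedding $E_8 \hookrightarrow E_{9-d}$ forces $9-d = 8$, so $d = 1$, and the embedding fills the entire lattice. This yields the single configuration $\Gamma = E_8^0$ in degree $1$.

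For the $A_4$-case I would proceed degree by degree. For $d = 5$ one has $E_4 = A_4$, which is already filled, giving only $\Gamma = A_4$. For $d = 4$, with $E_5 = D_5$, a direct check using the standard realization $D_5 = \{\pm e_i \pm e_j : 1 \le i < j \le 5\}$ shows that the orthogonal complement of any $A_4 \subset D_5$ contains no roots, so again only $\Gamma = A_4$ occurs. For $d \in \{3,2,1\}$ I would invoke Dynkin's classification of root subsystems of $E_6, E_7, E_8$ recorded in \cite[Table 11]{DynkinSemisimple} and simply read off those sub-root systems which decompose as $A_4 \oplus \Lambda'$ for some (possibly empty) root system $\Lambda'$.

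The only real issue, and the "main obstacle" if one can call it that, is bookkeeping: one must take care to discard root subsystems in which $A_4$ appears only as a proper sub-diagram of an indecomposable component (for instance, $A_5, A_6, D_5, D_6, E_6, E_7$ all contain $A_4$ as a sub-root system but not as a summand) and retain only those in which $A_4$ is its own connected component. Collecting the surviving cases yields precisely the configurations listed in Table \ref{Table critchar5}.
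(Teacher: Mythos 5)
Your proposal is correct and takes essentially the same route as the paper: the paper likewise reduces the lemma to enumerating root sublattices of $E_{9-d}$ having an $A_4$- (respectively $E_8$-) summand, settles $d\in\{4,5\}$ by hand (via a rank/discriminant comparison where you instead compute directly in the standard realization of $D_5$ -- both work), and reads off the cases $d\le 3$ from Dynkin's Table 11. The only point you leave implicit is the exclusion of $d\ge 6$, which is immediate since $E_{9-d}$ then has rank at most $3<4=\mathrm{rank}(A_4)$.
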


 \begin{proof}
 Since $A_4$ has rank $4$ and discriminant $5$, it does not embed into $E_{9-d}$ with $d \geq 6$, and the only root lattice containing $A_4$ and embedding into $E_{9-d}$ with $d \in \{4,5\}$ is $A_4$ itself. The other cases can be found in \cite[Table 11]{DynkinSemisimple}.
 \end{proof}

\begin{Theorem} \label{thm: mainchar5}
Assume that $p=5$ and $X$ contains an RDP of type $A_4$ or $E_8^0$. Then, $H^0(X,T_X) \neq 0$ if and only if $X$ is given by an equation as in Table  \ref{Table Eqn and Aut - char 5}. Moreover, $\Aut_X^0$ is as in Table \ref{Table Eqn and Aut - char 5}, so that $\Aut_{\widetilde{X}}^0 \subsetneq \Aut_X^0$ and even $h^0(X,T_X) > h^0(\widetilde{X},T_{\widetilde{X}})$.
\end{Theorem}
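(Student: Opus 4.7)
The plan is to mirror the inductive strategy used in the proof of Theorem \ref{thm: mainchar7}, running it over the richer list of configurations in Table \ref{Table critchar5}. Starting from the highest degree where each RDP configuration first appears (so $d=5$ for the $A_4$-family and $d=1$ for the $E_8^0$-case) and walking down in $d$, I would apply Corollary \ref{cor: lattice embeddings Dynkin Martinet - strategy proof} at each step to realize an $X$ of degree $d$ as the anti-canonical model of a blow-up of some $X_{d+1}$ at a smooth point $P$, the blow-up being an isomorphism near the distinguished $A_4$. Proposition \ref{prop: StabiLemma} combined with Proposition \ref{prop: fitting argument A_n} (which applies cleanly to all $A_n$'s with $n\le 4=p-1$) then forces $P$ to be fixed by $\Aut_{X_{d+1}}^0$, and gives $\Aut_X^0=\Stab_{\Aut_{X_{d+1}}^0}(P)^0$.

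For the base case $d=5$ with $\Gamma=A_4$, I would use Theorem \ref{thm: anti-cano embeddings/coverings} to embed $X$ into $\mathbb{P}^5$ as a degree $5$ surface, and note that by \cite{MartinStadlmayr} there is a unique weak del Pezzo surface of degree $5$ carrying a full $A_4$-configuration. I would exhibit an explicit equation for this $X$ invariant under a diagonal $\mu_5$-action on $\mathbb{P}^5$, and use Proposition \ref{prop: fitting argument A_n} together with the triviality of $\Aut^0_{\widetilde X}$ (from \cite{MartinStadlmayr}) to bound $\Aut_X^0\subseteq\mu_5$, so that equality holds. This produces the first row of Table \ref{Table Eqn and Aut - char 5}.

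The inductive step for $d=4,3,2$ would enumerate the $\mu_5$-fixed points on the already-classified surface $X_{d+1}$, distinguishing the configurations of Table \ref{Table critchar5} by whether $P$ lies on a $(-2)$-curve, on the intersection of two $(-2)$-curves, or is a smooth fixed point away from them. Uniqueness of $X$ in each row follows because $\mu_5$ acts on the finite (or finite-modulo-a-curve) fixed locus, and any two choices of $P$ with the same configuration are interchanged by an automorphism of $X_{d+1}$ normalizing $\mu_5$. For $d=1$ with $\Gamma$ containing $A_4$, including the delicate $2A_4$ configuration, I would run the same induction, but with the caveat that Proposition \ref{prop: fitting argument A_n} only yields $\Aut^0_X=\Stab_{\Aut_{X_2}^0}(P)^0$ after one checks that the two $A_4$-singularities are not swapped by the connected component of the automorphism scheme; this is automatic since $\Aut_{X_2}^0$ is already infinitesimal of length dividing $5$.

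The separate case $\Gamma=E_8^0$ at $d=1$ cannot be reduced inductively, since $E_8^0$ only embeds into $E_8$. Here I would use Artin's normal form for $E_8^0$ in characteristic $5$ to write down a sextic $S\subseteq\mathbb{P}(1,1,2)$ with an $E_8^0$-singularity and exhibit the corresponding double cover in $\mathbb{P}(1,1,2,3)$ as an explicit Weierstrass-type equation, then identify a $\mathbb{G}_a$ or $\alpha_5$ action preserving it and use Proposition \ref{prop: ShortExactAutSequences}(3) together with the description of $\Aut_{\mathbb{P}(1,1,2)}$ to pin down $\Aut_X^0$ exactly. The main obstacle I anticipate is precisely this $E_8^0$-case: without an inductive lever the equations and the group scheme must be nailed down by hand, and one needs to verify that the candidate group action really fills out all of $\Aut_X^0$ (not merely a proper subgroup), which requires combining the lack of smoothness of $\Aut_X$ (forced by non-liftability of vector fields through the $E_8^0$ resolution) with an explicit upper bound coming from the stabilizer calculation on the branch sextic.
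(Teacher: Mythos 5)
Your overall architecture --- descending induction on the degree via Corollary \ref{cor: lattice embeddings Dynkin Martinet - strategy proof}, stabilizer computations through Propositions \ref{prop: StabiLemma} and \ref{prop: fitting argument A_n}, and a direct treatment of the configurations that only exist in degree $1$ --- matches the paper's. But two of your steps, as stated, would fail. First, the base case $d=5$ is not what you assert: the unique quintic del Pezzo surface with an $A_4$-singularity has $\Aut^0_{\widetilde X}$ of dimension $4$ (not trivial), and the extra infinitesimal symmetry in Table \ref{Table Eqn and Aut - char 5} is an $\alpha_5$-action that does \emph{not} fix the singular point. The paper's mechanism is that this $\alpha_5$ therefore meets $\Aut^0_{\widetilde X}$ trivially, and Proposition \ref{prop: fitting argument A_n} (which bounds the index of $\Aut^0_{\widetilde X}$ in $\Aut^0_X$ by $p$ for a single $A_{p-1}$-point) forces $\Aut_X^0=\langle\alpha_5,\Aut^0_{\widetilde X}\rangle$. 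Your claim that $\Aut^0_{\widetilde X}$ is trivial and $\Aut_X^0=\mu_5$ in degree $5$ is false, and the same misidentification would propagate to degree $4$ (where $\Aut^0_{\widetilde X}$ is two-dimensional and again $\Aut_X^0=\langle\alpha_5,\Aut^0_{\widetilde X}\rangle$) and to the $A_4+A_1$ cubic (where $\Aut_X^0=\alpha_5\rtimes\mathbb{G}_m$). Relatedly, ``enumerating $\mu_5$-fixed points on $X_{d+1}$'' only makes sense once the group has become finite; in the steps $5\to4\to3$ the paper instead argues with orbit dimensions to bound the number of possible $P$, and only from the pure-$A_4$ cubic downward is the group literally $\mu_5$.

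Second, the $2A_4$ configuration in degree $1$ cannot be reached by your induction at all: the root lattice $2A_4$ has rank $8$ and does not embed into $E_7$, so no RDP del Pezzo surface of degree $2$ carries it and there is no $X_2$ to blow up --- the hypothesis of Corollary \ref{cor: lattice embeddings Dynkin Martinet - strategy proof} that $\Gamma'$ occur in degree $d+1$ is violated. Your caveat about the two $A_4$-points being swapped addresses the wrong difficulty. The paper instead quotes Lang's classification of extremal rational elliptic surfaces to obtain the unique Weierstrass model with fibers $\mathrm{I}_5,\mathrm{I}_5,\mathrm{II}$ and exhibits an explicit $\alpha_5\rtimes\mu_5$-action on it. Your treatment of the $E_8^0$ case (direct normal form on the branch sextic, Proposition \ref{prop: ShortExactAutSequences}, explicit stabilizer computation in $\Aut_{\mathbb{P}(1,1,2)}$) is essentially what the paper does, and your identification of that case as non-inductive is correct; you simply missed that $2A_4$ is in the same boat.
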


\begin{proof} 
By Table \ref{Table critchar5}, we have $d \geq 5$.

\underline{If $d = 5$}, then $X$ is a quintic surface in $\mathbb{P}^5$. By Table \ref{Table critchar5}, the RDP configuration on $X$ is $A_4$. By the same argument as in characteristic $0$ (going through the possible configurations of four (possibly infinitely) near points in $\mathbb{P}^2$), there is a unique quintic surface in $\mathbb{P}^5$ containing an $A_4$-singularity. It is given by the equations in Table \ref{Table Eqn and Aut - char 5} (see \cite[Section 3.3., p.657]{Derenthal}) with singular point at $[0:0:0:0:0:1]$. The $\alpha_5$-action given in Table \ref{Table Eqn and Aut - char 5} preserves $X$ and does not preserve the singular point, hence $\alpha_5 \cap \Aut_{\widetilde{X}}^0 = \{ {\rm id} \}$. By Proposition \ref{prop: fitting argument A_n}, this implies that $\Aut_X^0 = \langle \alpha_5, \Aut_{\widetilde{X}}^0 \rangle$.

\underline{If $d = 4$}, then $X$ is a quartic surface in $\mathbb{P}^4$. By Table \ref{Table critchar5}, the RDP configuration on $X$ is $A_4$. By Corollary \ref{cor: lattice embeddings Dynkin Martinet - strategy proof}, $X$ is the anti-canonical model of a blow-up $Y_4$ in a smooth point $P$ of the surface $X_5$ of Case $d = 5$.

Such an $X$ is in fact unique: By \cite{MartinStadlmayr}, we have $\dim \Aut_{X_5}^0 = \dim \Aut_{\widetilde{X}_5}^0 = 4$, and since the orbit of $P$ is at most $2$-dimensional, the stabilizer of $P$ is positive-dimensional, so $H^0(\widetilde{X},T_{\widetilde{X}}) \neq 0$. By \cite{MartinStadlmayr}, there is a unique quartic del Pezzo surface with an $A_4$-singularity and whose minimal resolution has global vector fields, hence $X$ is unique.

In Table \ref{Table Eqn and Aut - char 5}, we give equations for such a surface (which we took from \cite{Derenthal}), hence this is our $X$. The singular point is located at $[0:0:0:0:1]$. The $\alpha_5$-action given in Table \ref{Table Eqn and Aut - char 5} preserves $X$ and does not preserve the singular point. Again, by Proposition \ref{prop: fitting argument A_n}, this implies that $\Aut_X^0 = \langle \alpha_5, \Aut_{\widetilde{X}}^0 \rangle$.

\underline{If $d = 3$}, then $X$ is a cubic surface in $\mathbb{P}^3$. By Table \ref{Table critchar5}, the RDP configuration on $X$ is $A_4$ or $A_4 + A_1$.
By Corollary \ref{cor: lattice embeddings Dynkin Martinet - strategy proof}, $X$ is the anti-canonical model of a blow-up $Y_3$ in a smooth point $P$ of the surface $X_4$ of Case $d = 4$.

Next, we show that there are at most two non-isomorphic such $X$. By \cite{MartinStadlmayr}, we have $\dim \Aut_{X_4}^0 = \dim \Aut_{\widetilde{X}_4}^0 = 2$. There is at most one $2$-dimensional orbit on $X_4$, hence there is at most one $X$ whose minimal resolution does not admit global vector fields. On the other hand, by \cite{MartinStadlmayr}, there is precisely one $X$ whose minimal resolution does have global vector fields.

In Table \ref{Table Eqn and Aut - char 5}, we give two (non-isomorphic) equations for cubic surfaces with an $A_4$-singularity, distinguished by their RDP configuration $\Gamma$, hence these are the two possible $X$:
    
    \begin{enumerate}
        \item If $\Gamma = A_4$, the singular point is located at $[-2:-1:2:1]$. We describe a $\mu_5$-action on $X$ in Table \ref{Table Eqn and Aut - char 5}. In this case, $\Aut_{\widetilde{X}}^0$ is trivial by \cite{MartinStadlmayr}, hence $\Aut_X^0 = \mu_5$ by Proposition \ref{prop: fitting argument A_n}. 
        \item If $\Gamma = A_4 + A_1$, the $A_4$-singularity is $[0:0:0:1]$ while the $A_1$-singularity is $[1:0:0:0]$. We describe an $\alpha_5 \rtimes \mathbb{G}_m$-action on $X$ in Table \ref{Table Eqn and Aut - char 5}. In this case, $\Aut_{\widetilde{X}}^0 = \mathbb{G}_m$ by \cite{MartinStadlmayr}, hence $\Aut_X^0 = \alpha_5 \rtimes \mathbb{G}_m$ by Proposition \ref{prop: fitting argument A_n}.
    \end{enumerate}
    
\underline{If $d = 2$}, then $X$ is a double cover of $\mathbb{P}^2$ branched over a quartic curve $Q$. 
By Table \ref{Table critchar5}, the possible RDP configurations on $X$ are $A_4$, $A_4+A_1$, and $A_4 + A_2$.
By Corollary \ref{cor: lattice embeddings Dynkin Martinet - strategy proof}, $X$ is the anti-canonical model of a blow-up $Y_2$ in a smooth point $P$ of an RDP del Pezzo surface $X_3$ of degree $3$ with an $A_4$-singularity. Since $Y_2 \to X_3$ and $Y_2 \to X$ are isomorphisms around the $A_4$-singularities, Proposition \ref{prop: fitting argument A_n} yields $\Aut_X^0 = \Aut_{Y_2}^0 = \Stab_{\Aut_{X_3}^0}(P)^0$. Hence, for each of the surfaces $X_3$ in Case $d=3$, we have to determine the points with non-trivial stabilizer.

\begin{itemize}       
    \item  If the RDP configuration on $X_3$ is $A_4$, then the points with non-trivial stabilizer under the action of $\Aut_{X_3}^0 = \mu_5$ are $[1:0:0:0],[0:1:0:0],[0:0:1:0]$, and $[0:0:0:1]$ and these fixed points are permuted by the automorphism $x_0 \mapsto x_1 \mapsto x_2 \mapsto x_3 \mapsto x_0$. Hence, there is a unique choice for $P$ up to isomorphism.

 \item If the RDP configuration on $X_3$ is $A_4 + A_1$, then there are four lines on $X_3$: The lines $\ell_1 = \{x_0 = x_1 = 0\}, \ell_2 = \{x_0 = x_2 = 0\}, \ell_3 = \{x_1 = x_2 = 0\}$ pass through the $A_4$-singularity at $[0:0:0:1]$ and the line $\ell_4 = \{x_2 = x_3 = 0\}$ passes through the $A_1$-singularity at $[1:0:0:0]$, but not through $A_4$. Moreover, $\ell_2$ and $\ell_4$ intersect in $[0:1:0:0]$.
   Straightforward calculation shows that the points with non-trivial stabilizer in $X_3 \setminus \{\ell_1,\ell_2,\ell_3\}$ are precisely those lying on the hyperplane $H = \{x_3 = 0\}$. The intersection $X_3  \cap H$ is the union of the conic $C = \{x_3 = x_0x_2 + x_1^2 = 0\}$ and the line $\ell_4$. 
    Hence, either $P \in C \setminus (\ell_1 \cup \ell_2 \cup \ell_3) = C \setminus \{[1:0:0:0],[0:0:1:0]\}$ or $P \in \ell_4 \setminus (\ell_1 \cup \ell_2 \cup \ell_3) = \ell_4 \setminus \{[1:0:0:0],[0:1:0:0]\}$. The group scheme $\Aut_{X_3}^0$ acts transitively on both loci, hence there are only two choices for $P$ up to isomorphism. In both cases, one checks that $\Aut_{Y_2}^0 = \Stab_{\Aut_{X_3}}(P)^0 = \mu_5$.
    One of the two choices of $P$ can be reduced to a previous case as follows:
    \begin{itemize}
        \item     If $P \in C \setminus \{[1:0:0:0],[0:0:1:0]\}$, then the RDP configuration on $X$ is $A_4 + A_1$. The strict transform $C'$ of $C$ on the blow-up $X'$ of $X$ in the $A_1$-singularity is a $(-1)$-curve that passes through the $(-2)$-curve over $A_1$ and which is disjoint from $A_4$. If we contract $C'$, we obtain an RDP del Pezzo surface $X_3'$ of degree $3$ which contains an $A_4$-singularity as its only singularity and such that $H^0(X_3',T_{X_3'}) \neq 0$ (by Proposition \ref{prop: Wahl Hirokado} and Blanchard's lemma). Hence $X_3'$ is isomorphic to the cubic surface with RDP configuration $A_4$ in Table \ref{Table critchar5}, and thus $X$ coincides with the surface we constructed in the previous bullet point.
    \end{itemize}
\end{itemize}
Summarizing, there are at most two RDP del Pezzo surfaces of degree $2$ which contain an $A_4$-singularity and which admit a non-trivial global vector field. Moreover, $\Aut_X^0 = \Stab_{\Aut_{X_3}^0}(P) = \mu_5$ in both cases. In Table \ref{Table Eqn and Aut - char 5}, we give two equations of such surfaces, distinguished by their RDP configuration $\Gamma$:
\begin{enumerate}
    \item If $\Gamma = A_4 + A_1$, the $A_4$-singularity is $[-2:1:2:0]$, the $A_1$-singularity is $[0:1:0:0]$, and the corresponding $\Aut_X^0 = \mu_5$-action is as in Table \ref{Table Eqn and Aut - char 5}
    \item If $\Gamma = A_4 + A_2$, the $A_4$-singularity is $[2:1:-1:0]$, the $A_2$-singularity is $[1:0:0:0]$, and the corresponding $\Aut_X^0 = \mu_5$-action is as in Table \ref{Table Eqn and Aut - char 5}.
\end{enumerate}

\underline{If $d = 1$}, then $X$ is a double cover of the quadratic cone in $\mathbb{P}^3$ branched over a sextic curve $S$. We consider separately the three cases where $X$ contains a single $A_4$-singularity (and possibly equivariant RDPs of other types), two $A_4$-singularities, and an $E_8^0$-singularity, respectively:
\begin{enumerate}
    \item[(a)] $X$ contains a single $A_4$-singularity. By Corollary \ref{cor: lattice embeddings Dynkin Martinet - strategy proof}, $X$ is the anti-canonical model of a blow-up $Y_1$ in a smooth point $P$ of an RDP del Pezzo surface $X_2$ with an $A_4$-singularity. By Proposition \ref{prop: fitting argument A_n}, we have $\Aut_X^0 = \Aut_{Y_1}^0 = \Stab_{\Aut_{X_2}^0}(P)^0$. Since $\Aut_{X_2}^0 = \mu_5$, we thus have to determine the fixed points of the $\mu_5$-action on $X_2$. 
    \begin{itemize}
        \item If the RDP configuration on $X_2$ is $A_4 + A_2$, then the fixed points of the $\mu_5$-action are $[1:0:0:0],[0:1:0:0],$ and $[0:0:1:0]$, and we recall that $[1:0:0:0]$ is the $A_2$-singularity. Hence, there are two choices for $P$. In fact, one of them does not occur, as the following argument shows:
    \begin{itemize}
        \item  
      If $P = [0:0:1:0]$, let $Q$ be the branch quartic of $X_2 \to \mathbb{P}^2$. The line $\ell = \{ y = 0\}$ is tangent to $Q$ at $[1:0:0]$ and $[0:0:1]$, hence the preimage of $\ell$ in $X_2$ consists of two smooth rational curves $C,C'$ meeting in $P$ and the $A_2$-singularity. In the minimal resolution $\widetilde{Y}_1$ of $Y_1$, their strict transforms $\widetilde{C}$ and $\widetilde{C}'$ are $(-2)$-curves, which, together with the two exceptional curves over the $A_2$-singularity, form an $A_4$-configuration of $(-2)$-curves. Thus, $X$ contains two $A_4$-singularities, contradicting our assumption. 
    \end{itemize}
        \item If the RDP configuration on $X_2$ is $A_4 + A_1$, then the fixed points of the $\mu_5$-action are $[1:0:0:1],[1:0:0:-1],[0:1:0:0],$ and $[0:0:1:0]$. The first two points are interchanged by $w \mapsto -w$ and the point $[0:1:0:0]$ is the $A_1$-singularity on $X_2$, hence we have two choices for $P$ up to isomorphism. 
        Let $Q$ be the branch quartic of $X_2 \to \mathbb{P}^2$ and recall that the $A_4$-singularity is located at $[-2:1:2:0]$. We will now show that both choices for $P$ lead to the same surface as the one in the previous bullet point:        
        \begin{itemize}
            \item If $P = [1:0:0:1]$, then the image $[1:0:0]$ of $P$ in $\mathbb{P}^2$ lies on the two bitangents $\ell_1 = \{y = 0\}$ and $\ell_2 = \{z = 0\}$ of $Q$. Let $C_i,C_i'$ be the two irreducible components of the preimage of $\ell_i$ for $i = 1,2$ and assume that $P$ lies on $C_1$ and $C_2$. On $\widetilde{Y}_1$, the strict transforms $\widetilde{C}_1$ and $\widetilde{C}_2$ are $(-2)$-curves, while the strict transforms $\widetilde{C}_1'$ and $\widetilde{C}_2'$ remain $(-1)$-curves. Thus, the RDP configuration on $X$ is $A_4 + A_2 + A_1$, where the $A_2$ is obtained from the $A_1$ of $Y_1$ by also contracting $\widetilde{C}_2$, and the $A_1$ arises from the contraction of $\widetilde{C}_1$.            
          Therefore, if we contract the image of $C_1'$ in the surface $Y_1'$ obtained from $Y_1$ by contracting $C_2$, we obtain an RDP del Pezzo surface of degree $2$ with global vector fields and containing an RDP configuration of type $A_4 + A_2$, hence this case is reduced to the previous bullet point. 
          \item If $P = [0:0:1:0]$, then the image $[0:0:1]$ of $P$ in $\mathbb{P}^2$ lies on the bitangent $\ell = \{y = 0\}$ and $P$ is the non-transversal intersection point of the two irreducible components $C$ and $C'$ of the preimage of $\ell$ in $X_2$. Thus, the strict transforms $\widetilde{C}$ and $\widetilde{C}'$ of $C$ and $C'$ on $\widetilde{Y}_1$ are $(-2)$-curves, hence the RDP configuration on $X$ is $A_4 + A_2 + A_1$, where the $A_2$ is obtained by contracting $\widetilde{C}$ and $\widetilde{C}'$. 
          The preimage $D$ of the line $\{x = 0 \}$ in $X_2$ is a cuspidal cubic with cusp at $[0:1:0:0]$. On $\widetilde{X}_2$, the strict transform of $D$ is a smooth rational curve of self-intersection $0$ by adjunction. Hence, the strict transform $\widetilde{D}$ of $D$ on $\widetilde{Y}_1$ is a $(-1)$-curve which passes through the exceptional curve over the $A_1$-singularity. Contracting the image of $\widetilde{D}$ on the surface $Y_1'$ obtained from $Y_1$ by contracting $C$ and $C'$ and resolving the $A_1$-singularity, we obtain an RDP del Pezzo surface of degree $2$ with global vector fields containing an RDP configuration of type $A_4 + A_2$, hence this case is also reduced to the previous bullet point. 
        \end{itemize}
        Summarizing, there is at most one RDP del Pezzo surface of degree $1$ with global vector fields and containing a single $A_4$-singularity. In Table \ref{Table Eqn and Aut - char 5}, we give an equation of such a surface, hence this is our $X$. The singularities of $X$ are as follows: $A_4$ at $[1:-2:2:0]$, $A_2$ at $[0:1:0:0]$, and $A_1$ at $[1:0:0:0]$.    
        The $\mu_5$-action we describe in Table \ref{Table Eqn and Aut - char 5} preserves the equation, hence $\Aut_X^0 = \mu_5$.
    \end{itemize}
    \item[(b)] $X$ contains two $A_4$-singularities. By \cite[Theorem 4.1.]{Lang2}, there is a unique RDP del Pezzo surface of degree $1$ with RDP configuration $A_4 + A_4$, namely the Weierstrass model of the rational elliptic surface with singular fibers of type ${\rm I}_5,{\rm I}_5,{\rm II}$. Its equation is given in Table \ref{Table Eqn and Aut - char 5}. The $\alpha_5 \rtimes \mu_5$-action we describe in Table \ref{Table Eqn and Aut - char 5} preserves the equation. By \cite{MartinStadlmayr}, $\Aut_{\widetilde{X}}^0$ is trivial, hence $\Aut_X^0 = \alpha_5 \rtimes \mu_5$ follows from Proposition \ref{prop: fitting argument A_n}. 
    \item[(c)] $X$ contains an $E_8^0$-singularity. By \cite{Lang2}, there are two RDP del Pezzo surfaces of degree $1$ with an RDP of type $E_8$. The one whose equation we give in Table \ref{Table Eqn and Aut - char 5} has an $E_8^0$-singularity, while the other one has an $E_8^1$-singularity (see \cite[Table 1]{Stadlmayr}). The $\alpha_5 \rtimes \mathbb{G}_m$-action we describe in Table \ref{Table Eqn and Aut - char 5} preserves the equation.  We leave it to the reader to check that $\Aut_X^0 = \Stab_{\Aut_{\mathbb{P}(1,1,2)}}(S)^0 = \alpha_5 \rtimes \mathbb{G}_m$.
\end{enumerate}
\end{proof}

\subsection{In characteristic $3$} \label{sec: char3}
By Theorem \ref{thm: equivariantmain}, we have to list all RDP configurations containing $A_2,A_5,A_8,E_6^0$, $E_6^1,E_7^0,E_8^0$ or $E_8^1$ that can occur on an RDP del Pezzo surface.

\begin{Lemma}
If $p = 3$, $\deg(X) = d$, and $X$ contains a singularity of type $A_2,A_5,A_8,E_6^0,E_6^1,E_7^0,E_8^0$ or $E_8^1$, then $d$ and the configuration $\Gamma$ of RDPs on $X$ is one of the cases in Table \ref{Table critchar3}.
\begin{table}[h!]
\centering
$
\begin{array}{|c||c||c|}
 \hline
    \text{d} & \Gamma & \subseteq \langle k_{9-d} \rangle^\perp \\ \hline \hline
   6 & A_2,  \hspace{3mm} A_2 + A_1 & \subseteq A_2 + A_1 \\ \hline
   5 & A_2, \hspace{3mm} A_2+A_1 & \subseteq A_4 \\ \hline
   4 & A_2, \hspace{3mm} A_2+A_1, \hspace{3mm} A_2+2A_1 & \subseteq D_5 \\ \hline
   3 & A_2, \hspace{3mm} A_2+A_1, \hspace{3mm} A_2+2A_1, \hspace{3mm} 2A_2, \hspace{3mm} 2A_2+A_1, \hspace{3mm} A_5, \hspace{3mm} 3A_2, \hspace{3mm} A_5+A_1, \hspace{3mm} E_6^0, \hspace{3mm} E_6^1, & \subseteq E_6 \\ \hline
   \multirow{2}{*}{$2$} & A_2, \hspace{3mm} A_2+A_1, \hspace{3mm} A_2+2A_1, \hspace{3mm} 2A_2, \hspace{3mm} A_2+3A_1, \hspace{3mm} 2A_2+A_1, \hspace{3mm} A_3+A_2, \hspace{3mm} (A_5)', \hspace{3mm} 3A_2,
   & \multirow{2}{*}{$\subseteq E_7$} \\
   & A_3+A_2+A_1, \hspace{3mm} A_4+A_2, \hspace{3mm} (A_5+A_1)', \hspace{3mm} E_6^0, \hspace{3mm} E_6^1, \hspace{3mm} A_5+A_2, \hspace{3mm} E_7^0 &  \\ \hline
   \multirow{5}{*}{$1$} & A_2, \hspace{3mm} A_2+A_1, \hspace{3mm} A_2+2A_1, \hspace{3mm} 2A_2, \hspace{3mm} A_2+3A_1, \hspace{3mm} 2A_2+A_1, \hspace{3mm} A_3+A_2, \hspace{3mm} A_5, & \multirow{5}{*}{$\subseteq E_8$} \\
   & A_2+4A_1, \hspace{3mm} 2A_2+2A_1, \hspace{3mm} 3A_2, \hspace{3mm} A_3+A_2+A_1, \hspace{3mm} A_4+A_2, \hspace{3mm} D_4+A_2, \hspace{3mm} (A_5+A_1)', &\\
   & E_6^0, \hspace{3mm} E_6^1, \hspace{3mm} 3A_2+A_1, \hspace{3mm} A_3+A_2+2A_1, \hspace{3mm} A_4+A_2+A_1, \hspace{3mm} A_5+2A_1, \hspace{3mm} A_5+A_2, & \\ 
   & D_5+A_2, \hspace{3mm} E_6^0+A_1, \hspace{3mm} E_6^1+A_1, \hspace{3mm} E_7^0, \hspace{3mm} 4A_2, \hspace{3mm} A_5+A_2+A_1, & \\
   & E_6^0+A_2, \hspace{3mm} E_6^1+A_2, \hspace{3mm} A_8, \hspace{3mm} E_8^0, \hspace{3mm} E_8^1 & \\ \hline
\end{array}
$
\caption{Non-equivariant RDP configurations in characteristic $3$}
    \label{Table critchar3}
\end{table}
 \end{Lemma}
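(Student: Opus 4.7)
The strategy is purely lattice-theoretic and mirrors the proofs of the two analogous lemmas in Sections \ref{sec: char7} and \ref{sec: char5}. Recall that the RDP configurations $\Gamma$ that can occur on an RDP del Pezzo surface $X$ of degree $d$ are in bijection with the $W(E_{9-d})$-orbits of root sublattices $\Lambda \subseteq E_{9-d}$, via the map sending $\Gamma$ to the lattice $\Lambda$ spanned by the classes of the $(-2)$-curves on the minimal resolution $\widetilde{X}$; this is the content of Lemma \ref{lemma: (-1)-curves lattice theoretic} together with the realisation of $\widetilde{X}$ as a blow-up of $\mathbb{P}^2$. Conversely, every such root sublattice really is realised by some RDP del Pezzo surface, by the usual ``almost general position'' construction. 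The superscripts on $E_6^0,E_6^1,E_7^0,E_8^0,E_8^1$ simply record the coroot type of the singularity in characteristic $3$, and Artin's classification guarantees that all of these types are realised. In particular, a configuration $\Gamma$ as in the statement occurs on some $X$ of degree $d$ if and only if the underlying root lattice $\Lambda$ embeds into $E_{9-d}$ and contains at least one of the summands $A_2,A_5,A_8,E_6,E_7,E_8$.

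Thus the task reduces to, for each $d\in\{1,\dots,6\}$, enumerating the $W(E_{9-d})$-orbits of root sublattices of $E_{9-d}$ containing such a summand. For $d=3,2,1$ the lattices $E_6, E_7, E_8$ are irreducible exceptional and the full list of maximal and sub-maximal root sublattices (with multiplicities of orbits for a given abstract type) is recorded in Dynkin's tables, specifically \cite[Table 11]{DynkinSemisimple}; from these tables one reads off exactly the configurations listed, and the primes in $(A_5)', (A_5+A_1)'$ mark the orbit of embeddings that is compatible with the Dynkin constraint on the index (i.e. the one actually realisable as a configuration of $(-2)$-curves), while the non-primed orbit does not embed in $\langle k_{9-d}\rangle^\perp$. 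For $d=6,5,4$, we use the identifications $E_3 = A_2 + A_1$, $E_4 = A_4$, $E_5 = D_5$ recalled in the preliminaries, together with the explicit enumeration of sub-root-lattices in \cite[Exercises 4.2.1, 4.6.2]{Martinet}. This immediately gives the lines $d=6,5,4$ of Table \ref{Table critchar3}; in particular, for $d \geq 7$ no $A_2$ embeds into $E_{9-d}$ (since $A_2$ is absent from $E_1$ and only $A_1$ occurs in $E_2$), which is why the table starts at $d=6$.

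The only genuinely non-mechanical step will be to make sure no configurations are missed or double-counted because of the distinction between abstract isomorphism classes of root lattices and $W(E_{9-d})$-orbits of embeddings. This is the same phenomenon that forces the primes in $(A_5)'$ and $(A_5+A_1)'$: two abstractly isomorphic sublattices can represent different orbits, and Dynkin's classification does distinguish them. This is the main obstacle, but it is resolved in every case by consulting the cited tables in \cite{DynkinSemisimple} and \cite{Martinet}, restricted in each case to sublattices containing one of $A_2,A_5,A_8,E_6,E_7,E_8$. Finally, the inclusions $\Gamma \subseteq \langle k_{9-d}\rangle^\perp = E_{9-d}$ in the last column of the table record precisely the ambient lattice used in the enumeration, and are nothing but a book-keeping device.
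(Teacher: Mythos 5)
Your proposal follows essentially the same route as the paper: reduce to classifying root sublattices of $E_{9-d}$ containing one of the relevant summands, read the answer off Dynkin's Table 11 for $d\le 3$ and Martinet's exercises for $4\le d\le 6$ (using $E_3=A_2+A_1$, $E_4=A_4$, $E_5=D_5$), and exclude $d\ge 7$ because the maximal root sublattice of $E_2$ is $A_1$. Since the lemma only asserts the necessary direction (``if $X$ contains such a singularity, then $\Gamma$ is in the table''), this is sufficient, and your enumeration matches the paper's.

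Two caveats. First, your reading of the primes is not the paper's: the prime on $(A_5)'$ and $(A_5+A_1)'$ marks configurations whose associated root lattice admits \emph{two non-conjugate embeddings} into $E_{9-d}$ --- both of which do land in $\langle k_{9-d}\rangle^\perp = E_{9-d}$; it does not single out a ``realisable'' orbit against one that fails to embed. This does not alter the list of configurations, but the distinction matters downstream, where Corollary \ref{cor: lattice embeddings Dynkin Martinet - strategy proof} has to treat exactly these multiply-embedded configurations separately. Second, the blanket claim that every root sublattice of $E_{9-d}$ is realised by some RDP del Pezzo surface via points in almost general position is false in general (e.g.\ $7A_1\subseteq E_7$ and $8A_1\subseteq E_8$ are realised only in characteristic $2$); fortunately the converse direction is not needed to prove the lemma as stated, and the realisability of the listed configurations is established case by case in the later sections anyway.
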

 
 \begin{proof}
 The maximal root lattice contained in $E_2$ is isomorphic to $A_1$, hence none of the root lattices in the statement of the lemma embed into $E_{9-d}$ with $d \geq 7$. The list for $4 \leq d \leq 6$ follows from \cite[Exercise 4.2.1, 4.6.2]{Martinet} and the one for $7 \leq d \leq 9$ follows from \cite[Table 11]{DynkinSemisimple} (note that the lattice $A_6 + A_2$ in Dynkin's table of root lattices in $E_8$ should be $E_6 + A_2$), where we marked those RDP configurations whose associated root lattice embeds in two non-conjugate ways into $E_{9-d}$ by a prime $'$.
 \end{proof}

\begin{Theorem} \label{thm: mainchar3}
Assume that $p = 3$ and $X$ contains an RDP of type $A_2,A_5,A_8,E_6^0,E_6^1,E_7^0,E_8^0$, or $E_8^1$. Then, $H^0(X,T_X) \neq 0$ if and only if $X$ is given by an equation as in Tables \ref{Table Eqn and Aut - char 3, deg at least 4}, \ref{Table Eqn and Aut - char 3, deg 3}, \ref{Table Eqn and Aut - char 3, deg 2} and \ref{Table Eqn and Aut - char 3, deg 1}. Moreover, $\Aut_X^0$ is as given in these tables, so that $\Aut_{\widetilde{X}}^0 \subsetneq \Aut_X^0$ and even $h^0(X,T_X) > h^0(\widetilde{X},T_{\widetilde{X}})$.
\end{Theorem}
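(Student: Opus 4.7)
The strategy is the direct analogue of the arguments for $p=7$ and $p=5$: we induct on the degree $d$, starting at the highest value for which an excluded RDP configuration $\Gamma$ appears in Table~\ref{Table critchar3} (namely $d=6$) and descending to $d=1$, at each step using Theorem~\ref{thm: lattice theoretic reduction criterion} together with Corollary~\ref{cor: lattice embeddings Dynkin Martinet - strategy proof} to realize $X$ as the anti-canonical model of a blow-up $Y_d \to X_{d+1}$ of an already-classified RDP del Pezzo surface $X_{d+1}$ of degree $d+1$ in a smooth point $P$, with $Y_d \dashrightarrow X_{d+1}$ an isomorphism around the non-equivariant locus. Then Proposition~\ref{prop: StabiLemma} combined with Proposition~\ref{prop: fitting argument A_n} (and its extension to $A_5$ in char $3$ from Remark~\ref{rem: A5}) identifies $\Aut_X^0$ with $\Stab_{\Aut_{X_{d+1}}^0}(P)^0$, so the classification reduces to determining the orbits of smooth points with positive-dimensional stabilizer on the finitely many higher-degree surfaces $X_{d+1}$ already in the tables.

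\textbf{Base cases and small degrees.} For $d=6$, by Table~\ref{Table critchar3} we must classify sextic del Pezzo surfaces with an $A_2$ (resp.\ $A_2+A_1$) RDP, and these can be written down directly inside $\mathbb{P}^6$ (or as blow-ups of $\mathbb{P}^2$ in infinitely near points in almost general position); the $\Aut_X^0$ is then computed via Proposition~\ref{prop: ShortExactAutSequences}(1) as $\Stab_{\PGL_7}(X)^0$, using that $\mu_3$ or $\alpha_3$ acts by weighted diagonal matrices. For $d=5,4,3$ the corollary applies to every configuration in Table~\ref{Table critchar3} except $\Gamma=A_3$ in degree $4$ (which is equivariant, hence irrelevant) and so the inductive step just runs: one determines, for each $X_{d+1}$ in the tables already built, the loci of smooth points fixed by a one-parameter subgroup, up to the action of $\Aut_{X_{d+1}}^0$. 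Because $\dim \Aut^0$ typically drops by one at each blow-up, this bookkeeping is bounded and terminates.

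\textbf{Degrees $2$ and $1$ and the exceptional configurations.} The cases excluded from Corollary~\ref{cor: lattice embeddings Dynkin Martinet - strategy proof} must be handled by hand: for $d=2$ these are the configurations containing $(A_5)'$, $A_3+A_1$-type pieces, or $4A_1,3A_1$, and for $d=1$ those containing $A_7,2A_3,A_5+A_1,A_3+2A_1,4A_1$. For these we use Theorem~\ref{thm: anti-cano embeddings/coverings} directly and write $X$ as a double cover of $\mathbb{P}^2$ (resp.\ of $\mathbb{P}(1,1,2)$) branched over a quartic (resp.\ sextic) with prescribed simple singularities; the classification of such branch curves in characteristic $3$ is a finite calculation, and the $\Aut_X^0$-action lifts the $\Stab$-group computed in Proposition~\ref{prop: ShortExactAutSequences}(2),(3). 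Finally, the ``terminal'' singularities $E_7^0,E_8^0,E_8^1,A_8$ in degree $1$ pin down $X$ essentially uniquely via Lang's classification \cite{Lang2} of extremal rational elliptic Weierstrass models in characteristic $3$, so these surfaces can be written down explicitly and $\Aut_X^0$ read off from $\Stab_{\Aut_{\mathbb{P}(1,1,2)}}(S)^0$.

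\textbf{Redundancy elimination and $\Aut_X^0$.} At each step there is the danger of producing the same $X$ twice via two different contractions $\widetilde{X}\to \widetilde{X}_{d+1}$; as in the $p=5$ proof, one resolves this by finding, on candidate $X$, an auxiliary $(-1)$-curve meeting a component of a non-equivariant RDP's exceptional locus, contracting it to merge the singularity into a more complicated one, and checking by Theorem~\ref{thm: lattice theoretic reduction criterion}(3) that the resulting surface already appears in a previously treated case. The main obstacle is sheer bookkeeping: the $56$ families in Tables~\ref{Table Eqn and Aut - char 3, deg at least 4}--\ref{Table Eqn and Aut - char 3, deg 1} must each be exhibited by explicit equations, checked for the claimed singularities and group action, and shown to exhaust the stabilizer loci on the preceding-degree surfaces. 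Once an equation and a candidate action by a subgroup $G\subseteq \Aut_X^0$ is in hand, the identity $\Aut_X^0 = \langle G,\Aut_{\widetilde{X}}^0\rangle$ follows from Proposition~\ref{prop: fitting argument A_n} (using Remark~\ref{rem: A5} whenever an $A_5$ appears), together with the classification of $\Aut_{\widetilde{X}}^0$ from \cite{MartinStadlmayr}. In each table entry one then verifies $h^0(X,T_X)>h^0(\widetilde{X},T_{\widetilde{X}})$ by comparing $\dim \Aut_X^0$ against $\dim \Aut_{\widetilde{X}}^0$, which finishes the proof.
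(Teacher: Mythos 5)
Your overall framework---descend in degree via Theorem \ref{thm: lattice theoretic reduction criterion} and Corollary \ref{cor: lattice embeddings Dynkin Martinet - strategy proof}, identify $\Aut_X^0$ with a stabilizer via Propositions \ref{prop: StabiLemma} and \ref{prop: fitting argument A_n}, then exhibit explicit equations---is indeed what the paper does for degrees $6$, $5$, $4$ and for the cubics with a single $A_2$. But there is a genuine gap in your claim that for $d=5,4,3$ ``the inductive step just runs.'' The hypothesis of Corollary \ref{cor: lattice embeddings Dynkin Martinet - strategy proof} is that the non-equivariant configuration $\Gamma'$ \emph{occurs on an RDP del Pezzo surface of degree $d+1$}, and by Table \ref{Table critchar3} the only non-equivariant configurations in degree $4$ are $A_2$, $A_2+A_1$, $A_2+2A_1$. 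Hence the degree-$3$ configurations $2A_2$, $2A_2+A_1$, $3A_2$, $A_5$, $A_5+A_1$, $E_6^0$, $E_6^1$ --- seven of the nine entries of Table \ref{Table Eqn and Aut - char 3, deg 3} --- cannot be reached by blowing down to degree $4$, and your induction produces nothing for them. (Blowing down while preserving only \emph{one} of two $A_2$'s does not help: Proposition \ref{prop: fitting argument A_n} then only bounds the index of the stabilizer, since $A_2=A_{p-1}$ is exactly the borderline case in characteristic $3$.) The paper treats these cases by direct normal forms for singular cubic surfaces following Roczen. The same problem recurs in degrees $2$ and $1$: your list of ``cases excluded from the Corollary'' records only the configurations whose root lattice embeds non-uniquely, but omits the many configurations that simply do not occur one degree higher (e.g.\ $A_3+A_2$, $A_4+A_2$, $A_5+A_2$, $E_7^0$ in degree $2$; $4A_2$, $D_4+A_2$, $E_6^0+A_2$, $A_8$ in degree $1$).

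Relatedly, for degrees $2$ and $1$ the paper abandons the induction altogether and instead classifies all branch quartics (resp.\ sextics) that are invariant under one of the finitely many conjugacy classes of $\mu_3$- and $\alpha_3$-actions on $\mathbb{P}^2$ (resp.\ on $\mathbb{P}(1,1,2)$), obtained by integrating the Jordan normal forms of vector fields. That invariance condition is what makes the problem a finite computation; your phrase ``the classification of such branch curves in characteristic $3$ is a finite calculation'' hides exactly this point, since several configurations (for instance quartics with $2A_2$) move in positive-dimensional families and only the invariance requirement cuts them down to the one-parameter families appearing in Tables \ref{Table Eqn and Aut - char 3, deg 2} and \ref{Table Eqn and Aut - char 3, deg 1}. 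To repair the proposal you would need to (i) determine precisely which configurations in each degree actually descend, and (ii) supply an independent finiteness mechanism --- invariance under an integrated $\mu_3$/$\alpha_3$-action, or normal forms \`a la Roczen and Lang --- for all those that do not.
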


\begin{proof}
By Table \ref{Table critchar3}, we have $d \leq 6$.

\underline{If $d = 6$}, then $X$ is a sextic surface in $\mathbb{P}^6$. The RDP configuration $\Gamma$ on $X$ is either $A_2$ or $A_2 + A_1$ by Table \ref{Table critchar3}. In both cases, $X$ is uniquely determined by its singularities, by the same argument as in characteristic $0$, that is, by checking the possible configurations of infinitely near points in $\mathbb{P}^2$:

\begin{enumerate}
    \item[(a)] If $\Gamma = A_2$ then, by \cite[Section 3.2.]{Derenthal}, $X$ is given by the equations in Table \ref{Table Eqn and Aut - char 3, deg at least 4}. The $A_2$-singularity is $[1:0:0:0:0:0:0]$ and we give an $\alpha_3$-action on $X$ which does not preserve the singularity in Table \ref{Table Eqn and Aut - char 3, deg at least 4}. By Proposition \ref{prop: fitting argument A_n}, this implies $\Aut_X^0 = \langle \alpha_3, \Aut_{\widetilde{X}}^0 \rangle$.
    \item[(b)] If $\Gamma = A_2 + A_1$, then, by \cite[Appendix, p.3]{KikuchiNakano},  $X$ is given by the equation in Table \ref{Table Eqn and Aut - char 3, deg at least 4}. The $A_2$-singularity is $[0:1:0:0:0:0:0]$ and the $A_1$-singularity is $[0:0:0:0:0:0:1]$. We give an $\alpha_3$-action on $X$ which does not fix the $A_2$-singularity in Table \ref{Table Eqn and Aut - char 3, deg at least 4}.
     By Proposition \ref{prop: fitting argument A_n}, this implies $\Aut_X^0 = \langle \alpha_3, \Aut_{\widetilde{X}}^0 \rangle$.
\end{enumerate}

\underline{If $d = 5$}, then $X$ is a quintic surface in $\mathbb{P}^5$. By Table \ref{Table critchar3}, the RDP configuration $\Gamma$ on $X$ is either $A_2$ or $A_2 + A_1$. As in the Case $d = 6$, $X$ is uniquely determined by $\Gamma$, as can be seen by checking the possible configurations of four infinitely near points in $\mathbb{P}^2$ (see e.g. \cite[Section 8.5.1, p.430]{Dolgachev-classical}).

\begin{enumerate}
    \item[(a)] If $\Gamma = A_2$, then, by \cite[Section 3.3.]{Derenthal}, $X$ is given by the equation in Table \ref{Table Eqn and Aut - char 3, deg at least 4}. The $A_2$-singularity is $[0:0:1:0:0:0]$ and we give an $\alpha_3$-action on $X$ which does not preserve the singularity in Table \ref{Table Eqn and Aut - char 3, deg at least 4}.  By Proposition \ref{prop: fitting argument A_n}, this implies $\Aut_X^0 = \langle \alpha_3, \Aut_{\widetilde{X}}^0 \rangle$.
    \item[(b)] If $\Gamma = A_2 + A_1$, then by \cite[Appendix, p.5]{KikuchiNakano} is given by the equation in Table \ref{Table Eqn and Aut - char 3, deg at least 4}. The $A_2$-singularity is $[0:1:0:0:0:0]$ and the $A_1$-singularity is $[0:0:0:0:0:1]$. We give an $\alpha_3$-action on $X$ which does not fix the $A_2$-singularity in Table \ref{Table Eqn and Aut - char 3, deg at least 4}.
    By Proposition \ref{prop: fitting argument A_n}, this implies $\Aut_X^0 = \langle \alpha_3, \Aut_{\widetilde{X}}^0 \rangle$.
\end{enumerate}

\underline{If $d = 4$}, then $X$ is a quartic surface in $\mathbb{P}^4$. By Table \ref{Table critchar3}, the RDP configuration on $X$ is $A_2$, $A_2+A_1$, or $A_2 + 2A_1$. By Corollary \ref{cor: lattice embeddings Dynkin Martinet - strategy proof}, $X$ is the anti-canonical model of a blow-up $Y_4$ in a smooth point $P$ of an RDP del Pezzo surface $X_5$ of degree $5$ with an $A_2$-singularity. 

Next, we show that there are at most three non-isomorphic such $X$. By \cite{MartinStadlmayr}, there are at most two $X$ whose minimal resolution has global vector fields. Assume that $\Aut_{\widetilde{X}}^0 = \{{\rm id}\}$. Then, by Proposition \ref{prop: StabiLemma} and Proposition \ref{prop: fitting argument A_n}, the stabilizer of $P$ is $0$-dimensional, hence $\dim \Aut_{\widetilde{X}_5}^0 = \dim \Aut_{X_5}^0 \leq 2$. Hence, by \cite{MartinStadlmayr}, $X_5$ is the RDP del Pezzo surface of degree $5$ whose RDP configuration is $A_2$. For this surface, $\dim \Aut_{X_5}^0 = 2$, hence there is at most one $2$-dimensional orbit on $X_5$, so there is at most one $X$ whose minimal resolution does not have global vector fields.
In Table \ref{Table Eqn and Aut - char 3, deg at least 4}, we give equations for three such surfaces, distinguished by their RDP configuration $\Gamma$:

\begin{enumerate}
    \item If $\Gamma = A_2$, the $A_2$-singularity is at $[1:1:1:1:1]$. We give a $\mu_3$-action on $X$ in Table \ref{Table Eqn and Aut - char 3, deg at least 4}, while the group scheme $\Aut_{\widetilde{X}}^0$ is trivial by \cite{MartinStadlmayr}. By Proposition \ref{prop: fitting argument A_n}, this implies $\Aut_X^0 = \mu_3$. 
    
    \item If $\Gamma = A_2 + A_1$, the $A_2$-singularity is $[1:0:0:0:0]$ while the $A_1$-singularity is $[0:0:0:0:1]$. We give a $\alpha_3 \rtimes \mathbb{G}_m$-action on $X$ in Table \ref{Table Eqn and Aut - char 3, deg at least 4} , while $\Aut_{\widetilde{X}}^0 = \mathbb{G}_m$ by \cite{MartinStadlmayr}, hence $\Aut_X^0 = \alpha_3 \rtimes \mathbb{G}_m$ by Proposition \ref{prop: fitting argument A_n}. 
    
    \item If $\Gamma = A_2 + 2A_1$, the $A_2$-singularity is $[0:0:0:0:1]$ and the two $A_1$-singularities are $[0:1:0:0:0]$ and $[0:0:1:0:0]$. We give a $\alpha_3 \rtimes \mathbb{G}_m^2$-action on $X$ in Table \ref{Table Eqn and Aut - char 3, deg at least 4} , while $\Aut_{\widetilde{X}}^0 = \mathbb{G}_m^2$ by \cite{MartinStadlmayr}, hence $\Aut_X^0 = \alpha_3 \rtimes \mathbb{G}_m^2$ by Proposition \ref{prop: fitting argument A_n}. 

\end{enumerate}

\underline{If $d = 3$}, then $X$ is a cubic surface in $\mathbb{P}^3$. By Table \ref{Table critchar3}, $X$ contains either a single $A_2$, two $A_2$s, three $A_2$s, one $A_5$, one $E_6^0$, or one $E_6^1$. In the following, we consider these six cases separately.

\begin{enumerate}
    \item[(a)] $X$ contains a single $A_2$-singularity. By Corollary \ref{cor: lattice embeddings Dynkin Martinet - strategy proof}, $X$ is the anti-canonical model of a blow-up $Y_3$ in a smooth point $P$ of an RDP del Pezzo surface $X_4$ with an $A_2$-singularity. 
    More precisely, since $X \dashrightarrow X_4$ is an isomorphism around the only non-equivariant RDP on $X$ and all other singularities of $X$ are $A_1$-singularities by Table \ref{Table critchar3}, Proposition \ref{prop: fitting argument A_n} and Proposition \ref{prop: StabiLemma} imply that $\Aut_X^0 = \Aut_{Y_3}^0 = \Stab_{\Aut_{X_4}^0}(P)^0$. In particular, $P$ is a point with non-trivial stabilizer on one of the surfaces $X_4$ in Table \ref{Table Eqn and Aut - char 3, deg at least 4}. 
    Before we go on, note that by \cite{MartinStadlmayr}, the group scheme $\Aut_{\widetilde{X}}^0$ is trivial, hence the stabilizer of $P$ is $0$-dimensional.
    \begin{itemize}
        \item Assume the RDP configuration on $X_4$ is $A_2 + 2A_1$. In this case, $\Aut_{X_4}^0$ is $2$-dimensional, so there is at most one $2$-dimensional orbit for this action, hence there is at most one choice for $P$ up to isomorphism.
        \item Assume the RDP configuration on $X_4$ is $A_2 + A_1$. The surface $X_4$ contains the six lines $\ell_1 = \{x_0 = x_2 = x_3 = 0\}, \ell_2 = \{x_0 = x_2 = x_4 = 0\}, \ell_3 = \{x_0 = x_3 = x_1 + x_4 = 0\}, \ell_4 = \{x_1 = x_2 = x_3 = 0\}, \ell_5 = \{x_1 = x_2 = x_4 = 0\}, \ell_6 = \{x_1 = x_3 = x_4 = 0\}$. The lines $\ell_4,\ell_5,$ and $\ell_6$ pass through the $A_2$-singularity at $[1:0:0:0:0]$ and the others do not. The lines $\ell_1$ and $\ell_4$ pass through the $A_1$-singularity at $[0:0:0:0:1]$ and the others do not.
        Using the description of the $\Aut_{X_4}^0$-action in Table \ref{Table Eqn and Aut - char 3, deg at least 4}, one easily checks that the points with non-trivial and $0$-dimensional stabilizer on $X_4$ are precisely those on $\ell_2$ and those on $\ell_3$, except for $[0:1:0:0:0],[0:0:1:0:0], [0:0:0:1:0],$ and $[0:1:0:0:-1]$. Since the automorphism $x_2 \leftrightarrow x_3, x_4 \leftrightarrow -x_4-x_1$ interchanges the two lines $\ell_2$ and $\ell_3$ and $\mathbb{G}_m \subseteq \Aut_{X_4}$ acts transitively on the locus of points with $0$-dimensional stabilizer on each of $\ell_2$ and $\ell_3$, there is a unique choice for $P$ up to isomorphism. We will now reduce this case to the previous bullet point.
        \begin{itemize}
            \item Without loss of generality, assume that $P \in \ell_2 \setminus \{[0:1:0:0:0],[0:0:0:1:0]\}$. Then, the strict transform of $\ell_2$ on $Y_3$ is a $(-2)$-curve and the RDP configuration on $X$ is $A_2 + 2A_1$. Since $\ell_3$ is disjoint from $\ell_2$, it remains a $(-1)$-curve on $X$, hence we can contract it to obtain a realization of $X$ as a blow-up of an RDP del Pezzo surface $X_4'$ with RDPs of type $A_2 + 2A_1$. Hence, this case is reduced to the previous bullet point. 
        \end{itemize}
        \item Assume the RDP configuration on $X_4$ is $A_2$. Using the description of the $\Aut_{X_4}^0 = \mu_3$-action given in Table \ref{Table Eqn and Aut - char 3, deg at least 4}, we see that the points $P$ with non-trivial stabilizer on $X_4$ are $[1:0:0:0:0],[0:1:0:0:0],[0:0:1:0:0],[0:0:0:1:0],$ and $[0:0:0:0:1]$. The surface $X_4$ admits the two involutions $x_0 \leftrightarrow x_1$ and $(x_0,x_1) \leftrightarrow (x_2,x_3)$. Hence, blowing up any of the first four points leads to the same surface. In fact, we already treated the resulting surface, as the following argument shows:
        \begin{itemize}
            \item Assume without loss of generality that $P = [1:0:0:0:0]$. There are two lines on $X_4$ passing through $P$, namely $\ell_1 = \{x_1 = x_2 = x_4 = 0\}$ and $\ell_2 = \{x_1 = x_3 = x_4 = 0\}$, and both of these lines do not pass through the $A_2$-singularity at $[1:1:1:1:1]$. Their strict transforms on $Y_3$ are disjoint $(-2)$-curves. Thus, the RDP configuration on $X$ is $A_2 + 2A_1$. Now, the conic $C = \{x_1 = x_2 + x_3 = x_0x_4 - x_3^2 = 0\}$ meets $\ell_1$ and $\ell_2$ transversally at $P$ and does not pass through $[1:1:1:1:1]$, hence we can contract the image of the strict transform of $C$ in $X$ and obtain an RDP del Pezzo surface $X_4'$ with RDP configuration $A_2 + 2A_1$ and with global vector fields, hence $X_4'$ is the surface in the first bullet point.
        \end{itemize}
    \end{itemize}
    Summarizing, there are at most two RDP del Pezzo surfaces of degree $3$ with global vector fields and containing a single $A_2$-singularity. Moreover, $\Aut_X^0 = \Stab_{\Aut_{X_4}^0}(P)^0 = \mu_3$.
    In Table \ref{Table Eqn and Aut - char 3, deg 3}, we give equations for two such surfaces, distinguished by their RDP configuration $\Gamma$:
    \begin{enumerate}
        \item[(1)] If $\Gamma = A_2$, the $A_2$-singularity is at $[1:1:-1:-1]$. Clearly, the $\mu_3$-action we give preserves the equation.
        \item[(2)] If $\Gamma = A_2 + 2A_1$, the $A_2$-singularity is at $[1:-1:-1:1]$ and the two $A_1$-singularities are at $[0:1:0:0]$ and $[0:0:1:0]$. Again, the $\mu_3$-action we give preserves the equation. 
    \end{enumerate}
    \item[(b)] $X$ contains two $A_2$-singularities. By Table \ref{Table critchar3}, the RDP configuration $\Gamma$ on $X$ is $2A_2$ or $2A_2 + A_1$. Simplifying the normal form of Roczen given in \cite{Roczencubic}, we obtain the equations given in Table \ref{Table Eqn and Aut - char 3, deg 3}. In particular, there is a $1$-dimensional family of $X$ with $\Gamma = 2A_2$ and a unique $X$ with $\Gamma = 2A_2 + A_1$. 
    \begin{enumerate}
        \item[(1)] If $\Gamma = 2A_2$, the two $A_2$-singularities are at $[0:0:1:0]$ and $[0:0:0:1]$. The two $\alpha_3$-actions and the $\Aut_{\widetilde{X}}^0 = \mathbb{G}_m$-action we give in Table \ref{Table Eqn and Aut - char 3, deg 3} preserve the equation. Each of the $\alpha_3$-actions fixes one of the $A_2$-singularities and does not preserve the other one, hence Proposition \ref{prop: fitting argument A_n} shows $\Aut_X^0 = \langle \alpha_3,\alpha_3, \mathbb{G}_m \rangle$.
            \item[(2)] If $\Gamma = 2A_2 + A_1$, the two $A_2$-singularities are at $[0:0:1:0]$ and $[0:0:0:1]$ and the $A_1$-singularity is at $[0:1:0:0]$. The two $\alpha_3$-actions and the $\Aut_{\widetilde{X}}^0 = \mathbb{G}_m$-action we describe in Table \ref{Table Eqn and Aut - char 3, deg 3} preserve the equation. By the same argument as in (1), we have $\Aut_X^0 = \langle \alpha_3,\alpha_3, \mathbb{G}_m \rangle$.
    \end{enumerate}
    \item[(c)] $X$ contains three $A_2$-singularities. Again, we can simplify the normal form of Roczen given in \cite{Roczencubic} and obtain the equation in Table \ref{Table Eqn and Aut - char 3, deg 3}, which admits $A_2$-singularities at $[0:1:0:0],[0:0:1:0],$ and $[0:0:0:1]$.
    In Table \ref{Table Eqn and Aut - char 3, deg 3}, we give an action of $\alpha_3^3 \rtimes \mathbb{G}_m^2$ on $X$. By \cite{MartinStadlmayr}, we have $\Aut_{\widetilde{X}}^0 = \mathbb{G}_m^2$.
    Each of the the three factors of the $\alpha_3^3$-action preserves two of the $A_2$-singularities and moves the other one, hence Proposition \ref{prop: fitting argument A_n} yields $\Aut_X^0 = \alpha_3^3 \rtimes \mathbb{G}_m^2$. 
    \item[(d)] $X$ contains an $A_5$-singularity. As above, we simplify Roczen's normal form \cite{Roczencubic} to the two equations in Table \ref{Table Eqn and Aut - char 3, deg 3}.
    Let $\Gamma$ be the RDP configuration on $X$:
    \begin{enumerate}      
    \item[(1)] If $\Gamma = A_5$, then the $A_5$-singularity is at $[0:0:0:1]$. The $\alpha_3$-action we describe preserves the equation and does not fix the $A_5$-singularity. By \cite{MartinStadlmayr}, we have $\Aut_{\widetilde{X}}^0 = \mathbb{G}_a \rtimes \mu_3$ and we describe this action in terms of the equation in Table \ref{Table Eqn and Aut - char 3, deg 3}. By Remark \ref{rem: A5}, we have $\Aut_X^0 = \langle \alpha_3,\mathbb{G}_a \rtimes \mu_3 \rangle$.  
        \item[(2)] If $\Gamma = A_5 + A_1$, then the $A_5$-singularity is at $[0:0:0:1]$ and the $A_1$-singularity is at $[0:0:1:0]$. The $\alpha_3$-action we describe preserves the equation. By \cite{MartinStadlmayr}, we have $\Aut_{\widetilde{X}}^0 = \mathbb{G}_a \rtimes \bbG_m$ and we describe this action in terms of the equation in Table \ref{Table Eqn and Aut - char 3, deg 3}. By Remark \ref{rem: A5}, we have $\Aut_X^0 = \langle \alpha_3,\mathbb{G}_a \rtimes \bbG_m \rangle$.  
    \end{enumerate}
    
    \item[(e)] $X$ contains an $E_6^0$-singularity. By \cite[Case C3]{Roczencubic}, $X$ is given by the equation in Table \ref{Table Eqn and Aut - char 3, deg 3} and the singularity is at $[0:0:0:1]$. 
    In Table \ref{Table Eqn and Aut - char 3, deg 3}, we give an action of a group scheme $G$ of order $27$ such that no subgroup scheme of $G$ lifts to $\widetilde{X}$, as well as the action of $\Aut_{\widetilde{X}}^0 = \mathbb{G}_a^2 \rtimes \mathbb{G}_m$ in terms of the equation.
We leave it to the reader to check that these two actions generate $\Aut_X^0 = \Stab_{{\rm PGL}_4}(X)^0$.
    
    \item[(f)] $X$ contains an $E_6^1$-singularity.  By \cite{Roczencubic}, $X$ is given by the equation in Table \ref{Table Eqn and Aut - char 3, deg 3} and the singularity is at $[0:0:0:1]$.  In Table \ref{Table Eqn and Aut - char 3, deg 3}, we give a $\mu_3$-action that does not preserve the singular point, as well as the action of $\Aut_{\widetilde{X}}^0 = \mathbb{G}_a^2$ in terms of the equation.
We leave it to the reader to check that these two actions generate $\Aut_X^0 = \Stab_{{\rm PGL}_4}(X)^0$.
\end{enumerate}

\underline{If $d = 2$} , then $X$ is a double cover of $\mathbb{P}^2$ branched over a quartic curve $Q$.
In this case, we will take a slightly different approach which will turn out to be more economical than using Corollary \ref{cor: lattice embeddings Dynkin Martinet - strategy proof}. Namely, we classify all possible $Q$ with global vector fields. If $Q$ admits a global vector field, then it also admits an additive or multiplicative global vector field. This vector field is induced by a vector field on $\mathbb{P}^2$. Up to conjugation, there are two non-zero vector fields $D$ on $\mathbb{P}^2$ with $D^3 = D$ and two non-zero vector fields with $D^3 = 0$. They correspond to the following four matrices in Jordan normal form in the Lie algebra of $\PGL_3$: 
$$
\left(
\begin{matrix} 
0 & 0 & 0 \\
0 & 1 & 0 \\
0 & 0 & -1
\end{matrix}
\right),
\quad
\left(
\begin{matrix} 
0 & 0 & 0 \\
0 & 0 & 0 \\
0 & 0 & 1
\end{matrix}
\right),
\quad
\left(
\begin{matrix} 
0 & 1 & 0 \\
0 & 0 & 0 \\
0 & 0 & 0
\end{matrix}
\right),
\quad
\left(
\begin{matrix} 
0 & 1 & 0 \\
0 & 0 & 1 \\
0 & 0 & 0
\end{matrix}
\right)
$$
Integrating the corresponding vector fields (see e.g. \cite[Proposition 3.1]{Tziolas}), we obtain the following four $\mu_3$- and $\alpha_3$-actions on $\bbP^2$:
\begin{enumerate}
    \item[(a)] $\mu_3: [x:y:z] \mapsto [x:\lambda y: \lambda^{2} z]$
    \item[(b)] $\mu_3: [x:y:z] \mapsto [x:y: \lambda z]$
    \item[(c)] $\alpha_3: [x:y:z] \mapsto [x + \varepsilon y: y : z]$
    \item[(d)] $\alpha_3: [x:y:z] \mapsto [x + \varepsilon y - \varepsilon^2 z : y + \varepsilon z : z]$
\end{enumerate}
For each of these actions, we will now classify the quartics that are invariant under it:
\begin{enumerate}
    \item[(a)] There are three types of quartics which are invariant under this $\mu_3$-action, namely
    \begin{eqnarray*}
    ax^4 + bx^2yz + cxy^3 + dxz^3 + ey^2z^2 \\
    ax^3y + bx^2z^2 + cxy^2z + dy^4 + eyz^3 \\
    ax^3z + bx^2y^2 + cxyz^2 + dy^3z + ez^4
    \end{eqnarray*}
    These families are identified by permuting $x,y$ and $z$, so it suffices to study the first one.
    Now, we simplify this equation and identify the singularities. We sort the classification according to the number of coefficients that are $0$. Since $Q$ is reduced, at least $2$ coefficients are non-zero. Except in the case where $c = d = 0$, we can scale three of the non-zero coefficients to $1$.
    \begin{itemize}
        \item If three of the coefficients are $0$, the other two can be scaled to $1$. The fact that $Q$ is reduced leaves us with the following three cases, after using the symmetry $y \leftrightarrow z$: 
        \begin{eqnarray}
        & x^4 + y^2z^2  \label{eq: threecoeffszero1} \\
        & x^2yz + xy^3  \label{eq: threecoeffszero2}\\
        & x^2yz + y^2z^2  \label{eq: threecoeffszero3}
        \end{eqnarray}
        In Case \eqref{eq: threecoeffszero1}, $X$ has two $A_3$-singularities, one at $[0:1:0:0]$ and one at $[0:0:1:0]$. In particular, $X$ contains only equivariant RDPs, so it does not appear in Table \ref{Table Eqn and Aut - char 3, deg 2}.
        
        In Case \eqref{eq: threecoeffszero2}, $X$ has a $D_6$-singularity at $[0:0:1:0]$ and an $A_1$-singularity at $[1:0:0:0]$. As before, $X$ contains only equivariant RDPs, so it does not appear in Table \ref{Table Eqn and Aut - char 3, deg 2}.
        
        In Case \eqref{eq: threecoeffszero3}, $X$ has an $A_1$-singularity at $[1:0:0:0]$ and two $A_3$-singularities at $[0:1:0:0]$ and $[0:0:1:0]$. Again, $X$ contains only equivariant RDPs, so it does not appear in Table \ref{Table Eqn and Aut - char 3, deg 2}. 
        
        \item If two of the coefficients are $0$, we get the following cases, again using scaling and the symmetry $y \leftrightarrow z$:
        \begin{eqnarray}
        & x^4 + ax^2yz + y^2z^2 \label{eq: twocoeffszero1}\\
        & x^4 + x^2yz + xy^3  \label{eq: twocoeffszero2}\\
       &  x^4 + y^2z^2 + xy^3  \label{eq: twocoeffszero3}\\
       &  x^2yz + y^2z^2 + xy^3 \label{eq: twocoeffszero4}\\
        & x^2yz + xy^3 + xz^3 \label{eq: twocoeffszero5}\\
       &  xy^3 + xz^3 + y^2z^2 \label{eq: twocoeffszero6}
        \end{eqnarray}
        In Case \eqref{eq: twocoeffszero1}, we must have $a^2 \neq 1$, otherwise $Q$ is a double conic. Then, $Q$ is the union of two conics, tangent at the points $[0:1:0]$ and $[0:0:1]$. The singularities of $X$ over these two points are two $A_3$-singularities, hence $X$ contains only equivariant RDPs and does not appear in Table \ref{Table Eqn and Aut - char 3, deg 2}.
        
        In Case \eqref{eq: twocoeffszero2}, $X$ has a $D_6$-singularity at $[0:0:1:0]$, hence $X$ contains only equivariant RDPs and does not appear in Table \ref{Table Eqn and Aut - char 3, deg 2}. 
        
        In Case \eqref{eq: twocoeffszero3}, $X$ has an $A_3$-singularity at $[0:0:1:0]$ and an $A_2$-singularity at $[1:-1:0:0]$. By \cite{MartinStadlmayr}, $\Aut_{\widetilde{X}}^0$ is trivial, hence Proposition \ref{prop: fitting argument A_n} implies $\Aut_X^0 = \mu_3$.
        
        In Case \eqref{eq: twocoeffszero4}, $X$ has an $A_3$-singularity at $[0:0:1:0]$, an $A_2$-singularity at $[1:1:1:0]$, and an $A_1$-singularity at $[1:0:0:0]$. By \cite{MartinStadlmayr}, $\Aut_{\widetilde{X}}^0$ is trivial, hence Proposition \ref{prop: fitting argument A_n} implies $\Aut_X^0 = \mu_3$.
        
        In Case \eqref{eq: twocoeffszero5}, $X$ has an $A_5$-singularity at $[0:1:-1:0]$ and an $A_1$-singularity at $[1:0:0:0]$. By \cite{MartinStadlmayr}, $\Aut_{\widetilde{X}}^0$ is trivial, hence Proposition \ref{prop: fitting argument A_n} implies $\Aut_X^0 = \mu_3$. 
        
        In Case \eqref{eq: twocoeffszero6}, $X$ has an $E_6^1$-singularity at $[1:0:0:0]$. It is elementary to check that $\Aut_X^0 = \Stab_{\PGL_3}(Q) = \mu_3$ in this case. Alternatively, observe that this case does not occur in the classification of invariant quartics for the actions (b), (c), and (d) below, so $\Aut_X^0[F] = \mu_3$, where $\Aut_X^0[F]$ is the Frobenius kernel of $\Aut_X^0$. In particular, $\mu_3$ is normal in $\Aut_X^0$, so the $\Aut_X^0$-action preserves the eigenspaces of the $\mu_3$-action, hence the induced action on $\bbP^2$ is diagonal. This simplifies the calculation of the stabilizer of $Q$ considerably.
        
        \item If only one coefficient is $0$, then we get the following cases, again using scaling and the symmetry $y \leftrightarrow z$: 
        \begin{eqnarray}
        &  x^4 + ax^2yz + xy^3 + xz^3 \label{eq: onecoeffzero1} \\
       & x^4 + a^3x^2yz + xy^3 + y^2z^2 \label{eq: onecoeffzero2} \\
       & x^4 + xy^3 + xz^3 + ay^2z^2 \label{eq: onecoeffzero3} \\
       & ax^2yz + xy^3 + xz^3 + y^2z^2 \label{eq: onecoeffzero4}  
        \end{eqnarray}
        In Case \eqref{eq: onecoeffzero1}, $X$ has an $A_5$-singularity at $[0:1:-1:0]$. By \cite{MartinStadlmayr}, $\Aut_{\widetilde{X}}^0$ is trivial, so $\Aut_X^0 = \mu_3$ by Remark \ref{rem: A5}. 
        
        Consider Case \eqref{eq: onecoeffzero2}. If $a^2 = 1$, then $X$ has an $A_6$-singularity at $[0:0:1:0]$, hence $X$ contains only equivariant RDPs and does not occur in Table \ref{Table Eqn and Aut - char 3, deg 2}. If $a^2 \neq 1$, then $X$ has an $A_3$-singularity at $[0:0:1:0]$ and an $A_2$-singularity at $[a^2 - 1:a^4 + a^2 + 1:a^3:0]$. In this case, $\Aut_{\widetilde{X}}^0$ is trivial by \cite{MartinStadlmayr}, so $\Aut_X^0 = \mu_3$ by Proposition \ref{prop: fitting argument A_n}.
        
        In Case \eqref{eq: onecoeffzero3}, $X$ has two $A_2$-singularities, one at $[1:0:-1:0]$ and one at $[1:-1:0:0]$. We leave it to the reader to check that $\Aut_X^0 = \mu_3$ in this case.
        
        In Case \eqref{eq: onecoeffzero4}, $X$ has an $A_1$-singularity at $[1:0:0:0]$, and additional singularities at $[u:au^2:1]$, where $u$ is a solution of $a^3u^6 - au^3 + 1 = 0$. If $a = 1$, then $u=-1$ is unique and the resulting singularity of $X$ is of type $A_5$. If $a \neq 1$, then $X$ has two singularities of type $A_2$. In both cases, $\Aut_{\widetilde{X}}^0$ is trivial by \cite{MartinStadlmayr}, so $\Aut_X^0 = \mu_3$ if $a = 1$ by Remark \ref{rem: A5}. If $a \neq 1$, one can check $\Aut_X^0 = \mu_3$ directly.
        
        \item If no coefficient is $0$, we get the following case
         \begin{eqnarray}
        x^4 + xy^3 + xz^3 + ax^2yz + by^2z^2 \label{eq: nocoeffzero}
         \end{eqnarray}
        Here, $X$ has singularities at the points $[bu:au^2:b]$, where $u$ is a solution of $a^3u^6 + (b^3 - a^2b^2)u^3 + b^3 = 0$. If $(b^3 - a^2b^2)^2 = a^3b^3$, then $x$ is unique and the resulting singularity of $X$ is of type $A_5$. If $(b^3 - a^2b^2)^2 \neq a^3b^3$, then $X$ has two singularities of type $A_2$. In both cases, $\Aut_{\widetilde{X}}^0$ is trivial by \cite{MartinStadlmayr}, so $\Aut_X^0 = \mu_3$ by Proposition \ref{prop: fitting argument A_n} and Remark \ref{rem: A5}. 
    \end{itemize}
    \item[(b)] There are three types of quartics which are invariant under this $\mu_3$-action, namely
    \begin{eqnarray*}
    & f_2(x,y)z^2 \\
    & f_3(x,y)z + f_0 z^4 \\
    & f_4(x,y)+ f_1(x,y)z^3
    \end{eqnarray*}
    where the $f_i$ are homogeneous polynomials of degree $i$ in $x$ and $y$. All quartics in the first family contain a double line, hence they lead to non-normal $X$. For the same reason, we have $f_3,f_4 \neq 0$ in the latter two families. In the third family, we have $f_1 \neq 0$, for otherwise $Q$ has a quadruple point and the corresponding singularity on $X$ is not an RDP. The $\GL_2$-action on $x,y$ normalizes the $\mu_3$-action, hence acts on the space of invariant quartics. Similarly, substitutions of the form $z \mapsto z + \beta x + \gamma y$ with $\beta, \gamma \in k$ act on the space of invariant quartics. Using these substitutions, and keeping in mind that $Q$ must be reduced, we obtain the following simplified normal forms:
    \begin{eqnarray}
    &xy(x+y)z   \label{eq: CaseB1} \\
    &xy(x+y)z + z^4  \label{eq: CaseB2} \\
    &x^2yz + z^4  \label{eq: CaseB3} \\
    &y^4     + xz^3 \label{eq: CaseB4} \\
     &y^4 + x^2y^2 + xz^3 \label{eq: CaseB5} \\    
    & x^2y^2 + xz^3 \label{eq: CaseB6} \\  
    & x^3y + xz^3 \label{eq: CaseB7} 
    \end{eqnarray}
    
\noindent    In Case \eqref{eq: CaseB1}, $X$ has a $D_4$-singularity at $[0:0:1:0]$ and three $A_1$-singularities, at $[1:0:0:0],[0:1:0:0]$, and $[1:-1:0:0]$. In particular, $X$ contains only equivariant RDPs, so it does not occur in Table \ref{Table Eqn and Aut - char 3, deg 2}.
    
\noindent    In Case \eqref{eq: CaseB2}, $X$ has an $A_2$-singularity at $[1:1:1:0]$ and three $A_1$-singularities, at $[1:0:0:0],[0:1:0:0]$, and $[1:-1:0:0]$. By \cite{MartinStadlmayr}, $\Aut_{\widetilde{X}}^0$ is trivial, hence $\Aut_X^0 = \mu_3$ by Proposition \ref{prop: fitting argument A_n}.
    
\noindent    In Case \eqref{eq: CaseB3}, $X$ has a $D_5$-singularity at $[0:1:0:0]$ and an $A_1$-singularity at $[1:0:0:0]$. In particular, $X$ contains only equivariant RDPs, so it does not occur in Table \ref{Table Eqn and Aut - char 3, deg 2}.
    
\noindent    In Case \eqref{eq: CaseB4}, $X$ has a singularity of type $E_6^0$ at $[1:0:0:0]$. In Table \ref{Table Eqn and Aut - char 3, deg 2}, we give an action of a group scheme $G$ of length $27$ which preserves $X$ and such that no subgroup scheme of $G$ lifts to $\widetilde{X}$. Moreover, we give an action of $\Aut_{\widetilde{X}}^0 = \mathbb{G}_m$ on $X$. As in the corresponding case in degree $3$, we leave it to the reader to check that these two actions generate $\Aut_X^0$. 
    
\noindent    In Case \eqref{eq: CaseB5}, $X$ has three $A_2$-singularities, at $[1:0:0:0],[1:-1:1:0],$ and $[1:1:1:0]$. In Table \ref{Table Eqn and Aut - char 3, deg 2}, we describe an action of $\alpha_3^2 \rtimes \mu_3$ on $X$. By \cite{MartinStadlmayr}, $\Aut_{\widetilde{X}}^0$ is trivial, hence $\Aut_X^0 = \alpha_3^2 \rtimes \mu_3$ by Proposition \ref{prop: fitting argument A_n}. 
    
\noindent    In Case \eqref{eq: CaseB6}, $X$ has an $A_5$-singularity at $[0:1:0:0]$ and an $A_2$-singularity at $[1:0:0:0]$. In Table \ref{Table Eqn and Aut - char 3, deg 2}, we give an action of $\alpha_3^2 \rtimes \mathbb{G}_m$ on $X$. By \cite{MartinStadlmayr}, we have $\Aut_{\widetilde{X}}^0 = \mathbb{G}_m$, so Proposition \ref{prop: fitting argument A_n} and Remark \ref{rem: A5} show that $\Aut_X^0 = \alpha_3^2 \rtimes \mathbb{G}_m$.
    
\noindent    In Case \eqref{eq: CaseB7}, $X$ admits a singularity of type $E_7^0$ at $[0:1:0:0]$. In Table \ref{Table Eqn and Aut - char 3, deg 2}, we give an action of $\alpha_3$ on $X$ that does not fix the $E_7^0$-singularity as well as the action of $\Aut_{\widetilde{X}}^0 = \mathbb{G}_a \rtimes \mathbb{G}_m$. We leave it to the reader to check that these two actions generate $\Aut_X^0$. 
    \item[(c)] We can write the equation of $Q$ as 
    $$
    f_0 x^4 + x^3 f_1(y,z) + x^2 f_2(y,z) + x f_3(y,z) + f_4(y,z),
    $$
    where the $f_i$ are homogeneous of degree $i$ in $y$ and $z$. Applying the $\alpha_3$-action, we obtain
    $$
    f_0 x^4 + \varepsilon f_0 x^3y + x^3 f_1(y,z) + (x^2 - \varepsilon xy + \varepsilon^2y^2)f_2(y,z) + (x + \varepsilon y) f_3(y,z) + f_4(y,z).
    $$
    By considering the non-zero term of highest degree in $x$, we see that this is a multiple of the original equation if and only if it is equal to it. Comparing the coefficients of $\varepsilon^2$ and $\varepsilon$, we see that this happens if and only if $f_0 = f_2 = f_3 = 0$. Hence, $Q$ is of the form
    $$
    x^3f_1(y,z) + f_4(y,z).
    $$
    But then $Q$ is invariant under the $\mu_3$-action $[x:y:z] \mapsto [\lambda x:y:z]$ and hence, after interchanging $x$ and $z$, $Q$ is as in Cases \eqref{eq: CaseB4}, \eqref{eq: CaseB5}, \eqref{eq: CaseB6}, and \eqref{eq: CaseB7}. 
    \item[(d)] We write the equation of $Q$ as $\sum a_{ijk} x^iy^jz^k$. As in Case (c), one checks that $Q$ is preserved by the $\alpha_3$-action if and only if its equation is preserved by the $\alpha_3$-action. Applying the $\alpha_3$-action and comparing the coefficients of $\varepsilon$ and $\varepsilon^2$, respectively, we see that $Q$ is $\alpha_3$-invariant if and only if the following two conditions are satisfied:
    
    \noindent
    \begin{tabular}{lcc}
    $a_{400}x^3y + a_{310} x^3z  - a_{220}x^2yz + a_{211}x^2z^2 - a_{220}xy^3 - a_{211}xy^2z - (a_{202} + a_{121})xyz^2$ &&\\
    \hspace{8mm}$\hspace{1mm}+\hspace{1mm} a_{112}xz^3 + a_{130}y^4 + (a_{121} + a_{040})y^3z + a_{112}y^2z^2 + (a_{103}-a_{022})yz^3 + a_{013}z^4$ &$= $&$0$\\
   $-a_{400}x^3z + a_{220}x^2z^2 - a_{220} xy^2z + (a_{121} + a_{202})xz^3 + a_{220}y^4 + (a_{211}-a_{130})y^3 z$ &\\
   \hspace{8mm}$\hspace{1mm}+\hspace{1mm} (a_{121} + a_{202})y^2z^2 + (a_{022} - a_{103})z^4 $&$=$& $0 $
    \end{tabular}
    In other words, $Q$ is $\alpha_3$-invariant if and only if it is given by an equation of the form
    $$
     a(xz+y^2)^2 + bz^2(xz + y^2) +  cx^3z + dy^3z + ez^4.
    $$
    The substitutions of the form $x \mapsto \beta^2 x + \beta \gamma y +\delta z, y \mapsto \beta y + \gamma z$ with $\beta \in k^\times$ and $\gamma, \delta \in k$ normalize the $\alpha_3$-action, hence they preserve the space of $\alpha_3$-invariant quartics. If $a \neq 0$, we can scale it to $a = 1$ and use $\delta$ to kill $b$. If $a = 0$, then $b \neq 0$, otherwise $Q$ contains a triple line. Then, we can assume $b = 1$. Using the other substitutions, we arrive at one of the following five simplified normal forms:
    \begin{eqnarray}
    (xz+y^2)^2 + z^4 \label{eq: CaseD1} \\
    (xz+y^2)^2 + y^3z \label{eq: CaseD2} \\
    (xz+y^2)^2 + x^3z + a^6 z^4 \label{eq: CaseD3} \\
    z^2(xz + y^2) + y^3z \label{eq: CaseD4}  \\
    z^2(xz + y^2) + x^3z  \label{eq: CaseD5} 
    \end{eqnarray}
    \noindent
    In Case \eqref{eq: CaseD1}, $X$ has an $A_7$-singularity at $[1:0:0:0]$, hence $X$ contains only equivariant RDPs and does not occur in Table \ref{Table Eqn and Aut - char 3, deg 2}.
    
 \noindent    In Case \eqref{eq: CaseD2}, $X$ has an $A_4$-singularity at $[1:0:0:0]$ and an $A_2$-singularity at $[0:0:1:0]$. By \cite{MartinStadlmayr}, $\Aut_{\widetilde{X}}^0$ is trivial, hence $\Aut_X^0 = \alpha_3$ by Proposition \ref{prop: fitting argument A_n}.
    
  \noindent   In Case \eqref{eq: CaseD3}, $X$ is singular precisely at $[-a^2:\pm a:1]$. If $a = 0$, then this singularity is of type $A_5$ and, by \cite{MartinStadlmayr} and Remark \ref{rem: A5}, we have $\Aut_X^0 = \alpha_3$. If $a \neq 0$, then both singularities are of type $A_2$. Direct calculation shows that $\Aut_X^0 = \alpha_3$.
    
  \noindent   In Case \eqref{eq: CaseD4}, $X$ has an RDP of type $E_7^1$ at $[1:0:0:0]$. In particular, $X$ contains only equivariant RDPs and does not occur in Table \ref{Table Eqn and Aut - char 3, deg 2}.
    
  \noindent   Finally, in Case \eqref{eq: CaseD5}, has an RDP of type $A_5$ at $[0:1:0:0]$. By \cite{MartinStadlmayr}, $\Aut_{\widetilde{X}}^0$ is trivial, hence, by Remark \ref{rem: A5}, we have $\Aut_X^0 = \alpha_3$. Note that this surface is not isomorphic to the one in Case \eqref{eq: CaseD3} (with $a = 0$), since here, $Q$ contains a line, while in the other case, $Q$ is irreducible.  
\end{enumerate}

\smallskip 

\underline{If $d = 1$}, then $X$ is a double cover of the quadratic cone in $\mathbb{P}^3$ branched over a sextic curve $S$. In particular, $X$ is given by an equation in Weierstrass form
$$
y^2 = x^3 + a_2(s,t)x^2 + a_4(s,t)x + a_6(s,t),
$$
where $a_i$ is a homogeneous polynomial of degree $i$ in $s$ and $t$, and $S$ is given by the equation
\begin{equation} \label{eq: sextic}
x^3 + a_2(s,t)x^2 + a_4(s,t)x + a_6(s,t) = 0
\end{equation}
in $\mathbb{P}(1,1,2)$. By Proposition \ref{prop: ShortExactAutSequences}, we have $\Aut_X^0 = \Stab_{\Aut_{\mathbb{P}(1,1,2)}}(S)^0$. Since $\mathbb{P}(1,1,2)$ is an $\Aut_X$-equivariant RDP del Pezzo surface of degree $8$, Theorem \ref{thm: equivariantmain} and \cite{MartinStadlmayr} show that $\Aut_{\mathbb{P}(1,1,2)} = (\mathbb{G}_a^3 \rtimes \GL_2)/(\mathbb{Z}/2\mathbb{Z})$, which acts via substitutions of the form
\begin{eqnarray*}
x & \mapsto & x + f_2(s,t) \\
s & \mapsto & \alpha s + \beta t \\
t & \mapsto & \gamma s + \delta t
\end{eqnarray*}
where $f_2$ is a homogeneous polynomial of degree $2$ in $s$ and $t$, and $\alpha, \beta, \gamma, \delta$ are scalars such that $\alpha \delta - \beta \gamma$ is invertible.

Now, assume $\Aut_X^0$ is non-trivial. Then, it contains $G \in \{\alpha_3,\mu_3\}$. If $G = \mu_3$, we claim that there are only three embeddings of $G$ into $\Aut_{\mathbb{P}(1,1,2)}^0 = \mathbb{G}_a^3 \rtimes \GL_2$. First, by counting the possible Jordan normal forms, observe that there are only three conjugacy classes of embeddings of $G$ into $\GL_2$. Then, applying \cite[Th\'eor\`eme 5.1.1 (ii) (b)]{SGA3II-ExposeXVII}, we see that every such embedding lifts uniquely, up to conjugation by $\mathbb{G}_a^3$, to an embedding of $\mu_3$ into $\mathbb{G}_a^3 \rtimes \GL_2$. Hence, if $G = \mu_3$, we may assume that it acts in one of the following three ways on $\mathbb{P}(1,1,2)$:
\begin{enumerate}
    \item[(a)] $\mu_3: [s:t:x] \mapsto [s: \lambda t :x ]$,
    \item[(b)] $\mu_3: [s:t:x] \mapsto [\lambda s : \lambda t : x]$,
    \item[(c)] $\mu_3: [s:t:x] \mapsto [\lambda s : \lambda^{-1} t :x]$.
\end{enumerate}
Next, assume that $G = \alpha_3$ and the image of $G$ in $\GL_2$ is trivial. Then, the embedding of $G$ into $\mathbb{G}_a^3 \rtimes \GL_2$ is given by a homomorphism $\alpha_3 \to \mathbb{G}_a^3$, which in turn corresponds to a choice of homogeneous polynomial $f_2$ of degree $2$ in $s$ and $t$. According to whether this polynomial has one double zero or two simple zeroes, we can conjugate the embedding of $\alpha_3$ using the $\GL_2$-action to get one of the following two $\alpha_3$-actions:
\begin{enumerate}
    \item[(d)] $\alpha_3: [s:t:x] \mapsto [s:t:x + \varepsilon s^2]$,
    \item[(e)] $\alpha_3: [s:t:x] \mapsto [s:t:x + \varepsilon st]$.
\end{enumerate}
Finally, assume that $G = \alpha_3$ and the image of $G$ in $\GL_2$ is non-trivial. After conjugating by elements of $\GL_2$, we can assume that the image of $G$ in $\GL_2$ acts as $(s,t) \mapsto (s + \varepsilon t,t)$. An action of $\alpha_3$ on $\bbP(1,1,2)$ that lifts this embedding of $\alpha_3$ into $\GL_2$ acts on $x$ via
$$
x \mapsto x + p(\varepsilon) s^2 + q(\varepsilon)  st + r(\varepsilon)  t^2
$$
where $p,q,r$ are polynomials of degree $2$ in $\varepsilon$ satisfying the following conditions:
\begin{eqnarray*}
p(0) = q(0)=r(0) = 0 \\
p(\varepsilon + \varepsilon') = p(\varepsilon) + p(\varepsilon') \\
q(\varepsilon+\varepsilon') = q(\varepsilon) + q(\varepsilon') - p(\varepsilon)\varepsilon' \\
r(\varepsilon+\varepsilon') = r(\varepsilon) + r(\varepsilon') + q(\varepsilon)\varepsilon' + p(\varepsilon)\varepsilon'^2
\end{eqnarray*}
Solving this system of equations, we obtain $p = 0$, $q(\varepsilon) = \alpha  \varepsilon$ and $r(\varepsilon) = \beta \varepsilon - \alpha \varepsilon^2$ for scalars $\alpha , \beta \in k$. Conjugating with the substitution $x \mapsto x - \alpha s^2 + \beta st$, we obtain that our $\alpha_3$-action is conjugate to the following:
\begin{enumerate}
    \item[(f)] $\alpha_3: [s:t:x] \mapsto [s + \varepsilon t:t:x]$.
\end{enumerate}
We will now classify the sextics as in Equation \eqref{eq: sextic} which are reduced with only simple singularities and invariant under one of the above actions. In particular, note that if all the $a_i$ are scalar multiples of the $i$-th power of the same linear polynomial in $s$ and $t$, then $S$ has a non-simple singularity, so it will not appear in our list. Calculating $\Aut_X^0$ is straightforward here, using our description of $\Aut_{\bbP(1,1,2)}$ above, so it will be left to the reader without further mention. The results can be found in Table \ref{Table Eqn and Aut - char 3, deg 1}:

\begin{enumerate}
    \item[(a)] The sextic $S$ is invariant if and only if the $t$-degree of every monomial that occurs in the equation of $S$ is divisible by $3$. Note that the substitutions 
    \begin{eqnarray*}
    x & \mapsto & x + \alpha s^2 \\
    s & \mapsto & \beta s \\
    t & \mapsto & \gamma t + \delta s
    \end{eqnarray*}
    act on the space of sextics satisfying the condition on the $t$-degree, hence we can apply them to arrive at the following normal forms for $S$:
    \begin{eqnarray}
    x^3 + s^4x + t^6 \label{eq: deg1casea1} \\
    x^3 + s^4x + s^3t^3 \label{eq: deg1casea2}\\
    x^3 + st^3x \label{eq: deg1casea2'}\\
    x^3 + st^3x + as^3t^3 + t^6  \label{eq: deg1casea3} \\
    x^3 + st^3x + s^3t^3 \label{eq: deg1casea4} \\
    x^3 + s^2x^2 + s^3t^3 \label{eq: deg1casea5} \\
    x^3 + s^2x^2 + a^3s^3t^3 + t^6 \label{eq: deg1casea6} \\
    x^3 + s^2x^2 + st^3x + a^3s^3t^3 + b^3t^6 \label{eq: deg1casea7}
    \end{eqnarray}
    \noindent 
    In Case \eqref{eq: deg1casea1}, $X$ has an $E_6^0$-singularity at $[0:1:-1:0]$. 
    
    \noindent
    In Case \eqref{eq: deg1casea2}, $X$ has an $E_8^1$-singularity at $[0:1:0:0]$.  
    
    \noindent
    In Case \eqref{eq: deg1casea2'}, $X$ has an $E_7^0$-singularity at $[1:0:0:0]$ and an $A_1$-singularity at $[0:1:0:0]$.
    
    \noindent
    In Case \eqref{eq: deg1casea3}, if $a \neq 0$, then $X$ has an $E_6^0$-singularity at $[1:0:0:0]$. If $a = 0$, then $X$ has an $E_7^0$-singularity at $[1:0:0:0]$.
    
    \noindent
    In Case \eqref{eq: deg1casea4}, $X$ has an $E_6^0$-singularity at $[1:0:0:0]$ and an $A_1$-singularity at $[0:1:0:0]$.
    
    \noindent
    In Case \eqref{eq: deg1casea5}, $X$ has an $E_6^1$-singularity at $[0:1:0:0]$ and an $A_2$-singularity at $[1:0:0:0]$.
    
    \noindent
    In Case \eqref{eq: deg1casea6}, $X$ is singular at $[1:0:0:0], [1:-a:0:0]$, and $[0:1:-1:0]$. If $a \neq 0$, then all singular points are $A_2$-singularities. If $a = 0$, then the first two combine to an $A_5$-singularity, while the latter stays an $A_2$-singularity.
    
    \noindent
    Consider Case \eqref{eq: deg1casea7}. Here, $X$ is singular at $[1:0:0:0]$ and $[u:1:u^{-1}:0]$, where $u$ is a solution of $au^2 + (b-1)u + 1 = 0$. If $b = 0$, then $X$ has an additional $A_1$-singularity at $[0:1:0:0]$.
    If the first three singular points are distinct, which happens if and only if $a \neq 0,(b-1)^2$, then
    they are $A_2$-singularities. Now, consider the case where $a = 0$ or $a = (b-1)^2$, but not both: Then $[1:0:0:0]$ is an $A_5$-singularity and $[u:1:u^{-1}:0]$ an $A_2$-singularity if $0=a \neq (b-1)^2$, and the other way round if $0 \neq a=(b-1)^2$.
    Note that the substitution $t \mapsto t + (b-1)s, x \mapsto x + (b-1)^3s^2$ maps the family with $a = (b-1)^2$ to the one with $a = 0$, which is why only the latter occurs in Table \ref{Table Eqn and Aut - char 3, deg 1}. Finally, if $a = 0$ and $b = 1$, then $X$ has an $A_8$-singularity at $[1:0:0:0]$. 
    
    \item[(b)] The sextic $S$ is invariant if and only if the degrees of the $a_i$ are divisible by $3$. This happens if and only if $a_2 = a_4 = 0$. Using a substitution from $\Aut_{\bbP(1,1,2)}$, we obtain the following normal forms:
    \begin{eqnarray}
    x^3 + s^5t \label{eq: deg1caseb1} \\
    x^3 + s^4t^2 \label{eq: deg1caseb2} \\
    x^3 + s^4t^2 + s^2t^4 \label{eq: deg1caseb3}
    \end{eqnarray}
    In Case \eqref{eq: deg1caseb1}, $X$ has an $E_8^0$-singularity at $[0:1:0:0]$. 
    
    \noindent 
    In Case \eqref{eq: deg1caseb2}, $X$ has an $E_6^0$-singularity at $[0:1:0:0]$ and an $A_2$-singularity at $[1:0:0:0]$. 
    
    \noindent 
    In Case \eqref{eq: deg1caseb3}, $X$ has four $A_2$-singularities, at $[1:0:0:0],[0:1:0:0],[1:-1:0:0],$ and $[1:1:0:0]$. 
 
    \item[(c)] The sextic $S$ is invariant if and only if for every monomial in the equation of $S$, the difference between the $s$- and $t$-degree is divisible by $3$. We may assume that not both $a_2$ and $a_4$ are zero, otherwise we get Cases \eqref{eq: deg1caseb1}, \eqref{eq: deg1caseb2}, and \eqref{eq: deg1caseb3}.
    Note that the substitutions 
    \begin{eqnarray*}
    x & \mapsto & x + \alpha st \\
    s & \mapsto & \beta s \\
    t & \mapsto & \gamma t
    \end{eqnarray*} 
    normalize our $\mu_3$-action, hence we can apply them to arrive at the following normal forms for $S$:
    \begin{eqnarray}
    x^3 + s^2t^2x \label{eq: deg1casec-2} \\
    x^3 + s^2t^2x + t^6 + a^3s^6 \label{eq: deg1casec-1} \\
    x^3 + stx^2 + s^6 \label{eq: deg1casec1} \\
    x^3 + stx^2 + a^3s^6 + s^3t^3 \label{eq: deg1casec2} \\
    x^3 + stx^2 + a^3s^6 + b^3s^3t^3 + t^6 \label{eq: deg1casec3}
    \end{eqnarray}
    In Case \eqref{eq: deg1casec-2}, $X$ has two $D_4$-singularities, one at $[1:0:0:0]$ and one at $[0:1:0:0]$, hence $X$ contains only equivariant RDPs and does not occur in Table \ref{Table Eqn and Aut - char 3, deg 1}.
    
    \noindent
    Consider Case \eqref{eq: deg1casec-1}. If $a = 0$, then $X$ has a $D_4$-singularity at $[1:0:0:0]$ and an $A_2$-singularity at $[0:1:-1:0]$. If $a \neq 0$, then $X$ has two $A_2$-singularities, one at $[1:0:-a:0]$ and one at $[0:1:-1:0]$.
    
    \noindent
    In Case \eqref{eq: deg1casec1}, $X$ has an RDP of type $D_7$ at $[0:1:0:0]$. In particular, $X$ contains only equivariant RDPs and does not occur in Table \ref{Table Eqn and Aut - char 3, deg 1}.
    
    \noindent
    Consider Case \eqref{eq: deg1casec2}. If $a = 0$, then $X$ has RDPs of type $D_4$ at $[1:0:0:0]$ and $[0:1:0:0]$, hence $X$ contains only equivariant RDPs and does not occur in Table \ref{Table Eqn and Aut - char 3, deg 1}. If $a \neq 0$, then $X$ has an RDP of type $D_4$ at $[0:1:0:0]$ and an $A_2$-singularity at $[1:-a:0:0]$.
    
     \noindent
    Consider Case \eqref{eq: deg1casec3}. If $a = 0$, we can interchange $s$ and $t$ to reduce to one of the previous two cases. Here, $X$ has singularities at $[1:u:0:0]$, where $u$ is a solution of $u^2 + bu + a = 0$. If $b^2 = a$, then the unique singularity of $X$ is an $A_5$-singularity. If $b^2 \neq a$, then $X$ has two $A_2$-singularities.
    
    \item[(d)] If $a_2 \neq 0$ or $a_4 \neq 0$, then $S$ cannot be $\alpha_3$-invariant. Hence, $a_2 = a_4 = 0$. But then $S$ is given by one of the equations in (b), so we are done.
    
    \item[(e)] By the same argument as in Case (d), we can reduce this to Case (b).
    \item[(f)] The sextic $S$ is $\alpha_3$-invariant if and only if each $a_i$ is invariant under the $\alpha_3$-action. This happens if and only if the $s$-degree of each monomial in the equation of $S$ is divisible by $3$. Interchanging the roles of $s$ and $t$, we can therefore reduce this Case to Case (a). In fact, this explains why each of the surfaces in Case (a) admits an $\alpha_3$-action of this form.
\end{enumerate}

\end{proof}

\newpage

\section*{Appendix: Classification tables}

\begin{table}[h!]
    \centering
    \begin{tabular}{|c||c|c|c|c|} \hline 
   $d$ &  singularities & equation of $X$    & $\Aut_X^{0}$ \\ \hline \hline 
    $2$ & $A_6$ & $w^2 = x^3y + y^3z + z^3x$ 
    & $\mu_7: [\lambda x:\lambda^4 y: \lambda^2 z: w]$ \\ \hline \hline
    $1$ & $A_6 + A_1$ & $y^2 = x^3 + ts^3x + t^5s$  & $\mu_7: [\lambda s:\lambda^4t:x: y]$ \\ \hline
    \end{tabular}
    \caption{Non-equivariant RDP del Pezzo surfaces with vector fields in characteristic $7$}
    \label{Table Eqn and Aut - char 7}
\end{table}

\begin{table}[h!]
    \centering
    \begin{tabular}{|c||c|c|c|c|} \hline 
   $d$ &  RDPs & equation(s) of $X$    &  $\Aut_X^{0}$ \\ \hline \hline 
    $5$ & $A_4$ 
    & \begin{tabular}{ccc}
    $x_0x_2 - x_1^2$ &=& $0$ \\
    $x_0x_3 - x_1x_4$ & $=$& $0$\\
    $x_2x_4 - x_1 x_3$ &$=$& $0$\\
    $x_1x_2 + x_4^2 + x_0x_5$ &$=$& $0$\\
    $x_2^2 + x_3x_4 + x_1x_5$ & $=$ & $0$
    \end{tabular}
    & \begin{tabular}{c}
    $\langle \alpha_5 , \Aut_{\widetilde{X}}^0 \rangle$ with\\
    $ \alpha_5 : $
    \resizebox{4cm}{!}{$\left( \begin{array}{c c c c c c}
    1 & 0 & 0 & 0 & 0 & 0 \\
    0 & 1 & -2 \varepsilon^2 & 2\varepsilon^3 & \varepsilon & 2\varepsilon^4 \\
    0  & 0 & 1 & 2\varepsilon & 0 & -\varepsilon^2 \\
    0 & 0 & 0 & 1 & 0 & -\varepsilon \\
    0 & 0 & \varepsilon & \varepsilon^2 & 1 & -2\varepsilon^3 \\
    0 & 0 & 0 & 0 & 0 & 1 \\
    \end{array} \right)$}
    \end{tabular} 
    \\ \hline \hline 
    $4$ & $A_4$
    & \begin{tabular}{ccc}
    $x_0x_1 - x_2x_3$ &$=$& $0$\\
    $x_0x_4 + x_1x_2 + x_3^2$ &$=$& $0$ 
    \end{tabular}
   
     & \begin{tabular}{c}
    $\langle \alpha_5 , \Aut_{\widetilde{X}}^0 \rangle$ with \\
    $ \alpha_5 : $
    \resizebox{4cm}{!}{$\left( \begin{array}{c c c c c}
    1 & - \varepsilon^3 & -2 \varepsilon & 2  \varepsilon^2 & 2  \varepsilon^4 \\
    0 & 1 &0 &0 & 2  \varepsilon \\
    0 & -  \varepsilon^2 & 1 & -2 \varepsilon &  \varepsilon^3 \\
     0 &  \varepsilon & 0 & 1 &  \varepsilon^2 \\
    0 & 0& 0& 0& 1
    \end{array} \right)$}
    \end{tabular}
    \\ \hline \hline 
    
    \multirow{7}{*}{$3$} & $A_4$ & $x_0^2x_1 + x_1^2x_2 + x_2^2x_3 + x_3^2x_0 = 0 $        
    & $\mu_5: [x_0: \lambda x_1: \lambda^4 x_2: \lambda^3 x_3]$ \\ \cline{2-4}

     & $A_4 + A_1$ & $x_0x_1x_3 + x_0x_2^2 + x_1^2x_2 = 0 $  
    & \begin{tabular}{c}
    $ \alpha_5 \rtimes \bbG_m $ with \\
    $ \alpha_5 : 
    \left( \begin{array}{c c c c}
    1 & \varepsilon  & \varepsilon^2 & -2 \varepsilon ^3\\
    0 & 1  & 2 \varepsilon & -\varepsilon^2 \\ 
    0 & 0 & 1 & -\varepsilon \\
    0 & 0 & 0  & 1 
    \end{array} \right) 
    $ \\
    $ \bbG_m: [x_0:\lambda x_1: \lambda^2 x_2:\lambda^3 x_3]$
    \end{tabular} \\ \hline \hline 
    
    \multirow{2}{*}{$2$} & $A_4 + A_1$ & $w^2 = x^4 + xy^2z + yz^3$ & $\mu_5: [x:\lambda y: \lambda^3 z : w]$ \\ \cline{2-4}
     & $A_4 + A_2$ & $w^2 = xy^3 + yz^3 + x^2z^2$  & $\mu_5: [\lambda^2 x:\lambda y : \lambda^3 z:w]$ \\ \hline \hline

    \multirow{3}{*}{$1$} & $A_4 + A_2+ A_1$ 
    & $y^2= x^3+s^3tx+s^2t^4$
    & $\mu_5:  [s:\lambda t: \lambda^3 x : \lambda^2 y]$ \\ \cline{2-4}
     & $2A_4$ & $y^2 = x^3 + t^4x + s^5t$  & 
    $\alpha_5 \rtimes \mu_5: [\lambda s + \varepsilon t: t:x:y]$ 
    \\ \cline{2-4}
     & $E_8^0$ & $y^2 = x^3 + s^5t$ & 
    $\alpha_5 \rtimes \mathbb{G}_m: [\lambda s + \varepsilon t: \lambda^{-5}t:x:y]$ 
    \\ \hline
    \end{tabular}
    \caption{Non-equivariant RDP del Pezzo surfaces with vector fields in characteristic $5$}
    \label{Table Eqn and Aut - char 5}
\end{table}

\newpage

\begin{table}[h!]
    \centering
    \begin{tabular}{|c|c|c|c|c|} \hline 
   $d$ &  RDPs & equation(s) of $X$    & $\Aut_X^{0}$ \\ \hline \hline 
    \multirow{9}{*}{$6$} & $A_2$ 
    & \resizebox{!}{2.1cm}{
    \begin{tabular}{ccc}
   $x_0x_5 - x_3x_4$ &$=$& $0$ \\
    $x_0x_6 - x_1x_4$ & $=$& $0$\\
    $x_0x_6 - x_2x_3$ &$=$& $0$\\
    $x_3x_6 - x_1x_5$ &$=$& $0$\\
    $x_4x_6 - x_2x_5$ & $=$ & $0$\\
    $x_1x_6 + x_3^2 + x_3x_4$ & $=$ & $0$\\
    $x_2x_6 + x_3x_4 + x_4^2$ & $=$ & $0$\\
    $x_6^2 + x_3x_5 + x_4x_5$ & $=$ & $0$\\
    $x_1x_2 + x_0x_3 + x_0x_4$ & $=$ & $0$
    \end{tabular} 
    }
    
    & 
    \begin{tabular}{c}
    $\langle \alpha_3 , \Aut_{\widetilde{X}}^0 \rangle$ with\\
    $ \alpha_3 : $
    \resizebox{!}{1.65cm}{$\left( \begin{array}{c c c c c c c}
    1 & 0 & 0& 0& 0& 0& 0\\
    -\varepsilon  & 1 & 0& 0& 0& 0& 0\\
    \varepsilon & 0& 1 &0 &0 & 0& 0\\
    -\varepsilon^2 & - \varepsilon & 0& 1 &0 &0 &0 \\
    - \varepsilon^2 & 0 & \varepsilon & 0&1 &0 &0 \\
    0 & 0& 0& 0& 0& 1 &0 \\
    0 & -\varepsilon^2 & -\varepsilon^2 & - \varepsilon & \varepsilon &0 & 1
    \end{array} \right)$}
     \end{tabular} 
    \\ \cline{2-4}
    
     & $A_2 + A_1$ 
    & \resizebox{!}{2.1cm}{
    \begin{tabular}{ccc}
    $x_0^2 - x_1x_5$ &$=$& $0$ \\ 
    $x_0x_2 - x_1x_4$ &$=$& $0$ \\ 
    $x_0x_3 - x_2x_4$ & $=$& $0$\\ 
    $x_0x_4 - x_2x_5$ &$=$& $0$\\
    $x_0x_5 - x_2x_6$ &$=$& $0$\\
    $x_1x_3 - x_2^2$ & $=$ & $0$\\ 
    $x_3x_5 - x_4^2$ & $=$ & $0$\\ 
    $x_3x_6 - x_4x_5$ & $=$ & $0$\\ 
    $x_4x_6 - x_5^2$ & $=$ & $0$ 
    \end{tabular}
    }
   
    & 
    \begin{tabular}{c}
    $\langle \alpha_3 , \Aut_{\widetilde{X}}^0 \rangle$ with\\
    $ \alpha_3 : $
    \resizebox{!}{1.65cm}{$\left( \begin{array}{c c c c c c c}
    1 & -\varepsilon & 0& 0& 0& 0& 0\\
    0 & 1 & 0& 0& 0& 0& 0\\
    0 & 0& 1 &0 &0 & 0& 0\\
    0 & 0 & 0& 1 &0 &0 &0 \\
    0& 0 & -\varepsilon & 0&1 &0 &0 \\
    \varepsilon & \varepsilon^2& 0& 0& 0& 1 &0 \\
    0 & 0 & 0& 0& 0&0 & 1
    \end{array} \right)$}
     \end{tabular} 
    \\ \hline \hline

      \multirow{6.5}{*}{$5$} & $A_2$ 
    & \resizebox{!}{1.40cm}{
    \begin{tabular}{ccc}
    $x_0x_2 - x_1x_5$ &=& $0$ \\
    $x_0x_2 - x_3x_4$ & $=$& $0$\\
    $x_0x_3 + x_1^2 + x_1x_4$ &$=$& $0$\\
    $x_0x_5 + x_1x_4 + x_4^2$ &$=$& $0$\\
    $x_3x_5 + x_1x_2 + x_2x_4$ & $=$ & $0$
    \end{tabular}
    }
    
    & 
    \begin{tabular}{c}
    $\langle \alpha_3 , \Aut_{\widetilde{X}}^0 \rangle$ with\\
    $ \alpha_3 : $
    \resizebox{!}{1.25cm}{$\left( \begin{array}{c c c c c c}
    1 & \varepsilon & 0 & - \varepsilon^2 & -\varepsilon & -\varepsilon^2 \\
    0 & 1 & - \varepsilon^2 & \varepsilon & 0 & 0 \\
    0  & 0 & 1 & 0 & 0 & 0 \\
    0 & 0 & \varepsilon & 1 & 0 & 0 \\
    0 & 0 & -\varepsilon^2 & 0 & 1 & -\varepsilon \\
    0 & 0 & -\varepsilon & 0 & 0 & 1 \\
    \end{array} \right)$}
    \end{tabular} 
    \\ \cline{2-4}
    
     & $A_2 + A_1$ 
    & \resizebox{!}{1.40cm}{
    \begin{tabular}{ccc}
    $x_0^2 - x_1x_4$ &=& $0$ \\
    $x_0x_2 - x_1x_3$ & $=$& $0$\\
    $x_0x_3 - x_2x_4$ &$=$& $0$\\
    $x_0x_4 - x_2x_5$ &$=$& $0$\\
    $x_3x_5 - x_4^2$ & $=$ & $0$
    \end{tabular}
    }
   
    & 
    \begin{tabular}{c}
    $\langle \alpha_3 , \Aut_{\widetilde{X}}^0 \rangle$ with\\
    $ \alpha_3 : $
    \resizebox{!}{1.25cm}{$\left( \begin{array}{c c c c c c}
    1 & -\varepsilon & 0 & 0 & 0 & 0 \\
    0 & 1 & 0 & 0 & 0 & 0 \\
    0  & 0 & 1 & 0 & 0 & 0 \\
    0 & 0 & -\varepsilon & 1 & 0 & 0 \\
    \varepsilon & \varepsilon^2 & 0 & 0 & 1 & 0 \\
    0 & 0 & 0 & 0 & 0 & 1 \\
    \end{array} \right)$}
    \end{tabular} 
    \\ \hline \hline
        
  \multirow{13}{*}{$4$} & $A_2$ 
    & \begin{tabular}{ccc}
      $x_0x_1 + x_2x_4 + x_3x_4$ & $=$ & $0$ \\
      $x_0x_4 + x_1x_4 + x_2x_3$ & $=$ & $0$
    \end{tabular}
    & 
   $\mu_3: [x_0: x_1 : \lambda x_2 : \lambda x_3 : \lambda^2 x_4]$
    \\ \cline{2-4}
    
   & $A_2 + A_1$ 
    & \begin{tabular}{ccc}
        $x_0x_1  - x_2x_3$ & $=$& $0$ \\
    $x_1x_2 + x_2x_4 + x_3x_4$ &=& $0$ \\
    \end{tabular}

    & 
    \begin{tabular}{c}
    $\alpha_3 \rtimes \mathbb{G}_m$ with\\
    $ \alpha_3 : $
    \resizebox{!}{1.2cm}{$\left( \begin{array}{c c c c c}
    1 & 0 & 0 & 0 & 0 \\
    -\varepsilon^2 & 1 & \varepsilon & -\varepsilon & 0 \\
    -\varepsilon & 0 & 1 & 0 & 0\\
    \varepsilon & 0 & 0 & 1 & 0\\
    -\varepsilon^2 & 0 & -\varepsilon & 0 & 1
    \end{array} \right)$} \\
    $\bbG_m:  [\lambda^2 x_0:x_1:\lambda x_2:\lambda x_3:x_4] $
    \end{tabular} 
    \\ \cline{2-4}
    
   & $A_2 + 2A_1$ 
    & \begin{tabular}{ccc}
        $x_0^2 - x_3x_4$ & $=$& $0$ \\
    $x_0 x_3  - x_1x_2$ &=& $0$ \\
    \end{tabular}
    
    & 
    \begin{tabular}{c}
    $\alpha_3 \rtimes \mathbb{G}_m^2$ with\\
    $ \alpha_3 : $
    \resizebox{!}{1.2cm}{$\left( \begin{array}{c c c c c}
    1 & 0 & 0 & 0 & -\varepsilon\\
    0 & 1 & 0 & 0 & 0 \\
    0 & 0 & 1 & 0 & 0\\
    \varepsilon & 0 & 0 & 1 & \varepsilon^2 \\
    0 & 0 & 0 & 0 & 1
    \end{array} \right)$} \\
    $\bbG_m^2: [x_0:\lambda_1 x_1:\lambda_2 x_2:\lambda_1 \lambda_2 x_3:(\lambda_1 \lambda_2)^{-1} x_4]$
    \end{tabular} 
    \\ \hline

    \end{tabular}
    \caption{Non-equivariant RDP del Pezzo surfaces of degree at least $4$ with vector fields in \mbox{characteristic $3$}}
    \label{Table Eqn and Aut - char 3, deg at least 4}
\end{table}

\newpage
\newgeometry{top=27 mm, bottom=20 mm, left=16 mm, right=16mm}

\begin{table}[h!]
    \centering
    \begin{tabular}{|c||c|c|c|} \hline 
   $d$ &  RDPs & equation(s) of $X$    &  $\Aut_X^{0}$ \\ \hline \hline 
    \multirow{30}{*}{$3$} & $A_2$ & $x_0^2x_1 + x_0x_1^2 + x_2^2x_3 + x_2x_3^2 = 0$ & $\mu_3: [x_0: x_1:\lambda x_2: \lambda x_3]$
    \\ \cline{2-4} 
    
      & $A_2 + 2A_1$ & $x_0^2x_1 + x_0^2x_2 + x_0 x_3^2 + x_1x_2x_3 = 0$ & $\mu_3: [x_0: \lambda x_1: \lambda x_2: \lambda^2 x_3]$
    \\ \cline{2-4} 
    
     & $2A_2$ & \begin{tabular}{c}
    $x_0^3 + x_1x_2x_3 + x_0x_1^2 + ax_0^2x_1 = 0$ \\
    with $a^2 \neq 1$
    \end{tabular}
    &
      \begin{tabular}{c}
    $\langle  \alpha_3,\alpha_3, \bbG_m \rangle$ with \\
    $ \alpha_3: [x_0 + \varepsilon x_2: x_1 : x_2: a\varepsilon x_0 - \varepsilon x_1 -a\varepsilon^2 x_2  + x_3]$ \\
    $ \alpha_3: [x_0 + \varepsilon x_3: x_1 : a\varepsilon x_0 - \varepsilon x_1 + x_2 -a\varepsilon^2 x_3: x_3] $ \\
    $\bbG_m: [x_0:x_1: \lambda x_2:\lambda^{-1}x_3]$
    \end{tabular}
    \\ \cline{2-4} 
     
       & $2A_2 + A_1$ & $x_0^3 + x_1x_2x_3  + x_0^2x_1 = 0$ &
         \begin{tabular}{c}
    $\langle  \alpha_3,\alpha_3, \bbG_m \rangle$ with \\
    $ \alpha_3: [x_0 + \varepsilon x_2: x_1 : x_2: \varepsilon x_0 - \varepsilon^2 x_2 + x_3]$ \\
    $ \alpha_3: [x_0 + \varepsilon x_3: x_1 : a\varepsilon x_0 + x_2 -\varepsilon^2 x_3: x_3] $ \\
    $\bbG_m: [x_0:x_1: \lambda x_2:\lambda^{-1}x_3]$
    \end{tabular}
    \\ \cline{2-4} 
    
        & $3A_2$ & $x_0^3 + x_1x_2x_3 = 0$ &  \begin{tabular}{c}
    $\alpha_3^3 \rtimes \bbG_m^2$ with \\
    $ \alpha_3^3: [x_0 + \varepsilon_1 x_1 + \varepsilon_2 x_2 + \varepsilon_3 x_3:x_1:x_2:x_3]$
    \\
    $\bbG_m^2: [x_0: \lambda_1 x_1 : \lambda_2 x_2 : (\lambda_1 \lambda_2)^{-1} x_3]$
    \end{tabular}
     \\ \cline{2-4} 
     
          & $A_5$ & $x_0^3 + x_0x_2x_3 + x_1^2x_2 + x_2^3 = 0$ 
         &  \begin{tabular}{c}
    $\langle \alpha_3, \bbG_a \rtimes \mu_3 \rangle$ with \\
    $ \alpha_3: [x_0 +  \varepsilon x_1 -  \varepsilon^2 x_3: x_1 +  \varepsilon x_3:x_2:x_3] $
    \\
     $ \bbG_a: [x_0:  \varepsilon x_0 + x_1:x_2:- \varepsilon^2 x_0 +  \varepsilon x_1 + x_3] $
    \\
    $\mu_3: [x_0:\lambda x_1: \lambda x_2: \lambda^2 x_3]$
    \end{tabular}
    \\ \cline{2-4} 
    
          & $A_5 + A_1$ & $x_0^3 + x_0x_2x_3 + x_1^2x_2 = 0 $
         &  \begin{tabular}{c}
    $\langle \alpha_3, \bbG_a \rtimes \bbG_m \rangle$ with \\
      $ \alpha_3: [x_0 +  \varepsilon x_1 -  \varepsilon^2 x_3: x_1 +  \varepsilon x_3:x_2:x_3] $
    \\
     $ \bbG_a: [x_0:  \varepsilon x_0 + x_1:x_2:- \varepsilon^2 x_0 +  \varepsilon x_1 + x_3] $
    \\
    $\bbG_m: [x_0:\lambda x_1: \lambda x_2: \lambda^2 x_3]$
    \end{tabular}
   \\ \cline{2-4} 
   
      &  $E_6^0$ & $x_0^3 + x_1^2 x_2 + x_2^2 x_3 =0$ & \begin{tabular}{c}
     $\langle G, \mathbb{G}_a^2 \rtimes \mathbb{G}_m \rangle$  with \\
    $ \bbG_a: [x_0 + \varepsilon x_2: x_1 : x_2: - \varepsilon^3 x_2 + x_3]$ \\
    $ \bbG_a: [x_0: x_1 + \varepsilon x_2 : x_2: \varepsilon^3 x_1 - \varepsilon^2 x_2 + x_3]$ \\
    $ \bbG_m: [x_0: \lambda x_1 : x_2:  \lambda^{-2} x_2:  \lambda^4 x_3]$ \\
    and $G$ non-commutative, $|G| = 27$, acting as \\
    $[x_0 + \varepsilon_1 x_1 + \varepsilon_2 x_3: x_1 + \varepsilon_1^3 x_3: - \varepsilon_1^3 x_1 + x_2 + \varepsilon_1^6 x_3: x_3]$ \\ where $\varepsilon_1^9 = \varepsilon_2^3 = 0 $
     \end{tabular}
     \\ \cline{2-4} 
     
      &  $E_6^1$ & $x_0^3 + x_1^3 + x_0x_1x_2 + x_2^2x_3 =0$ & 
     \begin{tabular}{c}
     $\langle \mu_3, \bbG_a^2 \rangle$ with \\
     $\mu_3:  [\lambda x_0: \lambda^2 x_1 : x_2 : x_3]$ \\
     $\bbG_a: [x_0 - \varepsilon x_2: x_1:x_2: \varepsilon x_1 + \varepsilon^3 x_2 + x_3]$ \\
     $\bbG_a: [x_0: x_1 - \varepsilon x_2:x_2: \varepsilon x_0 + \varepsilon^3 x_2 + x_3]$
     \end{tabular}
    \\ \hline
     
    \end{tabular}
    \caption{Non-equivariant RDP del Pezzo surfaces of degree $3$ with vector fields in characteristic $3$}
    \label{Table Eqn and Aut - char 3, deg 3}
\end{table}

\restoregeometry
\newpage

\newgeometry{top=27 mm, bottom=20 mm, left=18 mm, right=18mm}

\begin{table}[h!]
    \centering
    \begin{tabular}{|c||c|c|c|} \hline 
   $d$ &  RDPs & equation(s) of $X$    &  $\Aut_X^{0}$ \\ \hline \hline 
  \multirow{29}{*}{$2$} & $A_2 + 3A_1$ & $w^2 = z(xy(x+y) + z^3)$ & $\mu_3:  [x:y: \lambda z: \lambda^{-1} w]$
       \\ \cline{2-4}
       
       & $A_2 + A_3$ & 
       $\begin{array}{c}
          w^2 = x^4 + a^3 x^2yz + xy^3 + y^2z^2 \\
            \text{ with } a^2 \neq 1
          \end{array}$
        & $\mu_3:  [x:\lambda y: \lambda^{-1} z: w]$
       \\ \cline{2-4}
       
        & $A_2 + A_3 + A_1$ & 
       $\begin{array}{c}
          w^2 = x^2yz + xy^3 + y^2z^2
          \end{array}$
        & $\mu_3:  [x:\lambda y: \lambda^{-1} z: w]$
       \\ \cline{2-4}
       
       & $A_2 + A_4$ & 
       $\begin{array}{c}
          w^2 = (xz+y^2)^2 + y^3z
          \end{array}$
        & $\alpha_3:  [x + \varepsilon y - \varepsilon^2 z:y + \varepsilon z: z: w]$
       \\ \cline{2-4}
       
        & $2A_2$ & 
       $\begin{array}{c}
          w^2 = x^4 + xy^3 + xz^3 + ax^2yz + by^2z^2 \\
           \text{ with } (b^3 - a^2b^2)^2 \neq a^3b^3, b \neq 0
          \end{array}$
        & $\mu_3:  [x:\lambda y: \lambda^{-1} z: w]$
       \\ \cline{2-4}
       
       & $2A_2$ & 
        $\begin{array}{c}
        w^2 = (xz + y^2)^2 + x^3z + a^6z^4  \\
           \text{ with } a \neq 0
        \end{array}
        $ 
        & $\alpha_3:  [x + \varepsilon y - \varepsilon^2 z:y + \varepsilon z: z: w]$
       \\ \cline{2-4}

       & $2A_2 + A_1$ & 
       $\begin{array}{c}
          w^2 = ax^2yz + xy^3 + xz^3 + y^2z^2 \\
           \text{ with } a \neq 0,1
          \end{array}$
        & $\mu_3:  [x:\lambda y: \lambda^{-1} z: w]$
       \\ \cline{2-4} 
       
           & $3A_2$ 
    & $w^2= y^4 + x^2y^2 + xz^3$
    & 
          $\alpha_3^2 \rtimes \mu_3: [x:y:\varepsilon_1 x + \varepsilon_2 y + \lambda z:w]$
    \\ \cline{2-4}

       & $A_5$ & 
       $\begin{array}{c}
          w^2 = x^4 + xy^3 + xz^3 + ax^2yz + by^2z^2 \\
           \text{ with } (b^3  - a^2b^2)^2 = a^3b^3, b \neq 0
          \end{array}$
        & $\mu_3:  [x:\lambda y: \lambda^{-1} z: w]$
       \\ \cline{2-4}
       
       & $A_5$ & 
       $\begin{array}{c}
          w^2 = x^4 + ax^2yz + xy^3 + xz^3 \\
           \text{ with } a \neq 0
          \end{array}$
        & $\mu_3:  [x:\lambda y: \lambda^{-1} z: w]$
       \\ \cline{2-4}
       
        & $A_5$ & 
        $w^2 = (xz + y^2)^2 + x^3z$ 
        & $\alpha_3:  [x + \varepsilon y - \varepsilon^2 z:y + \varepsilon z: z: w]$
       \\ \cline{2-4}
       
       & $A_5$ & 
        $w^2 = z(z(xz + y^2) + x^3)$ 
        & $\alpha_3:  [x + \varepsilon y - \varepsilon^2 z:y + \varepsilon z: z: w]$
       \\ \cline{2-4}
       
       & $A_5 + A_1$ & 
       $\begin{array}{c}
          w^2 = x^2yz + xy^3 + xz^3
          \end{array}$
        & $\mu_3:  [x:\lambda y: \lambda^{-1} z: w]$
       \\ \cline{2-4}
       
       & $A_5 + A_1$ & 
       $\begin{array}{c}
          w^2 = x^2yz + xy^3 + xz^3 + y^2z^2
          \end{array}$
        & $\mu_3:  [x:\lambda y: \lambda^{-1} z: w]$
       \\ \cline{2-4}

        & $A_5 + A_2$ 
    & $w^2 = x^2y^2 + xz^3$
    & 
    $
    \alpha_3^2 \rtimes \mathbb{G}_m: [x:\lambda^3 y: \varepsilon_1 x + \varepsilon_2 y + \lambda^2 z: \lambda^3 w] 
    $
      \\ \cline{2-4}
      
            & $E_6^0 $ & $w^2 = y^4 + xz^3$ 
    & 
     \begin{tabular}{c}
     $\langle G, \mathbb{G}_m \rangle$ with \\
     $\bbG_m: [x:\lambda^3 y: \lambda^4z: \lambda^6w]$ \\
     and $G$ non-commutative, $|G| = 27$, acting as \\
     $[x : y - \varepsilon_1^3x : \varepsilon_2 x + \varepsilon_1 y + z: w]$ 
     \\ where $\varepsilon_1^9 = \varepsilon_2^3 = 0 $
     \end{tabular}
   \\ \cline{2-4}

     & $E_6^1 $ & $w^2 = (y^3 + z^3)x + y^2z^2 $
    &
    $\mu_3:  [x: \lambda y: \lambda^{-1} z:w]$

    \\ \cline{2-4}
    
         &  $E_7^0$ & $w^2 = x^3y + xz^3$ & 
         
          \begin{tabular}{c}
    $\langle \alpha_3,\bbG_a \rtimes \bbG_m \rangle$ with \\
    $ \alpha_3:  [x:y: z + \varepsilon y: w]$ \\
    $ \bbG_a: [x: y + \varepsilon^3 x : z - \varepsilon x: w]$ \\
    $ \bbG_m: [x:\lambda^6 y: \lambda^2 z: \lambda^3 w]$
    \end{tabular}
       \\
         \hline

         \end{tabular}
    \caption{Non-equivariant RDP del Pezzo surfaces of degree $2$ with vector fields in characteristic $3$}
    \label{Table Eqn and Aut - char 3, deg 2}
\end{table}

  \restoregeometry
\newpage

 \newgeometry{top=27 mm, bottom=20 mm, left=16 mm, right=16mm}

\begin{table}[h!]
    \centering
    \begin{tabular}{|c||c|c|c|} \hline 
   $d$ &  RDPs & equation(s) of $X$    &  $\Aut_X^{0}$ \\ \hline \hline 
  \multirow{37}{*}{$1$} & $A_2 + D_4$ &
  $\begin{array}{c }
    y^2 = x^3 + stx^2 + a^3s^6 + s^3t^3 \\
   \text{ with } a \neq 0
   \end{array}$
   & $\mu_3:  [\lambda s : \lambda^{-1} t : x : y]$
         \\ \cline{2-4}
         
            & $A_2 + D_4$ &
   $\begin{array}{c }
   y^2 =  x^3 + s^2t^2x + t^6
   \end{array}$
   & $\mu_3: [\lambda s : \lambda^{-1} t : x : y]$
        \\ \cline{2-4}
         
          & $2A_2$ &
  $\begin{array}{c }
    y^2 = x^3 + stx^2 + a^3s^6 + b^3s^3t^3 + t^6 \\
   \text{ with } a \neq 0, b^2 \neq a
   \end{array}$
   & $\mu_3:  [\lambda s : \lambda^{-1} t : x : y]$
         \\ \cline{2-4}

    & $2A_2$ &
   $\begin{array}{c }
     y^2 =  x^3 + s^2t^2x + a^3s^6 + t^6 \\
   \text{ with } a \neq 0
   \end{array}$
   & $\mu_3: [\lambda s : \lambda^{-1} t : x : y]$
        \\ \cline{2-4}

                 & $3A_2$ &
  $\begin{array}{c }
    y^2 = x^3 + s^2x^2 + st^3x + a^3s^3t^3 + b^3t^6\\
     \text{ with } a \not \in \{0,(b-1)^2\}, b \neq 0
   \end{array}$
   & $\alpha_3 \rtimes \mu_3:  [s : \varepsilon s + \lambda t : x : y]$
         \\ \cline{2-4}

           & $3A_2$ &
  $\begin{array}{c }
    y^2 = x^3 + s^2x^2 + a^3s^3t^3 + t^6\\
     \text{ with } a \neq 0
   \end{array}$
   & $\alpha_3 \rtimes \mu_3:  [s : \varepsilon s + \lambda t : x : y]$
         \\ \cline{2-4}
         
         & $3A_2 + A_1$ &
  $\begin{array}{c }
    y^2 = x^3 + s^2x^2 + st^3x + a^3s^3t^3\\
     \text{ with } a \not \in \{0,1\}
   \end{array}$
   & $\alpha_3 \rtimes \mu_3:  [s : \varepsilon s + \lambda t : x : y]$
         \\ \cline{2-4}
         
                  & $4A_2$ &
  $\begin{array}{c }
    y^2 = x^3 + s^4t^2 + s^2t^4
   \end{array}$
   & \begin{tabular}{c}
     $\alpha_3^3 \rtimes \mu_3: [\lambda s: \lambda t: x + \varepsilon_1 s^2 + \varepsilon_2 st + \varepsilon_3 t^2: y]$
     \end{tabular}
         \\ \cline{2-4}
         
           & $A_5$ &
  $\begin{array}{c }
    y^2 = x^3 + stx^2 + b^6s^6 + b^3s^3t^3 + t^6 \\
   \text{ with } b \neq 0
   \end{array}$
   & $\mu_3:  [\lambda s : \lambda^{-1} t : x : y]$
         \\ \cline{2-4}
         
          & $A_5 + A_2$ &
  $\begin{array}{c }
    y^2 = x^3 + s^2x^2 + st^3x + b^3t^6\\
     \text{ with } b \neq 0,1
   \end{array}$
   & $\alpha_3 \rtimes \mu_3:  [s : \varepsilon s + \lambda t : x : y]$
         \\ \cline{2-4}
         
          & $A_5 + A_2$ &
  $\begin{array}{c }
    y^2 = x^3 + s^2x^2 + t^6
   \end{array}$
   & $\alpha_3 \rtimes \mu_3:  [s : \varepsilon s + \lambda t : x : y]$
         \\ \cline{2-4}
         
           & $A_5 + A_2 + A_1$ &
  $\begin{array}{c }
    y^2 = x^3 + s^2x^2 + st^3x
   \end{array}$
   & $\alpha_3 \rtimes \mu_3:  [s : \varepsilon s + \lambda t : x : y]$
         \\ \cline{2-4}

              & $E_6^0$ &
  $\begin{array}{c }
    y^2 = x^3 + st^3x + as^3t^3 + t^6 \\
    \text{ with } a \neq 0
   \end{array}$
   & $\alpha_3 \rtimes \mu_3:  [s : \varepsilon s + \lambda t : x : y]$
         \\ \cline{2-4}
         
         & $E_6^0$ &
  $\begin{array}{c }
    y^2 = x^3 + s^4x + t^6
   \end{array}$
   & $\mu_3:  [s : \lambda t : x : y]$
         \\ \cline{2-4}
         
         & $E_6^0 + A_1$ &
  $\begin{array}{c }
    y^2 = x^3 + st^3x + s^3t^3
   \end{array}$
   & $\alpha_3 \rtimes \mu_9:  [\lambda^6 s : \varepsilon s + \lambda t : x + (1 - \lambda^3)s^2: y]$
         \\ \cline{2-4}
         
                & $E_6^0 + A_2$ &
  $\begin{array}{c }
    y^2 = x^3 + s^4t^2
   \end{array}$
   & \begin{tabular}{c}
     $\langle G, \bbG_m \rangle$  with \\
    $ \bbG_m: [\lambda s: \lambda^{-2} t: x : y]$ \\
    and $G$ non-commutative, $|G| = 81$, acting as \\
    $ \bbG_m: [s : t - \varepsilon_2^3 s: x + \varepsilon_1 s^2 + \varepsilon_2 st + \varepsilon_3 t^2: y] $ \\
    with $\varepsilon_1^3 = \varepsilon_2^9 = \varepsilon_3^3 = 0$
     \end{tabular}
         \\ \cline{2-4}
         
          & $E_6^1 + A_2$ &
  $\begin{array}{c }
    y^2 = x^3 + s^2x^2 + s^3t^3
   \end{array}$
   & $\alpha_3 \rtimes \mu_3:  [s : \varepsilon s + \lambda t : x : y]$
         \\ \cline{2-4}
         
          & $E_7^0$ &
  $\begin{array}{c }
    y^2 = x^3 + st^3x + t^6
   \end{array}$
   & \begin{tabular}{c}
     $\alpha_3 \rtimes \mu_3: [s : \varepsilon s + \lambda t : x : y]$
     \end{tabular}
         \\ \cline{2-4}
         
          & $E_7^0 + A_1$ &
  $\begin{array}{c }
    y^2 = x^3 + st^3x
   \end{array}$
   & \begin{tabular}{c}
$\alpha_3 \rtimes \bbG_m: [\lambda^{-3} s : \varepsilon s + \lambda t : x : y]$
     \end{tabular}
         \\ \cline{2-4}

            & $A_8$ &
  $\begin{array}{c }
    y^2 = x^3 + s^2x^2 + st^3x + t^6
   \end{array}$
   & $\alpha_9 \rtimes \mu_3:  [s : \varepsilon s + \lambda t : x + \varepsilon^3 s^2 : y]$
         \\ \cline{2-4}
         
         & $E_8^0$ &
  $\begin{array}{c }
    y^2 = x^3 + s^5t
   \end{array}$
   & \begin{tabular}{c}
     $\langle \alpha_3^2, \bbG_a \rtimes \bbG_m \rangle$  with \\
    $ \bbG_a: [s:t - a^3s: x + as^2:y]$ \\
    $ \bbG_m: [\lambda s : \lambda^{-5} t: x: y] $ \\
     $ \alpha_3^2: [s: t:x + \varepsilon_1 st + \varepsilon_2 t^2:y]$
     \end{tabular}
         \\ \cline{2-4}
         
          & $E_8^1$ &
  $\begin{array}{c }
    y^2 = x^3 + s^4x + s^3t^3
   \end{array}$
   & \begin{tabular}{c}
     $\bbG_a \rtimes \mu_3$  with \\
    $ \bbG_a: [s:t - (a ^3 +a)s:x + as^2:y]$ \\
     $ \mu_3 : [s: \lambda t:x:y]$
     \end{tabular}
         \\ \hline
    \end{tabular}    
    \caption{Non-equivariant RDP del Pezzo surfaces of degree $1$ with vector fields in characteristic $3$}
    \label{Table Eqn and Aut - char 3, deg 1}
\end{table}

\restoregeometry

\clearpage

\bibliographystyle{alpha} 
\bibliography{RDP_delPezzo_surfaces_with_global_vector_fields}

\end{document}